\newcommand{\cC}{{\mathcal C}}
\newcommand{\cL}{{\mathcal L}}
\newcommand{\F}{{\mathbb F}}
\newcommand{\Fq}{{\F_q}}
\newcommand{\Fqn}{\F_{q^n}}
\newcommand{\Fqt}{{\F_{q^t}}}
\newcommand{\im}{\textnormal{im} }
\newtheorem{theorem}{Theorem}[section]
\newtheorem{lemma}[theorem]{Lemma}
\newtheorem{corollary}[theorem]{Corollary}
\newtheorem{proposition}[theorem]{Proposition}
\DeclareMathOperator{\PG}{{PG}}
\DeclareMathOperator{\Aut}{{Aut}}
\DeclareMathOperator{\GL}{{GL}}
\DeclareMathOperator{\PGL}{{PGL}}
\DeclareMathOperator{\PGaL}{P\Gamma L}
\DeclareMathOperator{\GaL}{\Gamma L}
\DeclareMathOperator{\rk}{rk}
\DeclareMathOperator{\End}{End}
\theoremstyle{definition}
\newtheorem{remark}[theorem]{Remark}
\definecolor{byzantium}{rgb}{0.44, 0.16, 0.39}
\def\zhou#1 {\fbox {\footnote {\ }}\ \footnotetext { From Yue: {\color{byzantium}#1}}}
\def\giuseppe#1 {\fbox {\footnote {\ }}\ \footnotetext { From Giuseppe: {\color{blue}#1}}}
\def\giovanni#1 {\fbox {\footnote {\ }}\ \footnotetext { From Giovanni: {\color{blue}#1}}}
\title[A  large family of maximum scattered linear sets and MRD codes]{A  large family of maximum scattered linear sets of $\PG(1,q^n)$ and their associated MRD codes}
\author[G. Longobardi]{G. Longobardi\textsuperscript{\,1}}
	\address{\textsuperscript{1}Dipartimento di Tecnica e Gestione dei Sistemi Industriali\\
	Universit\`a degli Studi di Padova\\
	Stradella S. Nicola, 3\\
	36100 Vicenza, Italy}
\email{giovanni.longobardi@unipd.it}
\author[G.\ Marino]{Giuseppe Marino\textsuperscript{\,2}}
\address{\textsuperscript{2}Dipartimento di Matematica e Applicazioni ``Renato Caccioppoli''\\
	Universit\`a degli Studi di Napoli ``Federico II''\\
	Via Vicinale Cupa Cintia, 80126 Napoli, Italy}
\email{giuseppe.marino@unina.it, rtrombet@unina.it}
\author[R.\ Trombetti]{Rocco Trombetti\textsuperscript{\,2}}
\author[Y.\ Zhou]{Yue Zhou\textsuperscript{\,3}}
\address{\textsuperscript{3}College of Liberal Arts and Sciences, National University of Defense Technology, 410073 Changsha, China}
\email{yue.zhou.ovgu@gmail.com}
\begin{document}
\maketitle
\begin{abstract}
	The concept of linear set in projective spaces over finite fields was introduced by Lunardon \cite{lunardon_normal_1999} and it plays central roles in the study of blocking sets, semifields, rank-metric codes and etc. A linear set with the largest possible cardinality and rank is called maximum scattered. Despite two decades of study, there are only a limited number of maximum scattered linear sets of a line $\PG(1,q^n)$. In this paper, we provide a large family of new maximum scattered linear sets over $\PG(1,q^n)$ for any even $n\geq 6$ and odd $q$. In particular, the relevant family contains at least 
	\[ \begin{cases}
	\left\lfloor\frac{q^t+1}{8rt}\right\rfloor,& \text{ if }t\not\equiv 2\pmod{4};\\[8pt]
	\left\lfloor\frac{q^t+1}{4rt(q^2+1)}\right\rfloor,& \text{ if }t\equiv 2\pmod{4},
	\end{cases} \]
	inequivalent members for given $q=p^r$ and $n=2t>8$, where $p=\mathrm{char}(\F_q)$. This is a great improvement of previous results: for given $q$ and $n>8$, the number of inequivalent maximum scattered linear sets of $\PG(1,q^n)$ in all classes known so far, is smaller than $q^2\phi(n)/2$, where $\phi$ denotes Euler's totient function.	Moreover, we show that there are a large number of new maximum rank-distance codes arising from the constructed linear sets.

\end{abstract}

\section{Introduction}
Let $V$ be a vector space over $\F_{q^n}$ of dimension $r$ and $\Omega=\PG(V,q^n)=\PG(r-1,q^n)$. A set of points $L_U$ of $\Omega$  is called an $\F_q$-\emph{linear set} of \emph{rank} $k$ if it consists of the points defined by the non-zero elements of an $\F_q$-subspace $U$ of $V$ of dimension $k$, that is,
\[ L_U=\left\{\langle \mathbf{u} \rangle_{\F_{q^n}} :   \mathbf{u}\in U\setminus \{ \mathbf{0} \} \right\}. \]
The term \emph{linear} was introduced by Lunardon \cite{lunardon_normal_1999} who considered a special kind of blocking sets. In the pasting two decades after this work, linear sets have been intensively investigated and applied to construct and characterize various objects in finite geometry, such as blocking sets, two-intersection sets, translation spreads of the Cayley generalized Hexagon, translation ovoids of polar spaces, semifields and rank-metric codes. We refer to \cite{bartoli_maximum_2018,lavrauw_field_reduction_2015,polverino_linear_2010,polverino_connections_2020,sheekey_new_2016} and the references therein.

The most interesting linear sets are those satisfying certain extremal properties. Firstly, it is clear that $|L_U|\leq \frac{q^k-1}{q-1}$. When the equality is achieved, $L$ is called \emph{scattered}.  A scattered linear set $L_U$ of $\Omega$ with largest possible rank $k$ is called a \emph{maximum scattered linear set}.  In \cite{blokhuis_scattered_2000}, it is proved that the largest possible rank is $k=rn/2$ if $r$ is even, and $(rn-n)/2\leq k\leq rn/2$ if $r$ is odd. In particular, when $r=2$, i.e.,  $L_U$ is a maximum scattered linear set over a projective line, its rank $k$ equals $n$.

For a given linear set $L_U$ of rank $n$ of a projective line, by a suitable collineation of $\PG(1,q^n)$, we may always assume that the point $\langle (0,1)\rangle_{\F_{q^n}}$ is not in $L_U$. This means
\[ U=U_f :=\{(x,f(x)): x\in \F_{q^n}  \}, \]
for a {\it $q$-polynomial} $f(x)$ over $\F_{q^n}$;i.e, an element of the set
\[{\mathcal L}_{n,q}[x]=\left\{\sum_{i=0}^{n-1} c_i x^{q^i}\colon c_i\in \F_{q^n} \right\}.\] Since polynomials in this set define $\F_q$-linear maps of $\F_{q^n}$ seen as $\F_q$-vector space, they are also known in the literature as \emph{linearized polynomials}.
Given a $q$-polynomial $f$, we use $L_f$ to denote the linear set defined by $U_f$. It is not difficult to show that $L_f$ is scattered if and only if for any $z,y\in \F_{q^n}^*$ the condition
\[\frac{f(z)}{z} = \frac{f(y)}{y}\]
implies that $z$ and $y$ are $\F_q$-linearly dependent. Hence, a $q$-polynomials satisfying this condition is usually called a \emph{scattered polynomial} (over $\F_{q^n}$). The condition for a $q$-polynomial $f(x)$ to be scattered can be rephrased by saying that if $f(\gamma x)=\gamma f(x)$, for $x$ and $\gamma \in \F_{q^n}$ with $x \neq 0$, then $\gamma \in \F_q$.

Scattered polynomials are also strongly related to a topic in network coding theory: the rank-distance codes. More precisely, a \textit{rank-distance code} (or RD code for short) $\cC$ is a subset of the set of $m \times n$ matrices 
$\F^{m \times n}_q$ over
$\Fq$ endowed with the rank distance 
$$d(A, B) = \rk (A - B)$$
for any $A, B  \in \F_q^{m \times n}$. The \textit{minimum distance} of an RD code $\cC$, $|\cC|\geq 2$, is defined as
\begin{equation*}
d(\cC) = \min_{\underset{M \not = N}{M, N \in \cC}} d(M, N) \,.
\end{equation*}
A rank-distance code of $\F_q^{m \times n }$ with minimum distance $d$ has \emph{parameters}
$(m, n, q; d)$. 
If  $\cC$ is an $\F_q$-linear subspace of $\F_q^{m \times n}$, then $\cC$ is called $\F_q$-\textit{linear} 
RD code and its \emph{dimension} $\dim_{\F_q} \cC$ is defined to be the dimension of $\cC$ as 
a subspace over $\F_q$.
The \textit{Singleton-like bound}  \cite{Delsarte} for an $(m,n,q;d)$ RD-code $\cC$ is
$$|\cC| \leq q^{\max\{m,n\}(\min\{m,n\}-d+1)}.$$
If $\cC$ attains this size, then $\cC$ is a called  \textit{Maximum Rank-Distance code}, MRD \textit{code} for short.
In this paper we will consider only the case in which the codewords are square matrices, i.e. $m=n$.
Note that if $n = d$, then an MRD code $\cC$ consists of $q^n$ invertible endomorphisms of $\Fqn$; such $\cC$  is called \textit{spread set} of $\End_{\Fq}(\Fqn )$. 
In particular if $\cC$ 
is $\Fq$-linear, it is called a \textit{semifield spread set} of $\End_{\Fq}(\Fqn )$, see \cite{LavPol}.
Two $\Fq$-linear codes $\cC$ and $\cC'$ are called \textit{equivalent} if
there exist $A,B \in \GL(n, q)$ and a field automorphism $\sigma$ of $\Fq$ such that 
$$\cC' = \{AC^\sigma B \colon C \in \cC\}.$$
The aforementioned link lies in the fact that rank-distance codes can be described by means of $q$-polynomials over $\F_{q^n}$, considered modulo $x^{q^n}-x$. 
After fixing an ordered $\F_q$-basis $\{b_1,b_2,\ldots,b_n\}$ for $\F_{q^n}$ it is possible to give a bijection $\Phi$ which associates for each matrix $M\in \F_q^{n\times n}$ a unique $q$-polynomial $f_M\in {\mathcal L}_{n,q}[x]$. More precisely, put ${\bf b}=(b_1,b_2,\ldots,b_n)\in \F_{q^n}^n$, then $\Phi(M)=f_M$ where for each ${\mathbf u}=(u_1,u_2,\ldots,u_n)\in \F_q^n$ we have $f_M({\mathbf b}\cdot {\mathbf u})={\mathbf b}\cdot {\mathbf u}  M$.

Given a scattered polynomial $f$,  the set of $q$-polynomials 
\[\cC_f=\{ ax+bf(x): a,b\in \F_{q^n} \}\]
defines a linear MRD code of minimum distance $n-1$ over $\F_q$. For recent surveys on MRD codes and their relations with linear sets, we refer to \cite{polverino_connections_2020,sheekey_MRD_survey_arxiv}.

Up to now, there are only three families of maximum scattered linear sets in $\PG(1,q^n)$ for infinitely many $n$. We list the corresponding scattered polynomials over $\F_{q^n}$ below:
\begin{enumerate}[label=(\roman*)]
	\item\label{family:PR} $x^{q^s}$, where $1 \leq s \leq n-1$ and $\gcd(s,n)=1$; see \cite{blokhuis_scattered_2000}.
	\item\label{family:LP} $\delta x^{q^s}+x^{q^{n-s}}$, where $n \geq 4$,\, $N_{q^n/q}(\delta)\notin \{0,1\}$, $\gcd(s,n)=1$ and $N_{q^n/q}\,:\,x \in \F_{q^n} \mapsto x^{\frac{q^n-1}{q-1}} \in \Fq,$ is the norm function of $\F_{q^n}$ over $\F_q$; see \cite{lunardon_blocking_2001,sheekey_new_2016}.
	\item\label{family:LZ} $\psi^{(k)}(x)$,  where $\psi(x)= \frac{1}{2}\left(x^q+x^{q^{t-1}} -x^{q^{t+1}}+x^{q^{2t-1}}\right)$, $q$ odd, $n=2t$ and 
	\begin{itemize}
		\item [-] $t$ is even and $\gcd(k,t)=1$, or
		\item [-] $t$ is odd, $\gcd(k,2t)=1$, and $q\equiv 1\pmod{4}$; see \cite{longobardi_familyOfLinearMRD_2021}. 
	\end{itemize}
\end{enumerate}
For $n\in\{6,8\}$,  there are other  families of  scattered polynomials over $\F_{q^n}$; see \cite{bartoli_new_maximum_2020,csajbok_newMRD_2018,csajbok_new_maximum_2018,marino_mrd-codes_2020,zanella_vertex_2020}. According to the asymptotic classification results of them obtained in \cite{bartoli_classification_2021,bartoli_scattered_2018},  maximum scattered linear sets in $\PG(1,q^n)$ seem rare.

Two linear sets $L_U$ and $L_{U'}$ of $\PG(1,q^n)$ are said to be $\PGaL$-\textit{equivalent} (or  \textit{projectively equivalent}) if there exists $\varphi\in \PGaL(2,q^n)$ such that $L_U^\varphi=L_{U'}$. For two given $q$-polynomials,  it is well-known that $\cC_f$ is equivalent to $\cC_g$ if and only if $U_f$ and $U_g$ are  on the same $\GaL(2,q^n)$-orbit, which further implies that $L_f$ and $L_g$ are $\PGaL$-equivalent. However, the converse statement is not true in general. For instance,  if $U_f=\left\{(x, x^q) :  x\in \F_{q^n} \right\}$ and $U_g=\left\{(x, x^{q^s}) :  x\in \F_{q^n} \right\}$ with $s\neq 1$ and $\gcd(s,n)=1$, then $U_f$ and $U_g$ are not $\GaL(2,q^n)$-equivalent, but obviously $L_f=\left\{\langle (1, x^{q-1})\rangle_{\F_{q^n}} :  x\in \F_{q^n}^* \right\}=L_g$. For more results on the equivalence problems, we refer to \cite{csajbok_classes_2018,csajbok_equivalence_2016}.

\begin{table}
	\centering
	\begin{tabular}{cccc}
		\hline 
		No. &Families & \# inequivalent $\cC_f$  & \# inequivalent  $L_f$ \\
		\hline 
		\ref{family:PR}& Pseudo-regulus & $\phi(n)/2$ & $1$ \\[4pt]
		\ref{family:LP} &Lunardon-Polverino & $\leq\begin{cases}
		{\phi(n)}\frac{q-2}{{2}}, & 2\nmid n,\\[4pt]
		{\phi(n)}\frac{(q+1)(q-2)}{{4}}, & 2\mid n,
		\end{cases}$ &   $\leq\begin{cases}
		{\phi(n)}\frac{q-2}{{2}}, & 2\nmid n,\\[4pt]
		{\phi(n)}\frac{(q+1)(q-2)}{{4}}, & 2\mid n,
		\end{cases}$   \\ [4pt]
		\ref{family:LZ}&Longobardi-Zanella & $\phi(n)/2$ & $\leq \phi(n)/2$ \\
		\hline
	\end{tabular} 
	
\medskip
	\caption{Numbers of inequivalent $\cC_f$ and $\PGaL(2,q^n)$-inequivalent $L_f$, where $f$ is a scattered polynomial in \ref{family:PR}, \ref{family:LP} or \ref{family:LZ}, $q=p^r$ with $p=\mathrm{char}(\F_q)$ and $\phi$ denotes Euler's totient function.}\label{table:numbers}
\end{table}

In Table \ref{table:numbers}, we list the numbers of inequivalent $\cC_f$ and $\PGaL(2,q^n)$-inequivalent $L_f$, for a scattered polynomial $f$ in each one of the three known families. The proof of what is stated in Table \ref{table:numbers} will be provided in Section \ref{sec:pre}.

In this paper, we present a new family of maximum scattered linear sets in $\PG(1,q^n)$ where $q=p^r$, $p$ is an odd prime  and $n=2t\geq 6$; see Theorem \ref{t:main}. In particular, when $t>4$, this new family provides at least 
\[ \begin{cases}
\left\lfloor\frac{q^t+1}{4rt}\right\rfloor,& \text{ if }t\not\equiv 2\pmod{4};\\[8pt]
\left\lfloor\frac{q^t+1}{2rt(q^2+1)}\right\rfloor,& \text{ if }t\equiv 2\pmod{4}
\end{cases} \]
inequivalent number of $\F_{q^n}$-MRD codes (Corollary \ref{coro:BIG}) and  at least 
\[ \begin{cases}
\left\lfloor\frac{q^t+1}{8rt}\right\rfloor,& \text{ if }t\not\equiv 2\pmod{4};\\[8pt]
\left\lfloor\frac{q^t+1}{4rt(q^2+1)}\right\rfloor,& \text{ if }t\equiv 2\pmod{4}
\end{cases} \]
$\PGaL(2,q^n)$-inequivalent maximum linear sets (Theorem \ref{t:PGaL-equivalence}). Therefore, the number of maximum scattered linear sets in $\PG(1,q^n)$ (and hence of $\F_{q^n}$-MRD codes) grows exponentially with respect to $n$.

The remaining part of this paper is organized as follows. In Section \ref{sec:pre}, we introduce more results on the equivalence of maximum scattered linear sets in $\PG(1,q^n)$ and the associated MRD codes, and explain Table \ref{table:numbers} in details. In Section \ref{sec:construction}, we 
exhibit a family of scattered polynomials $f$ over $\F_{q^n}$ and provide its proof. The equivalence between the  MRD codes $\cC_f$ associated to the members of this family are completely determined in Section \ref{sec:MRD}, in which we also study the MRD codes derived from the adjoint maps of our scattered polynomials. Based on these results, the $\PGaL$-equivalence of the associated maximum linear sets are investigated in Section \ref{sec:PGaL_equivalence}. 

\section{Preliminaries}\label{sec:pre}
\subsection{Equivalence of MRD codes and linear sets}

Recall that, given two scattered polynomials $f$ and $g$ over $\F_{q^n}$, the corresponding MRD codes $\cC_f$ and $\cC_g$ are equivalent  if and only if there exist $L_1$, $L_2\in {\mathcal L}_{n,q}[x]$ permuting $\F_{q^n}$ and $\rho\in \Aut(\F_{q^n})$ such that
	\[ L_1\circ \varphi^\rho \circ L_2 \in \cC_g \text{ for all }\varphi\in \cC_f,\]
	where $\circ$ stands for the composition of maps and $\varphi^\rho(x)= \sum a_i^\rho x^{q^i}$ for $\varphi(x)=\sum a_i x^{q^i}$.

The following result concerning the equivalence of MRD codes associated with scattered polynomials  is proved in \cite{sheekey_new_2016}.
\begin{theorem}\label{t:code_equiv=space_equiv}
	Let $f$ and $g$ be two scattered polynomials over $\F_{q^n}$, respectively. The MRD-codes $\cC_f$ and $\cC_g$ are equivalent if and only if $U_f$ and $U_g$ are $\GaL(2,q^n)$-equivalent.
\end{theorem}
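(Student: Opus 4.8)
The plan is to prove Theorem \ref{t:code_equiv=space_equiv} by translating the notion of code equivalence into a statement about the subspaces $U_f$ and $U_g$, using the explicit description of when $\cC_f$ and $\cC_g$ are equivalent that was recalled just before the statement. Recall that $\cC_f = \{ax + bf(x) : a,b \in \F_{q^n}\}$, so as an $\F_q$-space of linearized polynomials, $\cC_f$ is spanned over $\F_{q^n}$ by the identity map $x$ and by $f$. The key observation is that the subspace $U_f = \{(x,f(x)) : x \in \F_{q^n}\}$ is nothing but the graph of $f$, and the two maps $x$ and $f(x)$ that generate $\cC_f$ are exactly the two coordinate projections restricted to $U_f$. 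So both objects encode the same data, and the equivalence should follow by carefully matching up the group actions.

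For the direction that space-equivalence implies code-equivalence, I would start with an element $\Psi \in \GaL(2,q^n)$ sending $U_f$ to $U_g$. Writing $\Psi$ as a semilinear map, it is given by a matrix $\begin{pmatrix} \alpha & \beta \\ \gamma & \delta \end{pmatrix} \in \GL(2,q^n)$ composed with a field automorphism $\rho$. Applying $\Psi$ to a generic point $(x, f(x)^\rho)$ of $U_f^\rho$ and demanding that the image lie in $U_g$, I would read off that the first coordinate $\alpha x + \beta f(x)^\rho$ is a permutation $L$ of $\F_{q^n}$ (this uses that $\Psi$ is invertible and $U_g$ is a graph, so the projection to the first coordinate is a bijection), and that $g$ composed with this permutation equals $\gamma x + \delta f(x)^\rho$. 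Reorganizing, this yields precisely linearized polynomials $L_1, L_2$ and an automorphism realizing the code equivalence via $L_1 \circ \varphi^\rho \circ L_2 \in \cC_g$; the span $\{ax + bf(x)\}$ gets carried to $\{a'x + b'g(x)\}$ because the action is $\F_{q^n}$-linear in the coefficients $a,b$.

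Conversely, given linearized permutation polynomials $L_1, L_2$ and $\rho \in \Aut(\F_{q^n})$ witnessing the code equivalence, I would apply the hypothesis to the two generators $x$ and $f(x)$ of $\cC_f$. Their images $L_1 \circ \mathrm{id}^\rho \circ L_2$ and $L_1 \circ f^\rho \circ L_2$ must both lie in $\cC_g$, hence can each be written as $\F_{q^n}$-combinations of $x$ and $g(x)$. Collecting the four resulting coefficients into a $2\times 2$ matrix over $\F_{q^n}$ and noting this matrix must be invertible (since $L_1, L_2$ are permutations, the images still span a $2$-dimensional code), I obtain an element of $\GaL(2,q^n)$; tracing through its action on the graph $U_f$ shows it sends $U_f$ to $U_g$. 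The main obstacle, and the step requiring the most care, is verifying that the matrix assembled from the coefficients is genuinely invertible and that the composition with $L_2$ on the right corresponds correctly to a change of the input variable $x$ (a semilinear substitution) rather than some spurious operation; keeping track of the side on which $L_1$ and $L_2$ act, and how composition of linearized polynomials interacts with the graph description $U_f = \{(x,f(x))\}$, is where the bookkeeping is delicate. I expect the rest to be routine once the dictionary between generators of $\cC_f$ and the two projections of $U_f$ is set up cleanly.
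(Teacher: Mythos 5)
Your forward direction (from $\GaL(2,q^n)$-equivalence of $U_f,U_g$ to equivalence of the codes) is essentially correct: writing the semilinear map as a matrix composed with $\rho$, the polynomial $L(x)=\alpha x+\beta f^\rho(x)$ is an invertible element of $\cC_{f^\rho}$ with $g\circ L=\gamma x+\delta f^\rho(x)$; since the matrix is invertible, $\{L,\,g\circ L\}$ is again an $\F_{q^n}$-basis of $\cC_{f^\rho}$, and because right composition commutes with left scalar multiplication, $\cC_{f^\rho}\circ L^{-1}=\cC_g$. (For the record, the paper does not prove this theorem at all; it quotes it from \cite{sheekey_new_2016}. The closest argument actually carried out in the paper is Lemma \ref{le:full_auto_MRD}.)

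The converse direction, however, has a genuine gap, located exactly at the step you flag as "tracing through its action on the graph". From $L_1\circ L_2=ax+bg(x)$ and $L_1\circ f^\rho\circ L_2=cx+dg(x)$ you assemble $B=\begin{pmatrix}a&b\\ c&d\end{pmatrix}$, but what these identities actually say is
\[
B\begin{pmatrix}x\\ g(x)\end{pmatrix}=\begin{pmatrix}L_1(y)\\ L_1(f^\rho(y))\end{pmatrix},\qquad y=L_2(x),
\]
so that $B(U_g)=\left\{\left(L_1(y),L_1(f^\rho(y))\right):y\in\F_{q^n}\right\}$, i.e.\ the image of $U_{f^\rho}$ under the coordinatewise map $(u,v)\mapsto (L_1(u),L_1(v))$. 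For an arbitrary invertible $L_1\in\cL_{n,q}[x]$ this map is merely $\F_q$-linear; it is \emph{not} an element of $\GaL(2,q^n)$, and $B(U_g)$ equals $U_{f^\rho}$ only if $L_1$ commutes with $f^\rho$. So the matrix you built does not carry $U_f$ to $U_g$, and no choice of semilinear map is exhibited. A related symptom: you only used the hypothesis on the two generators $x$ and $f(x)$, but since left composition by $L_1$ does not commute with left multiplication by $\F_{q^n}$-scalars, membership of those two images in $\cC_g$ does not imply $L_1\circ\varphi^\rho\circ L_2\in\cC_g$ for every $\varphi\in\cC_f$; the two-generator condition alone is genuinely weaker and cannot force the conclusion. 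The missing idea is to first pin down $L_1$: the equivalence conjugates the left idealizer $I_L(\cC_{f^\rho})$, which is the field $\{x\mapsto cx:c\in\F_{q^n}\}$ of scalar multiplications, onto $I_L(\cC_g)$, which is that same field; hence $L_1$ normalizes $\F_{q^n}^*$ inside $\GL(n,q)$, and by Huppert's theorem \cite{huppert_endliche_1967} this normalizer consists of the maps $x\mapsto\alpha x^{q^\ell}$. Thus $L_1(x)=\alpha x^{q^\ell}$; then $(u,v)\mapsto(L_1(u),L_1(v))$ \emph{is} an element of $\GaL(2,q^n)$ (a scalar matrix composed with a Frobenius), $L_1$ can be absorbed into the automorphism and a left scalar, and your matrix argument goes through with $L_1=\mathrm{id}$. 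This normalizer mechanism is precisely what the paper's Lemma \ref{le:full_auto_MRD} carries out for the automorphism-group version of the same statement.
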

For this paper,  we only need the necessary and sufficient conditions in Theorem \ref{t:code_equiv=space_equiv} to interpret the equivalence problem in Section \ref{sec:MRD}. However, in general, the equivalence problem for MRD codes could be more complicated. We refer to the surveys \cite{sheekey_MRD_survey_arxiv,polverino_connections_2020} for its precise definition and related results. See  \cite{couvreur_hardness_arxiv} for the hardness of testing the equivalence between rank-distance codes.
The \textit{left idealizer}  and the \textit{right idealizer} of any given rank-distance code $\cC$ are invariant under equivalence. These two concepts were introduced in \cite{liebhold_automorphism_2016}, and in \cite{lunardon_kernels_2017} in different names. If a rank-distance code $\cC$ is given as a subset of $\cL_{n,q}[x]$, then its left idealizer and right idealizer are defined as
\[I_L(\cC) =\{\varphi\in \cL_{n,q}[x]: \varphi\circ f\in \cC \text{ for all }f\in \cC \}, \]
and
\[I_R(\cC) =\{\varphi\in \cL_{n,q}[x]: f\circ \varphi \in \cC \text{ for all }f\in \cC \}, \]
respectively. When $\cC$ is an MRD-code, it is well known that all nonzero elements in $I_L(\cC)$ and $I_R(\cC)$ are invertible and each of the idealizers must be a subfield of $\F_{q^n}$. 
In particular, if $\cC$ is an $\F_{q^n}$-subspace of $\cL_{n,q}[x]$, then $I_L(\cC)$ is isomorphic to $\F_{q^n}$ and $\cC$ is said to be an \emph{$\F_{q^n}$-MRD code}.
Obviously, for every MRD code $\cC_f$ associated with a scattered polynomial $f$ over $\F_{q^n}$, its left idealizer $I_L(\cC_f)$  is isomorphic to $\F_{q^n}$.

For a $q$-polynomial $f(x)=\sum_{i=0}^{n-1}{a_i}x^{q^i}$ over $\F_{q^n}$, the \emph{adjoint map} $\hat{f}$ of it with respect to the bilinear form $\langle x,y \rangle=\mathrm{Tr}_{q^n/q} (xy)$ is 
\[ \hat{f}(x) = a_0 x+\sum_{n=1}^{n-1} a_i^{q^{n-i}} x^{q^{n-i}} . \]
For a given scattered polynomial $f$ over $\F_{q^n}$, its adjoint $\hat f$ is a scattered polynomial over $\F_{q^n}$ as well, and $U_f$ and $U_{\hat{f}}$ (and hence $\cC_f$ and $\cC_{\hat f}$) are not necessarily equivalent. However, they define exactly the same linear set of $\PG(1,q^n)$; see \cite{bartoli_maximum_2018,csajbok_classes_2018}.

To investigate the $\PGaL$-equivalence among linear sets of a line, we need the following result proved in \cite{csajbok_classes_2018}.
\begin{lemma}\label{le:jcta2018}
	Let $f(x)=\sum_{i=0}^{n-1} \alpha_i x^{q^i}$ and $g(x)=\sum_{i=0}^{n-1} \beta_i x^{q^i}$ be two $q$-polynomials over $\F_{q^n}$ such that $L_f=L_g$. Then $\alpha_0=\beta_0$, and 
	\[ \alpha_k \alpha_{n-k} ^{q^k} = \beta_k \beta_{n-k}^{q^k} \]
	for $k=1,2,\cdots, n-1$, and 
	\[ \alpha_1 \alpha_{k-1}^q\alpha_{n-k}^{q^k}  + \alpha_k \alpha_{n-1}^{q} \alpha_{n-k+1}^{q^k}=   \beta_1 \beta_{k-1}^q\beta_{n-k}^{q^k}  + \beta_k \beta_{n-1}^{q} \beta_{n-k+1}^{q^k},\]
	for $k=2,3,\cdots, n-1$.
\end{lemma}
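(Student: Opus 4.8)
The plan is to characterize each linear set as the zero locus of a single univariate polynomial, to show that the two polynomials attached to $f$ and $g$ coincide, and then to read off the three asserted identities by comparing a few of their top coefficients.

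First I would reduce $L_f=L_g$ to an equality of ``slope sets.'' Since $f$ and $g$ are $q$-polynomials we have $f(0)=g(0)=0$, so $\langle(0,1)\rangle_{\F_{q^n}}$ lies in neither $L_f$ nor $L_g$; hence every point of $L_f$ (resp.\ $L_g$) has the form $\langle(1,t)\rangle_{\F_{q^n}}$ with $t=f(x)/x$ (resp.\ $t=g(x)/x$) for some $x\in\F_{q^n}^*$. Thus $L_f=L_g$ is equivalent to $S_f=S_g$, where $S_f=\{f(x)/x:x\in\F_{q^n}^*\}$. Now $t\in S_f$ if and only if the $\F_q$-linear map $f-t\,\mathrm{id}$ has nontrivial kernel, i.e.\ if and only if its determinant vanishes. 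Writing this determinant via the Dickson (autocirculant) matrix, I set
\[ D_f(t)=\det\bigl(b_{(l-j)\bmod n}^{q^j}\bigr)_{0\le j,l\le n-1}\in\F_{q^n}[t],\qquad b_0=\alpha_0-t,\ b_i=\alpha_i\ (1\le i\le n-1), \]
so that $S_f$ is exactly the root set of $D_f$ in $\F_{q^n}$.

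Next I would prove $D_f=D_g$ as polynomials. Because $t$ occurs only on the diagonal, in the entries $b_0^{q^j}=\alpha_0^{q^j}-t^{q^j}$, the unique term of highest $t$-degree comes from the identity permutation and equals $\prod_{j=0}^{n-1}(\alpha_0^{q^j}-t^{q^j})$; hence $D_f$ has degree $N:=\frac{q^n-1}{q-1}=\sum_{j=0}^{n-1}q^j$ and leading coefficient $(-1)^n$, independent of $f$. Since $f$ is scattered, $|S_f|=|L_f|=N$, so $D_f$ is a degree-$N$ polynomial with $N$ distinct roots in $\F_{q^n}$, whence $D_f(t)=(-1)^n\prod_{t_0\in S_f}(t-t_0)$. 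As $S_f=S_g$ and the leading coefficients agree, $D_f(t)=D_g(t)$ identically.

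It then remains to match coefficients, using $(\alpha_0-t)^{q^j}=\alpha_0^{q^j}-t^{q^j}$ so that every $t$-monomial in the determinant has exponent $\sum_{j\in S}q^j$ for some $S\subseteq\{0,\dots,n-1\}$. Comparing the coefficient of $t^{N-1}$ isolates the identity's subset $\{1,\dots,n-1\}$ and yields $\alpha_0=\beta_0$. Comparing the coefficient of $t^{N-1-q^k}$, whose only contributions come from the identity (omitting $\{0,k\}$, a pure $\alpha_0$-term) and the transposition $(0\,k)$ (giving $\pm\,\alpha_k\alpha_{n-k}^{q^k}$), and cancelling the $\alpha_0$-term via $\alpha_0=\beta_0$, yields $\alpha_k\alpha_{n-k}^{q^k}=\beta_k\beta_{n-k}^{q^k}$. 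Finally, comparing the coefficient of $t^{N-1-q-q^k}$ for $k\ge 2$ brings in, besides identity- and transposition-type terms, the two $3$-cycles $(0\,1\,k)$ and $(0\,k\,1)$, which contribute precisely $\alpha_1\alpha_{k-1}^{q}\alpha_{n-k}^{q^k}$ and $\alpha_k\alpha_{n-1}^{q}\alpha_{n-k+1}^{q^k}$; all the remaining contributions are products already pinned down by $\alpha_0=\beta_0$ and by the second relation (together with its $q$-th power), so cancelling them delivers the third identity.

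The main obstacle is the bookkeeping in this last step: one must verify that no stray permutation contributes to the three targeted coefficients and that every auxiliary contribution is genuinely controlled by $\alpha_0=\beta_0$ and by relation~(2). Both facts rest on the uniqueness of base-$q$ digit expansions, which is exactly what forces the contributing permutations to be only the identity, the relevant transpositions, and the $3$-cycles supported on $\{0,1,k\}$; this is also the place where some care is needed for small $q$.
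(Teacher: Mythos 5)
Your argument has a genuine gap: it proves a strictly weaker statement than the lemma. The lemma is asserted for \emph{arbitrary} $q$-polynomials $f,g$ with $L_f=L_g$ (and this paper does not prove it at all --- it quotes it from \cite{csajbok_classes_2018}, where it is established in that generality), yet your proof hinges on the sentence ``Since $f$ is scattered, $|S_f|=|L_f|=N$.'' Scatteredness is not a hypothesis. This is not a cosmetic omission: it is exactly the point where your strategy lives or dies. If $f$ is not scattered, then $|S_f|<N=\frac{q^n-1}{q-1}$, the degree-$N$ polynomial $D_f$ has multiple roots, and equality of the root \emph{sets} $S_f=S_g$ no longer forces the polynomial identity $D_f=D_g$: one would also need the multiplicities to agree, and these are governed by the weights of the points, i.e.\ by data attached to the subspaces $U_f$, $U_g$ rather than to the common point set. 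The fact that the same point set can be supported by subspaces with genuinely different behaviour is precisely the subtlety studied in \cite{csajbok_classes_2018}, so it cannot be waved away; the proof there proceeds differently (from $L_f\subseteq L_g$ one gets, for every $x\in\F_{q^n}$, the singularity of the Dickson matrix of the $q$-polynomial $\lambda\mapsto g(\lambda x)-\lambda f(x)$, and the identical vanishing of that determinant in $x$ yields coefficient identities with no assumption on $f$ or $g$).

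That said, everything else in your proposal checks out, and in the scattered case the argument is complete and correct: $D_f(t_0)=0$ iff $f-t_0\,\mathrm{id}$ is singular; the $t$-degree is $N$ with leading coefficient $(-1)^n$; distinct subsets $S\subseteq\{0,\dots,n-1\}$ give distinct exponents $\sum_{j\in S}q^j$ for every $q\geq 2$ (digits are $0$ or $1$, so no extra care is needed for small $q$); and the only permutations contributing to the three targeted coefficients are the identity, the transposition $(0\,k)$, and the permutations of $\{0,1,k\}$, whose auxiliary terms cancel using $\alpha_0=\beta_0$, relation (2) at indices $1$ and $k$, and the $q$-th power of relation (2) at index $k-1$. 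Moreover, if one of $f,g$ is scattered, so is the other (the common linear set has maximum size $N$ and rank $n$), and in fact this paper only ever applies the lemma to such polynomials ($\psi_{h,t}$ and polynomials defining the same maximum scattered linear set), so your version would suffice for the applications in Section \ref{sec:PGaL_equivalence}. But as a proof of the lemma as stated, the unjustified appeal to scatteredness is a real hole, and patching it within your framework would require showing that $L_f=L_g$ forces equal root multiplicities of $D_f$ and $D_g$ --- which is essentially the content one is trying to avoid assuming.
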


\subsection{Details of Table \ref{table:numbers}}
In the following  we provide details on the estimates stated in  Table \ref{table:numbers}. The number of inequivalent MRD codes defined in (i) comes from the well-known results on the equivalence of Delsarte-Gabidulin codes and their generalizations; see \cite[Theorem 4.4]{lunardon_generalized_2018} for instance. Moreover all monomials  $f(x)=x^{q^s}$ with $\gcd(s,n)=1$, determine the same linear set in $\PG(1,q^n)$; in fact,  
\[L_f:=\left\{\langle (1, x^{q^{s}-1}) \rangle_{\Fqn} : x\in \F_{q^n}^*  \right\};\] the so-called linear set of pseudo-regulus type.

For the equivalence of MRD codes defined by \ref{family:LP}, we need the following result which is a special case of \cite[Theorem 4.4]{lunardon_generalized_2018}.
\begin{proposition}\label{le:LTZ_2018}
	For $\theta,\eta \in \F_{q^n}$ such that $N_{q^n/q}(\theta), N_{q^n/q}(\eta)\notin \{0,1\}$, with $1\leq s,t \leq \frac{n-1}{2}$ satisfying $\gcd(s,n)=1$, let $f(x) = \eta x^{q^s}+x^{q^{n-s}}$ and $g(x)=\theta x^{q^t}+x^{q^{n-t}}$. Then $\cC_f$ and $\cC_g$ are equivalent if and only if $s=t$ and 
	$$\theta=\eta^{\tau} z^{q^{2s}-1}$$
	for some $\tau\in \Aut(\F_{q^n})$ and $z\in \F_{q^n}^*$.
\end{proposition}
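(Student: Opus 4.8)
The plan is to reduce the equivalence of the Lunardon--Polverino MRD codes to the $\GaL(2,q^n)$-equivalence of their associated subspaces $U_f$ and $U_g$, invoking Theorem \ref{t:code_equiv=space_equiv}, and then to unwind what such an equivalence means concretely. First I would write out the condition that $U_f$ and $U_g$ lie on the same $\GaL(2,q^n)$-orbit. Since $U_f=\{(x,f(x)):x\in\F_{q^n}\}$ is a graph, an element of $\GaL(2,q^n)$ sending $U_f$ to $U_g$ can be described by a $2\times 2$ matrix over $\F_{q^n}$ together with a field automorphism $\sigma\in\Aut(\F_{q^n})$; the image graph condition then forces a functional equation relating $g$, the Frobenius-twist $f^\sigma$, and the four matrix entries. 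Because both $f$ and $g$ are binomials of the specific shape $\eta x^{q^s}+x^{q^{n-s}}$ and $\theta x^{q^t}+x^{q^{n-t}}$, the resulting identity is an equality of $q$-polynomials modulo $x^{q^n}-x$, so I can compare coefficients exponent by exponent.

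The key structural step is to pin down which exponents can appear. The left idealizer $I_L(\cC_f)\cong\F_{q^n}$ is an equivalence invariant, which already restricts the admissible transformations: the matrix realizing the equivalence must respect the $\F_{q^n}$-linear structure on the left, so one of the two diagonal actions is essentially scalar multiplication by an element of $\F_{q^n}$ on the output, while the right action combines an $\F_{q^n}$-scalar on the input with the automorphism $\sigma$. Feeding $x\mapsto cx$ on the input and multiplying the output by $d$, the binomial $\eta x^{q^s}+x^{q^{n-s}}$ transforms into $d\eta^{} c^{q^s}x^{q^s}+d c^{q^{n-s}}x^{q^{n-s}}$ after absorbing $\sigma$; matching the support $\{q^t,q^{n-t}\}$ of $g$ against $\{q^s,q^{n-s}\}$ of the transformed $f$ forces $s=t$ (here one uses $1\le s,t\le (n-1)/2$ and $\gcd(s,n)=1$ to avoid the two exponents collapsing or being permuted ambiguously). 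Once $s=t$ is established, comparing the two surviving coefficients yields two scalar equations of the form $\theta=d\,\eta^{\sigma}c^{q^s}$ (suitably normalized) and $1=d\,c^{q^{n-s}}$, from which I eliminate $d$ to get $\theta=\eta^{\sigma}c^{q^s-q^{n-s}}$, and after renaming and using $N_{q^n/q}$-invariance this is exactly $\theta=\eta^{\tau}z^{q^{2s}-1}$ for appropriate $\tau\in\Aut(\F_{q^n})$ and $z\in\F_{q^n}^*$.

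For the converse direction I would simply exhibit the equivalence explicitly: given $\theta=\eta^{\tau}z^{q^{2s}-1}$, I construct the matrix and automorphism realizing $U_f\mapsto U_g$ by reversing the computation above, choosing $c$ and $d$ in terms of $z$ and the norm condition, and verify directly that the functional equation holds, which is a routine check. The main obstacle I anticipate is the \emph{exponent-matching} step: I must rule out that $\sigma$ (a power of the Frobenius $x\mapsto x^{q^j}$) together with the possible swap of the two monomials conspires to give spurious solutions with $s\neq t$. This is where the hypotheses $\gcd(s,n)=1$ and the normalized range $1\le s,t\le (n-1)/2$ are essential, since they force the exponent set of each binomial to determine $s$ uniquely up to the $q^i\leftrightarrow q^{n-i}$ symmetry, and the norm conditions $N_{q^n/q}(\eta),N_{q^n/q}(\theta)\notin\{0,1\}$ guarantee both monomials genuinely appear so that no degeneration occurs. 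Carefully tracking how $\sigma$ acts on the exponents (shifting them cyclically mod $n$) and showing the only consistent alignment of supports gives $s=t$ is the delicate bookkeeping that carries the proof.
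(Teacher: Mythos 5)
The paper does not actually prove this proposition: it is imported wholesale as a special case of \cite[Theorem 4.4]{lunardon_generalized_2018}. So your attempt is necessarily a reconstruction, and the question is whether it would stand on its own. The reduction via Theorem \ref{t:code_equiv=space_equiv} to $\GaL(2,q^n)$-equivalence of $U_f$ and $U_g$ is the right starting point, and your computation in the ``diagonal'' case is correct, as is the sufficiency direction (which indeed only needs diagonal maps: from $\theta=\eta^\tau z^{q^{2s}-1}$ one reads off $c,d$ directly). The gap is the step where you claim that the invariance of the left idealizer forces the equivalence to be diagonal, i.e.\ of the form $(x,f(x))\mapsto(cx^{\sigma},df(x)^{\sigma})$. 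That inference is not valid. The idealizer argument (as in Lemma \ref{le:full_auto_MRD}) only constrains the \emph{outer} map $L_1$ to be monomial, $L_1=\alpha x^{q^{\ell}}$; the inner map $L_2$ remains an arbitrary invertible linearized polynomial, and in the $U$-space picture this means the equivalence is realized by a full invertible matrix $\begin{pmatrix} a & b\\ c & d\end{pmatrix}$ over $\F_{q^n}$, with no a priori reason for $b=0$. The functional equation one must analyze is
\begin{equation*}
g\bigl(ax+bf^{\sigma}(x)\bigr)=cx+df^{\sigma}(x)\qquad\text{for all }x\in\F_{q^n},
\end{equation*}
and ruling out $b\neq 0$ is precisely the substantive content of the theorem, not a formality.

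Concretely, when $b\neq 0$ the left-hand side contains the four monomials with exponents $q^{s+t},q^{t-s},q^{s-t},q^{n-s-t}$ (read mod $n$), whose coefficients $\theta b^{q^t}\eta^{\sigma q^t}$, $\theta b^{q^t}$, $b^{q^{n-t}}\eta^{\sigma q^{n-t}}$, $b^{q^{n-t}}$ are all nonzero; one must show they cannot be matched against the at most three exponents $q^{0},q^{s},q^{n-s}$ available on the right, nor cancel among themselves, and this requires treating the collision cases (e.g.\ $s+t\equiv -(s+t)$, $s\equiv 2t$, $s=t$ making $t-s\equiv s-t\equiv 0$) one by one. This is exactly the case analysis that the paper itself performs for its own family in Theorem \ref{t:code_equiv} (separate cases $b=0$ and $b\neq 0$, with coefficient comparison at $x^{q^2},x^{q^{t\pm 2}},\dots$), and that \cite[Theorem 4.4]{lunardon_generalized_2018} performs for twisted Gabidulin codes. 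By contrast, the step you flag as the delicate one --- Frobenius shifts and the possible swap $q^{i}\leftrightarrow q^{n-i}$ of the two monomials inside the diagonal case --- is the easy part: the normalization $1\leq s,t\leq\frac{n-1}{2}$ disposes of it immediately. One small additional correction: the hypotheses $N_{q^n/q}(\eta),N_{q^n/q}(\theta)\notin\{0,1\}$ are not there to ensure ``both monomials genuinely appear'' (that is just $\eta,\theta\neq 0$); they are the Lunardon--Polverino scatteredness condition, which guarantees $\cC_f$ and $\cC_g$ are MRD in the first place.
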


By Hilbert's Theorem 90, if $m | n$, $ \{x\in \F^*_{q^n}: N_{q^n/q^{m}}(x)=1  \} = \{ y^{q^{m}-1}: y\in \F^*_{q^n} \}$. As
\[\gcd(q^{2s}-1,q^n-1) = q^{\gcd(2s,n)}-1=
\begin{cases}
	q^2-1, &2\mid n,\\
	q-1, & 2\nmid n,
\end{cases}
\]
if $N_{q^n/q^{\gcd(2s,n)}}(\theta)= N_{q^n/q^{\gcd(2s,n)}}(\eta)$, then we can always find $z\in \F_{q^n}^*$ such that $\theta = \eta z^{q^{2s}-1}$.  Hence, under the maps $\eta \mapsto \eta z^{q^{2s}-1}$ for $z\in \F_{q^n}^*$,  the elements in $\F_{q^n}^*$ are partitioned into $q^{\gcd(2s,n)}-1$ orbits. Moreover, $\theta_1$ and $ \theta_2$ are in the same orbit under the maps $\eta\mapsto \eta^\tau z^{q^{2s}-1}$  for $z\in \F_{q^n}^*$ and $\tau\in \Aut(\F_{q^n})$ if and only if $N_{q^n/q^{\gcd(2s,n)}}(\theta_1)= \left(N_{q^n/q^{\gcd(2s,n)}}(\theta_2)\right)^{\tau'}$ for some $\tau'\in \Aut(\F_{q^{\gcd(2s,n)}})$.
Note that for most choices of $\theta$ satisfying $N_{q^n/q}(\theta)\notin \{0,1\}$, $N_{q^n/q^{\gcd(s,n)}}(\theta)$ is not in any proper subfield of $\F_{q^{\gcd(s,n)}}$. 
Therefore, by Proposition \ref{le:LTZ_2018} the number of inequivalent MRD codes from family \ref{family:LP} is approximately 
\begin{equation}\label{LPestimate}
\begin{cases}
	\frac{q-2}{r}\frac{\phi(n)}2, & 2\nmid n,\\[4pt]
	\frac{q^2-1-(q+1)}{2r}\frac{\phi(n)}2, & 2\mid n,
\end{cases}
\end{equation}
where $q=p^r$ and $p=\mathrm{char}(\F_q)$. 
Consequently, we derive an upper bound on inequivalent $\mathcal{C}_f$ of family \ref{family:LP} in Table I and on the number of $\PGaL$-inequivalent linear sets associated with these scattered polynomials. Actually, the precise value of this number could be  smaller; see \cite[Section 3]{csajbok_new_maximum_2018} for $n=6,8$.

Finally, regarding \ref{family:LZ} in Table \ref{table:numbers}, for fixed $q$ and $n$, there are exactly $\phi(n)/2$ inequivalent MRD-codes derived from this family of scattered polynomials; see \cite[Theorem 5.4]{longobardi_familyOfLinearMRD_2021}. The precise number of $\PGaL$-inequivalent linear sets provided by it is still unknown, but it is obviously smaller than or equal to $\phi(n)/2$.

\section{Construction}\label{sec:construction}
In this section, our goal is to prove the following main result.
\begin{theorem}\label{t:main}
	Let $n=2t$, $t\geq 3$ and let $q$ be an odd prime power. For each $h \in \Fqn \setminus \Fqt$ such that $h^{q^t+1}=-1$, the $\Fq$-linearized polynomial 
	\begin{equation}\label{scattered}
		\psi_{h,t}(x)=  x^{q}+x^{q^{t-1}}-h^{1-q^{t+1}}x^{q^{t+1}}+h^{1-q^{2t-1}}x^{q^{2t-1}} \in \F_{q^{n}}[x]
	\end{equation}
	is scattered.
\end{theorem}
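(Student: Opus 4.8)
The plan is to use the characterization of scattered polynomials given in the introduction: the polynomial $\psi_{h,t}$ is scattered if and only if, for every $\gamma\in\F_{q^n}$ and every $x\in\F_{q^n}^*$, the equation $\psi_{h,t}(\gamma x)=\gamma\,\psi_{h,t}(x)$ forces $\gamma\in\F_q$. Equivalently, writing the condition at the level of ratios, I must show that if $\psi_{h,t}(z)/z=\psi_{h,t}(y)/y$ for some $z,y\in\F_{q^n}^*$, then $z$ and $y$ are $\F_q$-linearly dependent. I would work with the first formulation. Setting $\gamma\notin\F_q$ and expanding $\psi_{h,t}(\gamma x)-\gamma\psi_{h,t}(x)$, each of the four monomials $x^{q^i}$ (for $i\in\{1,\,t-1,\,t+1,\,2t-1\}$) contributes a term $c_i\,(\gamma^{q^i}-\gamma)\,x^{q^i}$, where $c_i$ is the relevant coefficient. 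The goal is to prove that the only way this $\F_q$-linear combination of the four Frobenius powers can vanish for a fixed $x\neq 0$ and some $\gamma\notin\F_q$ is a contradiction.

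First I would set $a=\gamma^{q}-\gamma$, and express the other three differences $\gamma^{q^{t-1}}-\gamma$, $\gamma^{q^{t+1}}-\gamma$, $\gamma^{q^{2t-1}}-\gamma$ in terms of partial sums of conjugates of $\gamma$, so that the scattering equation becomes a relation among the quantities $x^{q^i}$, the coefficients, and telescoping sums of $\gamma$'s conjugates. The structural feature to exploit is the symmetry of the exponents under the substitution $i\mapsto n-i$: the pairs $\{1,2t-1\}$ and $\{t-1,t+1\}$ are each stable (up to the shift by $t$ in the middle) under $q^t$-conjugation, and the coefficients $-h^{1-q^{t+1}}$ and $h^{1-q^{2t-1}}$ are built from the single element $h$ with $h^{q^t+1}=-1$. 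I expect this norm-type condition on $h$ to be exactly what makes certain cross terms collapse. The natural move is to apply the Frobenius $x\mapsto x^{q^t}$ to the scattering relation and use $h^{q^t}=-h^{-1}$ (which follows from $h^{q^t+1}=-1$) to produce a second relation; comparing the original relation with its $q^t$-image should yield a linear system in the unknowns $x^{q^i}$ whose determinant I can control.

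The concrete execution I foresee is: raise the key identity to the power $q^t$, substitute $h^{q^t}=-h^{-1}$ to simplify the transformed coefficients, and then treat the original relation together with its transform as two equations. Eliminating variables (or equivalently computing a $2\times2$ or $4\times4$ determinant) should reduce everything to a polynomial equation purely in $\gamma$, independent of $x$. From that equation I would deduce that $\gamma$ satisfies a low-degree $\F_q$-polynomial relation forcing $\gamma^{q}=\gamma$ or more generally $\gamma\in\F_q$; the hypothesis $q$ odd (so that $\tfrac12$ is available, as in the related family \ref{family:LZ}) and $h\notin\F_{q^t}$ should be used here to exclude the spurious solutions. Verifying that $\psi_{h,t}$ genuinely permutes $\F_{q^n}$ as an $\F_q$-linear map, or at least that the derived relation has no nontrivial solution, is the point where the combinatorics of the four exponents must be handled carefully.

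The hard part will be the elimination step: with four Frobenius exponents rather than the two that appear in the Lunardon--Polverino family, the linear system in the $x^{q^i}$ is larger, and showing that its only solution corresponds to $\gamma\in\F_q$ requires carefully tracking how the norm condition $h^{q^t+1}=-1$ makes the off-diagonal interactions vanish. I anticipate that the computation naturally splits according to whether certain conjugate differences are zero, and that the genuinely delicate case is the one where $\gamma^{q^t}=\gamma$ (so $\gamma\in\F_{q^t}$) but $\gamma\notin\F_q$; ruling this out should be where the precise form of the coefficients and the assumption $h\notin\F_{q^t}$ do the essential work. A plausible alternative to brute-force elimination is to recognize $\psi_{h,t}$ as lying in the known family \ref{family:LZ} up to an $\F_q$-linear or Frobenius substitution, or to reduce to it; if such a substitution can be exhibited, scatteredness would follow from the cited result of Longobardi--Zanella, which would bypass most of the determinant computation.
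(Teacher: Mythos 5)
Your proposal correctly sets up the standard criterion ($\psi_{h,t}(\gamma x)=\gamma\,\psi_{h,t}(x)$ for some $x\neq 0$ must force $\gamma\in\F_q$) and correctly isolates $h^{q^t}=-h^{-1}$ as the identity to exploit, but the central step --- the elimination --- is only asserted, and as described it does not go through. The relation $\sum_i c_i(\gamma^{q^i}-\gamma)x^{q^i}=0$ holds for one fixed pair $(x,\gamma)$, not identically in $x$, so the quantities $x^{q},x^{q^{t-1}},x^{q^{t+1}},x^{q^{2t-1}}$ cannot be treated as independent unknowns of a linear system. Raising to the $q^t$-th power produces exactly one further equation (the exponents are merely permuted in the pairs $\{1,t+1\}$ and $\{t-1,2t-1\}$), so you have two equations in four dependent quantities, with coefficients involving the unknown conjugates of $\gamma$; applying other Frobenius powers introduces new exponents and new conjugates, so the system never closes and there is no determinant to ``control''. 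This is precisely the difficulty the paper circumvents by a structural idea absent from your sketch: write $\psi_{h,t}=L+M$ with $L(x)=x^q-h^{1-q^{t+1}}x^{q^{t+1}}$ and $M(x)=x^{q^{t-1}}+h^{1-q^{2t-1}}x^{q^{2t-1}}$, which are $\F_{q^t}$-\emph{semilinear} maps; prove, using $h^{q^t+1}=-1$ together with $h\notin\F_{q^t}$ (Proposition \ref{p:hcondition}), that $\F_{q^n}=\ker L\oplus\ker M=\im L\,\oplus\,\im M$ as $\F_{q^t}$-spaces (Proposition \ref{directsum}); decompose $x=x_1+x_2$ along the kernels and expand $\gamma$ in the bases of Lemma \ref{bases+split}; then use the multiplicative compatibilities of Propositions \ref{product-a} and \ref{product-b} to project the single relation onto the summands $\im L$ and $\im M$, obtaining a genuine two-equation system in $(a,a^q)$. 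Even after this reduction, a multi-case analysis ($x_1=0$, $x_2=0$, both nonzero, $\lambda_1\in\F_q$ or not) is required, each branch ending in a norm-type contradiction with $h^{q^t+1}=-1$. Incidentally, your guess that the delicate case is $\gamma\in\F_{q^t}\setminus\F_q$ is backwards: in the paper's setup that case ($a=0$) is dispatched in a few lines, and all the work lies in excluding $\gamma\notin\F_{q^t}$.

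Your fallback --- recognizing $\psi_{h,t}$ as family \ref{family:LZ} up to an $\F_q$-linear or Frobenius substitution --- cannot work either. That family arises exactly in the excluded case $h\in\F_{q^t}$ (which forces $h\in\F_{q^2}$, as noted right after the statement of Theorem \ref{t:main}), and the substitutions you allow are $\GaL(2,q^n)$-equivalences; Theorem \ref{t:code_equiv} and Corollary \ref{coro:BIG} show that for $h\notin\F_{q^t}$ the polynomials $\psi_{h,t}$ fall into far more equivalence classes than all previously known families combined, so no such reduction exists in general. Finally, the aside about verifying that $\psi_{h,t}$ ``permutes $\F_{q^n}$'' is a red herring: scattered polynomials need not be invertible maps, and invertibility plays no role in the criterion.
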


First we note that if $t=3$ in Theorem \ref{t:main}, then the scattered polynomials $\psi_{h,3}(x)$ are exactly those constructed by Bartoli,  Zanella and Zullo in \cite{bartoli_new_maximum_2020}.

Furthermore, if we allowed $h \in \Fqt$, since $-1=h^{q^t+1}=h^2$, then $h \in \F_{q^2}$. Then we may distinguish two cases:
\begin{enumerate}[label=(\alph*)]
    \item $q \equiv 1 \pmod 4$. In this case $h \in \Fq$ and $\psi_{h,t}(x)$ becomes
        $$x^{q}+x^{q^{t-1}}-x^{q^{t+1}}+x^{q^{2t-1}}.$$ This polynomial was proven to be scattered  for each $t \geq 3$ in \cite{longobardi_familyOfLinearMRD_2021}. 
	\item $q \equiv 3 \pmod 4$. In this case $h\in\F_{q^2}$ and $h^q=-h$; hence, $t$ must be even and $\psi_{h,t}(x)$ becomes
		$$x^{q}+x^{q^{t-1}}+x^{q^{t+1}}-x^{q^{2t-1}}.$$
	This polynomial was proven in turn to be scattered in  \cite{longobardi_familyOfLinearMRD_2021}.
\end{enumerate}

Now we note that polynomials described in \eqref{scattered} can be rewritten in the following fashion: 
\begin{equation}\label{psi-as-sum}
    \psi_{h,t}(x)=L(x)+M(x),
\end{equation}
where $L(x)=x^q-h^{1-q^{t+1}}x^{q^{t+1}}$ and $M(x)=x^{q^{t-1}}+h^{1-q^{2t-1}}x^{q^{2t-1}}$.

It is straightforward to see that $L(x)$ and $M(x)$ are $\F_{q^t}$-semilinear maps of $\F_{q^{n}}$ with companion automorphisms $x \mapsto x^q$ and $x \mapsto x^{q^{t-1}}$, respectively. Moreover, we have that
\begin{equation}\label{kerL}
\ker L = \{ x \in \F_{q^n} \,:\,  x-h^{q^{2t-1}-q^t}x^{q^t}= 0\}
\end{equation}
and similarly
\begin{equation}\label{kerM}
    \ker M = \{x \in \F_{q^n} \,:\,  x+h^{q^{t+1}-q^t}x^{q^t}= 0\}.
\end{equation}
In addition, since $h^{q^t+1}=-1$, we have
\begin{align*}
 L(x)^{q^t}&=(x^q-h^{1-q^{t+1}}x^{q^{t+1}})^{q^t}=x^{q^{t+1}}-h^{q^t-q}x^q\\
 &= -h^{q^t-q}(x^q-h^{1-q^{t+1}}x^{q^{t+1}})=-h^{q^t-q}L(x)
\end{align*}
and similarly, we may prove that $M(x)^{q^t}=h^{q^t-q^{t-1}}M(x)$.
Hence, we obtain that
\begin{equation}\label{imL}
    \im \, L = \{ z \in \F_{q^{2t}} \,:\,   z^{q^t}+h^{q^t-q}z= 0 \}
    \end{equation}
    and 
    \begin{equation}\label{imM}
   \im \, M = \{ z \in \F_{q^{2t}} \,: \,   z^{q^t}-h^{q^t-q^{t-1}}z= 0 \}.
    \end{equation}
Clearly, the sets in \eqref{kerL}, \eqref{kerM}, \eqref{imL} and \eqref{imM} are 1-dimensional $\Fqt$-subspaces of $\Fqn$. 

\begin{proposition} \label{p:hcondition}
	Let $n=2t$, $t\geq 1$ and  let $h$ be in $\F_{q^{2t}}$ such that $h^{q^t+1}=-1$. Then $h^{q^2+1}\neq 1$ and $h^{q^{t-2}}\neq -h$.
\end{proposition}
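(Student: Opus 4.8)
The plan is to prove both non-equalities by contradiction, leveraging the one structural fact forced by the hypothesis: since $h^{q^t+1}=-1$ we have $h\neq 0$ and $h^{q^t}=-h^{-1}$. My first observation is that the two conclusions are not independent, so I can reduce the second to the first. Indeed, suppose $h^{q^{t-2}}=-h$ and raise this to the power $q^2$; using $(-1)^{q^2}=-1$ (odd characteristic) together with $h^{q^t}=-h^{-1}$ gives
\[ -h^{-1}=h^{q^t}=\bigl(h^{q^{t-2}}\bigr)^{q^2}=(-h)^{q^2}=-h^{q^2}, \]
whence $h^{q^2}=h^{-1}$, i.e.\ $h^{q^2+1}=1$. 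Thus the assertion $h^{q^{t-2}}\neq -h$ is a formal consequence of the assertion $h^{q^2+1}\neq 1$, and it suffices to establish the latter.

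To prove $h^{q^2+1}\neq 1$, I would assume for contradiction that $h^{q^2}=h^{-1}$. Applying $x\mapsto x^{q^2}$ once more gives $h^{q^4}=h$, so $h\in\F_{q^4}$, and iterating $h^{q^2}=h^{-1}$ yields $h^{q^{2k}}=h^{(-1)^k}$ for every $k$. The argument then splits according to $t\bmod 4$, in each case comparing $h^{q^t}$ computed this way against the value $h^{q^t}=-h^{-1}$ coming from the hypothesis. When $t\equiv 2\pmod 4$ one has $t/2$ odd, so $h^{q^t}=h^{-1}$ and hence $h^{-1}=-h^{-1}$, i.e.\ $2h^{-1}=0$, impossible in odd characteristic. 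When $t$ is odd, $h\in\F_{q^4}\cap\F_{q^{2t}}=\F_{q^2}$ (as $\gcd(4,2t)=2$), forcing $h^{q^2}=h$ and hence $h^2=1$, so $h=\pm 1$; but then $h^{q^t+1}=1\neq -1$ because $q^t+1$ is even, a contradiction.

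The hard part will be the remaining case $t\equiv 0\pmod 4$, since there the computation does not close off at once: one only gets $h^{q^t}=h$, hence $h=-h^{-1}$, i.e.\ $h^2=-1$, which is not absurd by itself. I would resolve this by invoking the elementary fact that in odd characteristic a square root of $-1$ lies in $\F_{q^2}$ (because $4\mid q^2-1$), so that $h^{q^2}=h$; comparing with the standing assumption $h^{q^2}=h^{-1}=-h$ then gives $2h=0$, the desired contradiction. This settles $h^{q^2+1}\neq 1$, and by the reduction of the first paragraph the second claim $h^{q^{t-2}}\neq -h$ follows immediately. I expect the only real subtlety to be this $4\mid t$ case and the need to observe that $\sqrt{-1}\in\F_{q^2}$; every other step is a direct manipulation of the two governing identities $h^{q^2}=h^{-1}$ (assumed) and $h^{q^t}=-h^{-1}$ (given).
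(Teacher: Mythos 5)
Your proof is correct and takes essentially the same route as the paper's: both argue by contradiction from $h^{q^2+1}=1$ with a case analysis on $t \bmod 4$, and both deduce $h^{q^{t-2}}\neq -h$ from $h^{q^2+1}\neq 1$ via the same one-line Frobenius computation (you merely present this reduction first rather than last). The only cosmetic difference is that you derive each case's contradiction through subfield membership ($h\in\F_{q^4}$ from $h^{q^4}=h$, and $\sqrt{-1}\in\F_{q^2}$ in the $4\mid t$ case), whereas the paper uses gcd-of-exponents computations such as $\gcd(q^2+1,q^t+1)$; the substance is identical.
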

\begin{proof}
	First, as $q$ is odd, $\gcd(q^2+1, q^{2t}-1)=2$ if $t$ is odd, and 
	\begin{align*}
		\gcd(q^2+1, q^t+1) &=\begin{cases}
			2, & t\equiv 0 \pmod{4};\\
			q^2+1, & t\equiv 2 \pmod{4}.
		\end{cases}
	\end{align*}

	Assume on the contrary that $h^{q^2+1}=1$. Together with $h^{q^t+1}=-1$ and the above GCD conditions, we deduce the following results.
	
	If $t$ is odd,  then $h^2=1$ which contradicts $h^{q^t+1}=-1$. If $4\mid t$, then $h^2=-1$ which implies $h^{q^2+1}=-1$ contradicting the assumption. If $2\mid t$ but $4\nmid t$, then $h^{q^t+1}=1$ contradicting $h^{q^t+1}=-1$. 
	
	From $h^{q^t+1}=-1$ and $h^{q^2+1}\neq 1$, we finally derive $h^{q^{t-2}}\neq -h$ directly.
\end{proof}

\begin{proposition}\label{directsum}
Let $n=2t$ with $t \geq 3$. The finite field $\Fqn$, seen as $\Fqt$-vector space, is both the direct sum of $\ker L$ and  $\ker M$, and of $\im\,L$ and  $\im\,M$.
\end{proposition}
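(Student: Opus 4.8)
The plan is to exploit the dimension bookkeeping already recorded just before the statement: each of the four sets in \eqref{kerL}, \eqref{kerM}, \eqref{imL} and \eqref{imM} is a $1$-dimensional $\F_{q^t}$-subspace of $\F_{q^n}=\F_{q^{2t}}$, while the ambient space has $\F_{q^t}$-dimension $2$. Hence $\F_{q^n}=\ker L\oplus\ker M$ is equivalent to $\ker L\cap\ker M=\{0\}$, and likewise $\F_{q^n}=\im L\oplus\im M$ is equivalent to $\im L\cap\im M=\{0\}$: in each case the two summands have complementary dimension, so a trivial intersection already forces the whole space to be their internal direct sum. This reduces both assertions to a single non-degeneracy condition on $h$, which I expect to be exactly the one furnished by Proposition \ref{p:hcondition}.

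For the kernels, I would take a nonzero $x\in\ker L\cap\ker M$ and combine the two defining relations, $x=h^{q^{2t-1}-q^t}x^{q^t}$ and $x=-h^{q^{t+1}-q^t}x^{q^t}$. Since $x\neq 0$ gives $x^{q^t}\neq 0$, these yield $h^{q^{2t-1}-q^t}=-h^{q^{t+1}-q^t}$, that is $h^{q^{2t-1}}=-h^{q^{t+1}}$. The crucial simplification is to rewrite everything through the single automorphism $x\mapsto x^{q^t}$ using the hypothesis $h^{q^t+1}=-1$ in the form $h^{q^t}=-h^{-1}$. Raising this to appropriate powers of $q$, and recalling that $q$ is odd so $(-1)^{q^j}=-1$, one obtains $h^{q^{2t-1}}=(h^{q^t})^{q^{t-1}}=-h^{-q^{t-1}}$ and $h^{q^{t+1}}=(h^{q^t})^{q}=-h^{-q}$. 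Substituting, the relation collapses to $h^{q-q^{t-1}}=-1$, i.e. $(h^{1-q^{t-2}})^{q}=-1$; since Frobenius is injective and $(-1)^q=-1$, this forces $h^{1-q^{t-2}}=-1$, equivalently $h^{q^{t-2}}=-h$, contradicting Proposition \ref{p:hcondition}. Therefore $\ker L\cap\ker M=\{0\}$ and the first decomposition holds.

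For the images I would run the identical computation on \eqref{imL} and \eqref{imM}. A nonzero common element $z$ satisfies both $z^{q^t}=-h^{q^t-q}z$ and $z^{q^t}=h^{q^t-q^{t-1}}z$, whence $-h^{q^t-q}=h^{q^t-q^{t-1}}$, and dividing by $h^{q^t}$ gives $h^{-q^{t-1}}=-h^{-q}$. This is precisely the relation extracted in the kernel case (rewritten), so it again forces $h^{q^{t-2}}=-h$ and contradicts Proposition \ref{p:hcondition}. Thus $\im L\cap\im M=\{0\}$ and $\F_{q^n}=\im L\oplus\im M$ as well.

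The only delicate step is the exponent manipulation in the middle of the kernel argument: everything must be funneled through $h^{q^t}=-h^{-1}$ so that the high powers $h^{q^{2t-1}}$ and $h^{q^{t+1}}$ are expressed in terms of $h^{-q^{t-1}}$ and $h^{-q}$, with the sign changes tracked carefully (they behave only because $q$, and each $q^{j}$, is odd). Once this reduction is carried out correctly, both direct-sum statements hinge on the same inequality $h^{q^{t-2}}\neq-h$, which is guaranteed by Proposition \ref{p:hcondition}, so no further input is required.
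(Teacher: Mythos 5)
Your proof is correct and follows essentially the same route as the paper's: reduce each direct-sum claim to trivial intersection of two $1$-dimensional $\F_{q^t}$-subspaces, extract the relation $h^{q^{2t-1}}=-h^{q^{t+1}}$ (resp. $-h^{q^t-q}=h^{q^t-q^{t-1}}$) from a hypothetical nonzero common element, and contradict $h^{q^{t-2}}\neq -h$ from Proposition \ref{p:hcondition}. The only cosmetic difference is that you funnel the exponent manipulation through $h^{q^t}=-h^{-1}$, while the paper writes $h^{q^{2t-1}}=(h^{q^{t-2}})^{q^{t+1}}$ and cancels the Frobenius power directly; you also write out the image case that the paper dismisses as ``a similar argument.''
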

\begin{proof}
 Since $\ker L$ and $\ker M$ are $1$-dimensional $\Fqt$-subspaces of $\Fqn$; it is enough to prove that  $\ker L \cap \ker M = \{0\}$. In this regard, let $u \in \ker L \cap \ker M$. By \eqref{kerL} and \eqref{kerM}, we get 
 $h^{q^{2t-1}}=-h^{q^{t+1}}$, i.e. $(h^{q^{t-2}})^{q^{t+1}}=-h^{q^{t+1}}$.
This implies that $h^{q^{t-2}}=-h$, and since $h^{{q^t}+1}=-1$ this contradicts Proposition \ref{p:hcondition}. 

Taking into account (\ref{imL}) and (\ref{imM}), a similar argument shows that the additive group of $\Fqn$, seen as $\Fqt$-vector space, can be also written as $\im L \oplus \im M$. 
\end{proof}

Consider now the following $\Fqt$-linear maps of $\Fqn$
\begin{equation}\label{R-T-maps}
    R(x)=x^{q^t}+h^{q^{t-1}-q}x \quad\textnormal{and}\quad T(x)=x^{q^t}+h^{q-q^{t-1}}x.
\end{equation}
It is straightforward to see that $\dim_{\Fqt} \ker R = \dim_{\Fqt}\ker T= 1$; moreover, $\ker T = h^{q^{t-1}-q} \ker R$.

\begin{lemma}\label{bases+split}
Let $\rho, \tau \in \F^*_{q^n}$, $n=2t$ and $t \geq 3$ such that $\rho \in \ker  R$ and $\tau \in \ker T$.
Then
\begin{enumerate}[label=(\roman*)]
    \item $\{1,\rho\}$ and $\{1,\tau\}$ are $\Fqt$-bases of $\Fqn$.
    \item if $\tau=h^{q^{t-1}-q} \rho$ and an element $\gamma \in \Fqn$ has components ($\lambda_1$,$\mu_1$) in the $\F_{q^t}$-basis $\{1,\rho\}$, then the components of  $\gamma$ in $\{1,\tau\}$ are
    \begin{equation}\label{components}
        \left(\lambda_1+ \mu_1\rho\left(1-h^{q^{t-1}-q}\right), \mu_1\right).
    \end{equation}
\end{enumerate}
\end{lemma}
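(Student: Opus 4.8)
The plan is to handle the two assertions separately: part (i) reduces to showing $\rho,\tau\notin\Fqt$, and part (ii) to verifying one algebraic identity together with a check that the claimed new first coordinate actually lies in $\Fqt$.

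For (i), since $\dim_{\Fqt}\Fqn=2$ it suffices to prove that $1$ and $\rho$ are $\Fqt$-independent, i.e. $\rho\notin\Fqt$. I would argue by contradiction: if $\rho\in\Fqt$ then $\rho^{q^t}=\rho$, so the defining relation $\rho\in\ker R$, namely $\rho^{q^t}+h^{q^{t-1}-q}\rho=0$, forces $\rho(1+h^{q^{t-1}-q})=0$ and hence $h^{q^{t-1}-q}=-1$. Rewriting this as $(h^{q^{t-2}})^q=(-h)^q$ (using that $q$ is odd) and applying injectivity of the $q$-power map yields $h^{q^{t-2}}=-h$, contradicting Proposition \ref{p:hcondition}. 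The identical argument with $T$ in place of $R$ gives $\tau\notin\Fqt$, so both $\{1,\rho\}$ and $\{1,\tau\}$ are $\Fqt$-bases of $\Fqn$.

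For (ii), write $c:=h^{q^{t-1}-q}$, so that the hypothesis reads $\tau=c\rho$. The plan is to verify directly the identity
\[ \gamma=\big(\lambda_1+\mu_1\rho(1-c)\big)\cdot 1+\mu_1\,\tau, \]
which follows at once by replacing $\tau$ with $c\rho$ and cancelling. By part (i) the coordinates of $\gamma$ in $\{1,\tau\}$ are unique, so this will prove the claim as soon as the candidate first coordinate $\lambda_1+\mu_1\rho(1-c)$ is shown to lie in $\Fqt$ (the second coordinate being $\mu_1\in\Fqt$ by hypothesis).

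The hard part will be exactly this last check: establishing $\rho(1-c)\in\Fqt$ even though $\rho\notin\Fqt$. I would prove it by showing $\rho(1-c)$ is fixed by the generator $\sigma\colon x\mapsto x^{q^t}$ of $\Gal(\Fqn/\Fqt)$. Two facts feed in. First, from $h^{q^t+1}=-1$ one gets $\sigma(h)=h^{q^t}=-h^{-1}$; since $\sigma$ commutes with the $q$-power maps and $(-1)^{q^{t-1}-q}=1$ (the exponent being even, as $q$ and hence $q^{t-1}$ are odd), this gives $\sigma(c)=c^{-1}$. Second, $\rho\in\ker R$ gives $\rho^{q^t}=-c\rho$. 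Combining these, $\sigma(\rho(1-c))=\rho^{q^t}(1-c^{-1})=-c\rho(1-c^{-1})=\rho(1-c)$, so $\rho(1-c)\in\Fqt$ as required. Since $\lambda_1,\mu_1\in\Fqt$, the first coordinate then lies in $\Fqt$, and the computation of the components in $\{1,\tau\}$ is complete.
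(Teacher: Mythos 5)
Your proof is correct, and part (i) is the same argument as the paper's: assume $\rho\in\Fqt$, use the kernel relation to force $h^{q^{t-1}-q}=-1$, hence $h^{q^{t-2}}=-h$, contradicting Proposition \ref{p:hcondition}. In part (ii), however, you take a genuinely different route. The paper starts from the unknown components $(\lambda_2,\mu_2)$ of $\gamma$ in $\{1,\tau\}$ (which exist by part (i)), raises $\lambda_2+\mu_2\tau=\lambda_1+\mu_1\rho$ to the $q^t$-th power using the kernel relations for both $\rho$ and $\tau$, and solves the resulting $2\times 2$ linear system to \emph{derive} the formula. You instead \emph{verify} the claimed expansion, which is immediate algebra once $\tau=c\rho$ with $c=h^{q^{t-1}-q}$, and shift all the content to the one nontrivial point: that the candidate coefficient $\rho(1-c)$ lies in $\Fqt$. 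You prove this by showing it is fixed by the generator $x\mapsto x^{q^t}$ of $\Gal(\Fqn/\Fqt)$, combining $\rho^{q^t}=-c\rho$ with $c^{q^t}=c^{-1}$, the latter deduced from $h^{q^t+1}=-1$ together with the evenness of $q^{t-1}-q$. The two arguments rest on the same ingredients (the $q^t$-Frobenius, the kernel relations, Proposition \ref{p:hcondition}), but they buy different things: the paper's system-solving produces the coordinates without having to guess them, while your verification isolates the conceptual reason the formula works --- the coefficient $\rho\left(1-h^{q^{t-1}-q}\right)$ is Frobenius-fixed --- and makes explicit where the hypothesis $h^{q^t+1}=-1$ enters; in the paper that same information is packaged implicitly in the kernel relation $\tau^{q^t}=-h^{q-q^{t-1}}\tau$.
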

\begin{proof}
$(i)$ It is enough showing that $\rho$ and $\tau$ are not in $\Fqt$. We will show that $\rho \not \in \Fqt$. A similar argument can be applied to $\tau$ as well.
Suppose that $\rho \in \Fqt$, then $\rho^{q^t-1}=1$. Then, by hypothesis,
$$1=\rho^{q^t-1}=-h^{q^{t-1}-q}.$$
Hence $h^{q^{t-2}}=-h$ which, by Proposition \ref{p:hcondition}, is not the case.\\
$(ii)$ Let $\gamma  \in \Fqn$ and suppose that $\gamma=\lambda_1+\mu_1\rho$ with $\lambda_1,\mu_1 \in \Fqt$. Also, denote by  $\lambda_2$ and $\mu_2$ the components of $\gamma$ in the  $\F_{q^t}$-basis $\{1,\tau\}$. Of course, we have
\begin{equation}\label{coordinates}
\lambda_2+\mu_2\tau=\lambda_1+\mu_1\rho
\end{equation}
Raising \eqref{coordinates} to the $q^t$-th power, and taking into account that $\rho \in \ker R$ and $\tau \in \ker T$, we get the following linear system  in the unknowns $\lambda_2$ and $\mu_2$
\begin{equation*}\label{system_coordinates}
\begin{cases}
\lambda_2+\mu_2\tau=\lambda_1+\mu_1\rho \\
\lambda_2 -\mu_2h^{q-q^{t-1}} \tau=\lambda_1-\mu_1h^{q^{t-1}-q}\rho.
\end{cases}
\end{equation*}
Clearly, this linear system has a unique solution; i.e.,
\begin{equation*}
\lambda_2=\lambda_1+\mu_1\rho \left (1-h^{q^{t-1}-q}\right ) \quad\textnormal{and} \quad
\mu_2=\mu_1.
\end{equation*} Hence, the assertion follows.
\end{proof}


\begin{proposition}\label{product-a}
For any nonzero vectors $u \in \ker L$, $v \in \ker M$ and any $a \in \Fqn$, the following statements are equivalent:
\begin{enumerate}[label=(\roman*)]
    \item  $a \in \ker R$;
    \item $av \in \ker L$;
    \item $a M(u) \in \im\,L$.
\end{enumerate}
\end{proposition}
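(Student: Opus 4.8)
The plan is to prove the two equivalences (i)$\Leftrightarrow$(ii) and (i)$\Leftrightarrow$(iii) separately by direct computation, each time rewriting the relevant membership condition by means of the explicit descriptions \eqref{kerL}, \eqref{kerM}, \eqref{imL}, \eqref{imM} together with the $q^t$-semilinearity relations $L(x)^{q^t}=-h^{q^t-q}L(x)$ and $M(x)^{q^t}=h^{q^t-q^{t-1}}M(x)$ recorded above. Throughout I would repeatedly invoke the identity $h^{q^t}=-h^{-1}$, which is just a restatement of $h^{q^t+1}=-1$, and which (since $q$ is odd, so $(-1)^{q^j}=-1$) yields $h^{q^{t+1}}=(h^{q^t})^q=-h^{-q}$ and $h^{q^{2t-1}}=(h^{q^t})^{q^{t-1}}=-h^{-q^{t-1}}$. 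Note first that if $a=0$ all three conditions hold trivially (recall $0\in\ker R$), so I may assume $a\neq 0$.

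For (i)$\Leftrightarrow$(ii): by \eqref{kerL}, the condition $av\in\ker L$ reads $av=h^{q^{2t-1}-q^t}(av)^{q^t}=h^{q^{2t-1}-q^t}a^{q^t}v^{q^t}$. Since $v\in\ker M$ is nonzero, \eqref{kerM} gives $v^{q^t}=-h^{q^t-q^{t+1}}v$; substituting this and cancelling the nonzero factor $v$ leaves $a=-h^{q^{2t-1}-q^{t+1}}a^{q^t}$, i.e.\ $a^{q^t}=-h^{q^{t+1}-q^{2t-1}}a$. Using the identities above one computes $h^{q^{t+1}-q^{2t-1}}=(-h^{-q})(-h^{q^{t-1}})=h^{q^{t-1}-q}$, so this is exactly the relation $a^{q^t}=-h^{q^{t-1}-q}a$ defining $\ker R$ in \eqref{R-T-maps}. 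Hence $av\in\ker L\Leftrightarrow a\in\ker R$.

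For (i)$\Leftrightarrow$(iii): I would first observe that $M(u)\neq 0$. Indeed $u\in\ker L\setminus\{0\}$, and by Proposition \ref{directsum} one has $\ker L\cap\ker M=\{0\}$, so $u\notin\ker M$. By \eqref{imL}, the condition $aM(u)\in\im L$ reads $(aM(u))^{q^t}+h^{q^t-q}aM(u)=0$, that is $a^{q^t}M(u)^{q^t}+h^{q^t-q}aM(u)=0$. Replacing $M(u)^{q^t}$ by $h^{q^t-q^{t-1}}M(u)$ and cancelling the nonzero factor $M(u)$ gives $a^{q^t}h^{q^t-q^{t-1}}+h^{q^t-q}a=0$, i.e.\ $a^{q^t}=-h^{q^{t-1}-q}a$, which is once more the defining relation of $\ker R$. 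Thus $aM(u)\in\im L\Leftrightarrow a\in\ker R$, closing the cycle of equivalences.

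The computations are routine linearized-polynomial manipulations, and the two genuinely substantive points are where care is needed: the exponent reduction $h^{q^{t+1}-q^{2t-1}}=h^{q^{t-1}-q}$ in the (i)$\Leftrightarrow$(ii) step, which is the one place where several powers of $h$ must be collapsed simultaneously and where $h^{q^t+1}=-1$ enters in an essential way, and the verification that $M(u)\neq 0$, for which the direct-sum decomposition of Proposition \ref{directsum} is precisely what is needed. I expect that exponent simplification to be the main bookkeeping hurdle, while the rest follows by cancellation once the kernel/image relations are substituted.
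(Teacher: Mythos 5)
Your proof is correct, and the computations check out, including the one genuinely delicate point, the exponent collapse $h^{q^{t+1}-q^{2t-1}}=h^{q^{t-1}-q}$, and the use of Proposition \ref{directsum} to get $M(u)\neq 0$. However, it is organized differently from the paper's proof. The paper establishes a one-way cycle (i)$\Rightarrow$(ii)$\Rightarrow$(iii)$\Rightarrow$(i): for (i)$\Rightarrow$(ii) it writes $a=\lambda\rho$ with $\lambda\in\F_{q^t}$ and $\rho$ a generator of $\ker R$, and expands $L(av)$ from the polynomial expression of $L$ using its $\F_{q^t}$-semilinearity; for (ii)$\Rightarrow$(iii) it first distills from $L(av)=0$ the intermediate relation $a+h^{q^{2t-1}-q^{t+1}}a^{q^t}=0$ (its equation \eqref{a-condition}) and then feeds that into the image condition for $aM(u)$; only its final leg (iii)$\Rightarrow$(i) coincides with your second computation, run in one direction. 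Your hub-and-spoke layout --- reducing (ii) and (iii) each directly to the defining equation $a^{q^t}=-h^{q^{t-1}-q}a$ of $\ker R$ via the $q^t$-conjugation descriptions \eqref{kerL}, \eqref{kerM}, \eqref{imL} and the relation $M(x)^{q^t}=h^{q^t-q^{t-1}}M(x)$ --- avoids both the scalar decomposition $a=\lambda\rho$ and the bridging identity, and since every step (substitution of a known identity, cancellation of the nonzero factors $v$ and $M(u)$) is reversible, you obtain genuine biconditionals with two computations instead of three. What you give up is minor: the paper's expansion of $L(av)$ exhibits explicitly how $L$ acts on products $\lambda\rho v$, in the same style it reuses in Proposition \ref{product-b} and in the proof of Theorem \ref{t:main}, whereas your argument leans entirely on the kernel/image characterizations; both are equally rigorous, and yours is the leaner of the two.
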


\begin{proof}
Clearly, if $a$ is zero the statement is trivially verified. Suppose that $a \in \Fqn^*$. Let $\rho$ be a nonzero vector in $\ker R$ which means $\ker R=\langle \rho \rangle_{q^t}$.\medskip\\
$(i) \Rightarrow (ii)$. Let $a \in \langle \rho \rangle_{q^t} $, then there exists $\lambda \in \Fqt$ such that $a=\lambda \rho$. 
Then
\begin{equation*}
    L(av)=\lambda^qL(\rho v)=\lambda^q \left((\rho v)^q - h^{1-q^{t+1}}(\rho v)^{q^{t+1}} \right).
\end{equation*}
Since  $\rho \in \ker R$ and $v \in \ker M$, by \eqref{kerM} and \eqref{R-T-maps}, we get
\begin{equation*}
    (\lambda\rho v)^q\left(1-h^{-q^{t+2}+1+q^t-q^2}\right).
\end{equation*}
Moreover, since $h^{q^t+1}=-1$, the latter expression is equal to 0; hence, $av \in \ker L$.\medskip\\
$(ii)\Rightarrow (iii)$ Let $v \in \ker M$. Since $av \in \ker L$, 
\[0=L(av)=(av)^q-h^{1-q^{t+1}}(av)^{q^{t+1}}=v^q\left(a^q+h^{1-q^{t+2}}a^{q^{t+1}}\right),\]
this is 
\begin{equation}\label{a-condition}
a+h^{q^{2t-1}-q^{t+1}}a^{q^t}=0.
\end{equation}
We will prove that $aM(u) \in \im L$. So, putting $z=M(u)$,  by \eqref{imL}, this is equivalent to prove that 
\begin{equation*}
    (az)^{q^t}+h^{q^t-q}(az)=0.
\end{equation*}
By Proposition \ref{directsum}, since $u\in\ker L$, $z=M(u)\ne 0$. Also, since $h^{q^t+1}=-1$, by \eqref{imM}, we have
\begin{equation*}
    (az)^{q^t}+h^{q^t-q}(az)=zh^{q^t-q}\cdot\left(a^{q^t}h^{q^{2t-1}-q^{t+1}}+a\right)
\end{equation*}
and by \eqref{a-condition} the last expression equals 0, proving the result.\medskip\\
$(iii) \Rightarrow (i)$ As before, by Proposition \ref{directsum}, $z=M(u)$ is a nonzero element of $\im\,M$. Since $az \in \im\,L$,  by \eqref{imL} and \eqref{imM}, we obtain
\begin{equation*}
    0=(az)^{q^t}+h^{q^t-q}(az)=z\left(a^{q^t}h^{q^t-q^{t-1}}+h^{q^t-q}a\right)
=h^{q^t-q^{t-1}}z\left(a^{q^t}+h^{q^{t-1}-q}a\right),
\end{equation*}
which implies $a^{q^t}+h^{q^{t-1}-q}a=0$. Then, by \eqref{R-T-maps}, $a \in \ker R$. Finally, since $\ker R$ is a $1$-dimensional $\Fqt$-subspace of $\Fqn$, $a=\lambda \rho$ for some $\lambda \in \Fqt$. 
\end{proof}

Similarly, we have the following result.

\begin{proposition}\label{product-b}
For any nonzero vectors $u \in \ker L$, $v \in \ker M$ and any $b \in \Fqn$, the following statements are equivalent:
\begin{enumerate}[label=(\roman*)]
    \item $b \in \ker T$;
    \item $b\,u \in \ker M$;
    \item $ b L(v) \in \im\, M$.
\end{enumerate}
\end{proposition}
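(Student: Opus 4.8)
The plan is to prove Proposition~\ref{product-b} by exactly mirroring the cyclic argument used for Proposition~\ref{product-a}, since the two statements are related by swapping the roles of $L\leftrightarrow M$, $\ker R\leftrightarrow \ker T$, $u\leftrightarrow v$, and $a\leftrightarrow b$. First I would dispose of the trivial case $b=0$ and assume $b\in\F_{q^n}^*$. Fix a nonzero $\tau\in\ker T=\langle\tau\rangle_{q^t}$, so that any $b\in\ker T$ is $b=\mu\tau$ for some $\mu\in\F_{q^t}$.

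For the implication $(i)\Rightarrow(ii)$, I would take $b=\mu\tau$ and compute $M(bu)=\mu^{q^{t-1}}M(\tau u)=\mu^{q^{t-1}}\bigl((\tau u)^{q^{t-1}}+h^{1-q^{2t-1}}(\tau u)^{q^{2t-1}}\bigr)$, using that $M$ is $\F_{q^t}$-semilinear with companion automorphism $x\mapsto x^{q^{t-1}}$. Then I would substitute the defining relation of $\ker L$ from \eqref{kerL} for $u$ (namely $u=h^{q^{2t-1}-q^t}u^{q^t}$) and the relation $\tau^{q^t}=-h^{q-q^{t-1}}\tau$ coming from $\tau\in\ker T$ via \eqref{R-T-maps}. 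After collecting exponents of $h$ and invoking $h^{q^t+1}=-1$ to simplify, I expect the bracketed factor to vanish, giving $M(bu)=0$, i.e.\ $bu\in\ker M$. For $(ii)\Rightarrow(iii)$, starting from $M(bu)=0$ I would factor out a nonzero power of $u$ to extract a linear condition on $b$ analogous to \eqref{a-condition}, say $b+h^{\ast}b^{q^t}=0$ for the appropriate exponent $\ast$; then, setting $w=L(v)$ (which is nonzero and lies in $\im L$ by Proposition~\ref{directsum} since $v\in\ker M$), I would verify $w^{q^t}-h^{q^t-q^{t-1}}w=0$ of \eqref{imM} holds for $bw$ by factoring and substituting the condition on $b$. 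Finally $(iii)\Rightarrow(i)$ follows by the same factoring trick applied to $bw\in\im M$, producing $b^{q^t}+h^{q^{t-1}-q}b\cdots=0$, which by \eqref{R-T-maps} says precisely $b\in\ker T$.

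The genuine content is entirely in the exponent bookkeeping: each implication reduces to checking that a single sum of two powers of $h$ collapses to zero after repeatedly applying $h^{q^t+1}=-1$ (equivalently $h^{q^t}=-h^{-1}$). The main obstacle, such as it is, will be tracking the Frobenius exponents carefully through the semilinearity of $M$ (companion $x\mapsto x^{q^{t-1}}$ rather than $x\mapsto x^q$), since the shifted automorphism changes which $h$-exponents appear relative to the $L$-case in Proposition~\ref{product-a}. Rather than reproduce these computations in full, I would note that they are formally identical to those of Proposition~\ref{product-a} under the substitution described above, and verify the one nontrivial $h$-exponent identity in each implication. No new structural input is needed beyond \eqref{kerL}--\eqref{imM}, \eqref{R-T-maps}, Proposition~\ref{p:hcondition}, and Proposition~\ref{directsum}.
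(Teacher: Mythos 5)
Your proposal is correct and takes essentially the same route as the paper's own proof, which is indeed the word-for-word mirror of Proposition \ref{product-a} under the substitution you describe: the paper derives the condition $b^{q^t}+h^{q-q^{t-1}}b=0$ from $M(bu)=0$ by factoring out $u^{q^{t-1}}$, and then checks $(bz)^{q^t}-h^{q^t-q^{t-1}}(bz)=-zh^{q^t-q}\left(b^{q^t}+h^{q-q^{t-1}}b\right)$ with $z=L(v)$, exactly as you outline. The only slip is in your final implication, where the $\ker T$ condition reads $b^{q^t}+h^{q-q^{t-1}}b=0$ rather than $b^{q^t}+h^{q^{t-1}-q}b=0$ (the latter exponent is the $\ker R$ condition), but this does not affect the structure of the argument.
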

\begin{proof}
As before, we may suppose that $b \in \Fqn^*$, otherwise the statement is easily verified. Let $\tau$ be a nonzero vector in $\ker T$ which means $\ker T=\langle \tau \rangle_{q^t}$.\medskip \\ 
$(i) \Rightarrow (ii)$. Let $b \in \langle \tau\rangle_{q^t} $, then there exists $\lambda \in \Fqt$ such that $b=\lambda \tau$. Consider $M(bu)=M(\lambda \tau u)$. 
Then
\begin{equation*}
    M(bu)=\lambda^{q^{t-1}}M(\tau u)=\lambda^{q^{t-1}} \left((\tau u)^{q^{t-1}} + h^{1-q^{2t-1}}(\tau u)^{q^{2t-1}} \right).
\end{equation*}
Since  $\tau \in \ker T$, $u \in \ker L$, by \eqref{kerL} and \eqref{R-T-maps}, we get
\begin{equation*}
    (\lambda\tau u)^{q^{t-1}}\left(1-h^{1-q^{t-2}+q^t-q^{2t-2}}\right).
\end{equation*}
Moreover, since $h^{q^t+1}=-1$, the latter expression is equal to 0; hence, $bu \in \ker M$.\medskip\\
$(ii)\Rightarrow (iii)$ Let $u \in \ker L$. Since $bu \in \ker M$, 
\[0=M(bu)=(bu)^{q^{t-1}}+h^{1-q^{2t-1}}(bu)^{q^{2t-1}}=u^{q^{t-1}}(b^{q^{t-1}}+h^{1-q^{t-2}}b^{q^{2t-1}}),\]
this is 
\begin{equation}\label{b-condition}
b^{q^t}+h^{q-q^{t-1}}b=0.
\end{equation}
We will prove that $bL(v) \in \im M$. So, putting $z=L(v)$,  by \eqref{imM}, this is equivalent to prove that 
\begin{equation*}
    (bz)^{q^t}-h^{q^t-q^{t-1}}(bz)=0.
\end{equation*}
By Proposition \ref{directsum}, since $v\in\ker M$ then $z=L(v) \in \im\,L   $ and $L(v)\ne 0$. Since $h^{q^t+1}=-1$, by \eqref{imL} we get
\begin{equation*}
    (bz)^{q^t}-h^{q^t-q^{t-1}}(bz)=-zh^{q^t-q}(b^{q^t}+h^{q-q^{t-1}}b)
\end{equation*}
which equals 0 taking \eqref{b-condition} into account, proving the result.\medskip\\
$(iii) \Rightarrow (i)$ As before, by Proposition \ref{directsum}, $z=L(v)$ is a nonzero element of $\im\,L$. Since $bz \in \im\,M$,  by \eqref{imL} and \eqref{imM}, we obtain
\begin{equation*}
    0=(bz)^{q^t}-h^{q^t-q^{t-1}}(bz)=-zh^{q^t-q}(b^{q^t}+h^{q-q^{t-1}}b),
\end{equation*}
this is $b^{q^t}+h^{q-q^{t-1}}b=0$. By \eqref{R-T-maps}, then $b \in \ker T$. Since $\ker T$ is a $1$-dimensional $\Fqt$-subspace of $\Fqn$, $b=\lambda \tau$ for some $\lambda \in \Fqt$. 
\end{proof}

We are now in the position to prove our main result of this section.
\begin{proof}[Proof of Theorem \ref{t:main}]\label{scatteredconditio}
Let $\psi(x):=\psi_{h,t}(x)$, we want to prove that for each $x\in\F_{q^n}^*$ and for each $\gamma\in\F_{q^n}$ such that 
\begin{equation}\label{e:psirx}
\psi(\gamma x)=\gamma \psi(x)
\end{equation}
we get $\gamma \in\Fq$.
Recall that 
\[\psi(x)=L(x)+M(x)\] 
as in \eqref{psi-as-sum}.
Also, by Proposition \ref{directsum}, 
any $x \in\Fqn$ can be uniquely written as $x=x_1+x_2$, where $x_1 \in \ker L$ and $x_2 \in \ker M$. Similarly, by Lemma \ref{bases+split}, if $\gamma \in\Fqn$ there are exactly two elements $\lambda_1,\mu_1 \in \Fqt$ and two elements $\lambda_2, \mu_2 \in \Fqt$  such that $$\lambda_1+\mu_1\rho=\gamma= \lambda_2+\mu_2\tau$$ where
$\rho \in \ker R$ and $\tau= h^{q^{t-1}-q}\rho$. It is easy to check that $\tau\in\ker T$. Putting $a=\mu_1\rho$ and $b=\mu_2\tau$, which imply $a\in \ker R$ and $b\in \ker T$,  Condition \eqref{e:psirx} may be re-written as follows
\begin{equation}\label{notsplit}
\begin{split}
L((\lambda_1+a)(x_1+x_2))+&M((\lambda_2+b)(x_1+x_2))=\\
& = (\lambda_2+b)L(x_1+x_2)+(\lambda_1+a)M(x_1+x_2).
\end{split}
\end{equation}
Also, since $x_1 \in \ker L$, $x_2 \in \ker M$, $L(x)$ and $M(x)$ are $\Fqt$-semilinear maps and by $(ii)$ of Propositions \ref{product-a} and $(ii)$ of Proposition \ref{product-b}, Equation \eqref{notsplit} is equivalent to
\begin{equation*}
\begin{split}
 L(\lambda_1 x_2)+L(a x_1)&+M(\lambda_2x_1) +M(b x_2)=\\
& =\lambda_2L(x_2)+b L(x_2)+\lambda_1M(x_1)+a M(x_1).
\end{split}
\end{equation*}
and hence
\begin{equation}\label{splitted}
\begin{split}
 \lambda_1 ^qL(x_2)+L(a x_1)&-\lambda_2L(x_2)-a M(x_1)=\\
& = b L(x_2)+\lambda_1M(x_1)-\lambda_2^{q^{t-1}}M(x_1)-M(b x_2).
\end{split}
\end{equation}
Now, since the image spaces of the maps $L(x)$ and $M(x)$ are $\Fqt$-spaces, taking $(iii)$ of Proposition \ref{product-a}  and $(iii)$ of Proposition \ref{product-b} into account, the expressions on  left and right hand sides of \eqref{splitted} belong to $\im \, L$ and $\im \, M$, respectively.
By Proposition \ref{directsum}, both sides of \eqref{splitted} must be equal to zero an hence we obtain the following system
\begin{equation*}
\begin{cases}
L(ax_1)-aM(x_1)=(\lambda_2-\lambda^q_1)L(x_2)\\
bL(x_2)-M(bx_2)=(\lambda_2^{q^{t-1}}-\lambda_1)M(x_1).
\end{cases}
\end{equation*}
Raising to the $q$-th power the second equation, we get
\begin{equation}\label{system1}
\begin{cases}
L(ax_1)-aM(x_1)=(\lambda_2-\lambda^q_1)L(x_2)\\
b^qL(x_2)^q-M(bx_2)^q=(\lambda_2-\lambda_1^q)M(x_1)^q.
\end{cases}
\end{equation}
Since $a=\mu_1 \rho$, $b=\mu_2\tau$ and $\tau=h^{q^{t-1}-q}\rho$, from Lemma \ref{bases+split} it follows that $\mu_1=\mu_2$ and $b=h^{q^{t-1}-q}a$. 

If $a=0$, we have $\mu_1=0$ and hence $\gamma=\lambda_1=\lambda_2\in\F_{q^t}$. Also, from \eqref{system1}, if $\lambda_2\neq \lambda_1^q$, then $L(x_2)=M(x_1)=0$. By Proposition \ref{directsum}, $x=x_1=x_2=0$, a contradiction. Then $\lambda_1=\lambda_2=\lambda_1^q$, which gives $\lambda_1\in\F_q$, i.e. $\gamma \in\F_q$ and $\psi(x)$ is a scattered polynomial.

In the remainder of the proof, we are going to show that $a\neq 0$, i.e. $\gamma\in\F_{q^{2t}}\setminus\F_{q^t}$ leads to contradictions. Depending on the value of $x_1$ and $x_2$, we separate the proof into three cases.
\medskip

\noindent\textbf{Case 1.} $x_1=0$. The system in \eqref{system1} is reduced to
    \begin{equation*}
	\begin{cases}\label{x_1=0}
	(\lambda_2-\lambda^q_1)L(x_2)=0\\
	b^qL(x_2)^q-M(bx_2)^q=0.
	\end{cases}
	    \end{equation*}
	Since $x_2 \in \ker M$, by \eqref{R-T-maps}, from second equation we get
	\begin{equation*}
	    b^{q-1}=\frac{h^{q-1}}{\left(x_2(1+h^{q-q^{t-1}})\right)^{q^2-1}}.
	\end{equation*}
	Then, there exists $\lambda \in \Fq^*$ such that 
	\begin{equation*}
	    b=\lambda \cdot \frac{h}{\left(x_2(1+h^{q-q^{t-1}})\right)^{q+1}}.
	\end{equation*}
	Since $b \in \ker T$, then $b^{q^t}+h^{q-q^{t-1}}b=0$ and we get
	\begin{equation*}
	\frac{h^{q^t-1}}{\left(x_2(1+h^{q-q^{t-1}})\right)^{q^t(q+1)}} +   \frac{h^{q-q^{t-1}}}{\left(x_2(1+h^{q-q^{t-1}})\right)^{q+1}}=0,
	\end{equation*}
	whence, since $x_2 \in \ker M$,
	\begin{equation*}
	\Biggl( \frac{1+h^{q-q^{t-1}}}{
	h^{q-1}(1+h^{q^{t-1}-q})} \Biggr )^{q+1}=-h^{1+q-q^{t-1}-q^t}.
	\end{equation*}
	This is equivalent to
	\begin{equation*}
	\Biggl( \frac{h^{q^{t-1}-1}(1+h^{q-q^{t-1}})}{
	h^{q-1}(1+h^{q^{t-1}-q})} \Biggr )^{q+1}=-1,
	\end{equation*}
	whence we have $1^{q+1}=-1$, a contradiction.

\medskip
\noindent\textbf{Case 2.} $x_2=0$. The system in \eqref{system1} is reduced to
	   \begin{equation*}
	\begin{cases}
	L(ax_1)-aM(x_1)=0\\
	(\lambda_2-\lambda_1^q)M(x_1)^q=0.
	\end{cases}
	    \end{equation*}
	By the first equation, taking into account that $a\in  \ker R$, $x_1 \in  \ker L$ and $h^{q^t+1}=-1$, we obtain
	\begin{equation*}
	    a^{q-1}=(x^{q}_1)^{q^{t-2}-1}\cdot \frac{1+h^{1-q^{t-2}}}{1+h^{q^{t+2}-1}}=\left(x^{q}_1(1+h^{q^{t+2}-1})\right)^{q^{t-2}-1}.
	\end{equation*}
	Then there exists $\lambda \in \Fq^*$ such that
	\begin{equation*}
	a=\lambda \left(x^q_1(1+h^{q^{t+2}-1})\right)^\nu,
	\end{equation*}
	where $\nu=(q^{t-2}-1)/(q-1)$.
	
	By \eqref{R-T-maps},  since $a \in \ker R$, then
	\begin{equation*}
	    \left(x_1^{q^{t+1}}(1+h^{q^2-q^{t}})\right)^\nu+h^{q^{t-1}-q}\left(x_1^q(1+h^{q^{t+2}-1})\right)^\nu=0.
	\end{equation*}
	Moreover, since $x_1 \in \ker L$, then
	\begin{equation*}
	     \Biggl (\ \frac{h^{q^{t+1}-1} (1+h^{q^2-q^t})}{1+h^{q^t-q^2}}  \Biggr)^\nu=-h^{q^{t-1}-q}.
	\end{equation*}
	The last expression is equivalent to
	  \[ \Biggl (\ \frac{h^{q^{t}-q} (1+h^{q^2-q^t})}{h^{q^2-q}(1+h^{q^t-q^2})}  \Biggr)^\nu=-1, \]
	whence $1^\nu=-1$, leading to a contradiction.

\medskip
\noindent\textbf{Case 3.} $x_1,x_2 \neq 0$. Recall that $a \in \ker R$, $b=h^{q^{t-1}-q}a$, $\lambda_2 = \lambda_1+(1-h^{q^{t-1}-q})a$, $x_1 \in \ker L$ and $x_2 \in \ker M$. Then, by \eqref{system1}, $a$ turns out to be a nonzero solution of the following linear system

\begin{equation}\label{complete-linear-system}
    \begin{cases}
x_1^q(1+h^{q^{t}-q^2})a^q-\left(M(x_1)+(1-h^{q^{t-1}-q})L(x_2)\right)a=(\lambda_1-\lambda^q_1)L(x_2)\\
h^{q^t-q^2}L(x_2)^qa^q+\left(x_2^{q^t}(1+h^{q^{t-1}-q})-(1-h^{q^{t-1}-q})M(x_1)^q\right)a=(\lambda_1-\lambda_1^q)M(x_1)^q.
\end{cases}
\end{equation}

By $x_1\in \ker L$ and $x_2\in \ker M$, we obtain the following two equations which will be frequently used later,
\begin{align*}
	L(x_2) &= x_2^q(1+h^{1-q^{t+2}}),\\
	M(x_1) &= x_1^{q^{t-1}} (1+h^{1-q^{t-2}}).
\end{align*}

\medskip
\noindent\textit{- Case 3.1} First of all, suppose that $\lambda_1 \in \Fq$, then System \eqref{complete-linear-system} becomes

\begin{equation}\label{eq:case3.1_star}
    \begin{cases}
x_1^q(1+h^{q^{t}-q^2})a^q-\left(M(x_1)+(1-h^{q^{t-1}-q})L(x_2)\right)a=0\\
h^{q^t-q^2}L(x_2)^qa^q+\left(x_2^{q^t}(1+h^{q^{t-1}-q})-(1-h^{q^{t-1}-q})M(x_1)^q\right)a=0.
\end{cases}
\end{equation}
and since $a$ is a nonzero solution then
\begin{eqnarray}\label{determinant-incomplete}
 x_1^q(1+h^{q^{t}-q^2})\left(x_2^{q^t}(1+h^{q^{t-1}-q})-(1-h^{q^{t-1}-q})M(x_1)^q\right)=\nonumber\\=
 -h^{q^t-q^2}L(x_2)^q\left(M(x_1)+(1-h^{q^{t-1}-q})L(x_2)\right).
\end{eqnarray}

Since $L(x_2) \neq 0 \neq M(x_1)$, from \eqref{eq:case3.1_star} we get
$$M(x_1)^q\left(x_1^q(1+h^{q^{t}-q^2})a^q-M(x_1)a\right)=L(x_2)\left(h^{q^t-q^2}L(x_2)^q a^q+x_2^{q^t}(1+h^{q^{t-1}-q})a\right)$$
whence
\begin{align}
\nonumber \left(x_1^q M(x_1)^q(1+h^{q^{t}-q^2}) - h^{q^t-q^2}L(x_2)L(x_2)^q\right)a^q=\\
=\left(M(x_1)M(x_1)^q+x_2^{q^t}(1+h^{q^{t-1}-q})L(x_2)\right)a. \label{Fq-case}
\end{align}

Next we want to show that the coefficient of $a^q$ in \eqref{Fq-case} cannot be $0$. By way of contradiction, suppose that
\begin{equation}\label{D2}
	x_1^q\left(1+h^{q^{t}-q^2}\right)M(x_1)^q=h^{q^t-q^2}L(x_2)L(x_2)^q,
\end{equation}
from \eqref{determinant-incomplete} it follows
 \begin{equation}\label{D1}
 x_1^q x_2^{q^t}\left(1+h^{q^{t}-q^2}\right)\left(1+h^{q^{t-1}-q}\right)=-h^{q^t-q^2}L(x_2)^qM(x_1).
 \end{equation}
 Since $x_1 \in \ker L$ and $x_2 \in \ker M$, this is equivalent to

 \begin{equation}\label{x1fromx2}
   x_1^{q^{t-2}-1}=-(x_2^q)^{q^{t-2}-1}\frac{(1+h^{q-q^{t-1}})(1+h^{q^{t-2}-1})}{(1+h^{1-q^{t+2}})(1+h^{q^{2t-1}-q^{t-3}})}.
\end{equation}
This formula is equivalent to
\begin{equation}
\label{newformula}
    x_1^{q^{t-2}-1}=-\Biggl (x_2^q\cdot \frac{1+h^{1-q^{t+2}}}{(1+h^{q-q^{t-1}} )h^{q^{t-1}}} \Biggr )^{q^{t-2}-1}.
\end{equation}
Since \begin{equation*}
 d=\gcd\left(2t,t-2\right)=\gcd (4,t-2)=\left\{\begin{array}{lll}
 1, &\mbox{if $t$ odd}\\
 2,  &\mbox{if $t \equiv 0 \pmod 4$ }\\
 4,  &\mbox{if $t \equiv 2 \pmod 4$ }
 \end{array}
 \right.  
 \end{equation*}
 there exists a solution  of the equation $x^{q^{t-2}-1}=-1$ for any $t \geq 3$ and $t \not \equiv 2 \pmod 4$ in $\Fqn$. Thus, for $t\equiv 2 \pmod 4$, Equation \eqref{newformula} gives a contradiction. In the remaining cases,
 \begin{equation*}
 x_1= \omega x^q_2 \cdot \frac{1+h^{1-q^{t+2}}}{(1+h^{q-q^{t-1}} )h^{q^{t-1}}},
 \end{equation*}
for some $\omega\in \Fqn^*$ satisfying $\omega^{q^{t-2}-1}=-1$.
By substituting this expression in \eqref{D2}, since $x_1\in \ker L$, we get
\begin{equation}\label{eq:omega}
\omega^{q+1}=-1,
\end{equation}
and hence $\omega\in\F_{q^2}$.

If $t$ is odd, since $\omega^{q^{t-2}}=-\omega$, then $\omega^q=-\omega$ and from \eqref{eq:omega}, we get $\omega=\pm 1$. 
If $t \equiv 0 \pmod 4$, since $\omega=\omega^{q^{{t-2}}}=-\omega$. In both cases we get a contradiction.


Then, by \eqref{Fq-case}, we get 
\begin{equation}\label{eq:a^q-1}
\begin{split}
    a^{q-1}&=\frac{M(x_1)M(x_1)^q-h^{q^t-q^{t+1}}x_2L(x_2)(1+h^{q^{t-1}-q})}{x_1^q(1+h^{q^{t}-q^2})M(x_1)^q-h^{q^t-q^2}L(x_2)L(x_2)^q}=\\
    &=h^{q-1} \cdot \frac{1+h^{q^{t-1}-q}}{1+h^{q^t-q^2}} \cdot \frac{x_1M(x_1)-x_2L(x_2)}{x_1^qM(x_1)^q-x_2^qL(x_2)^q}=\\
    &=\Biggl( \frac{h}{(1+h^{q^{t-1}-q})(x_1M(x_1)-x_2L(x_2))} \Biggr)^{q-1},
\end{split}
\end{equation}
whence
\begin{equation*}
    a=\lambda \cdot \frac{h}{(1+h^{q^{t-1}-q})(x_1M(x_1)-x_2L(x_2))}
\end{equation*}
for some $\lambda \in \F_{q}^*$. Since $a \in \ker R$, then
\begin{equation*}
\frac{h^{q^t}}{\left((1+h^{q^{t-1}-q})(x_1M(x_1)-x_2L(x_2))\right)^{q^t}}+ h^{q^{t-1}-q}\cdot \frac{h}{(1+h^{q^{t-1}-q})(x_1M(x_1)-x_2L(x_2))}=0.
\end{equation*}
Recalling that $x_1 \in \ker L$ and $x_2 \in \ker M$, we get
\[x_1M(x_1)-x_2L(x_2) = x_1^{q^{t-1}+1}\left(1+h^{1-q^{t-2}}\right)-x_2^{q+1}\left(1+h^{1-q^{t+2}}\right).  \]
This implies that
\begin{equation*}
    -\frac{1}{h^{q^t}(1+h^{q-q^{t-1}})}+\frac{h^{q^{t-1}-q}\cdot h}{1+h^{q^{t-1}-q}}=0,
\end{equation*}
which means $h^{q^t+1}=1$, a contradiction. Then $\lambda_1$ may not belong to $\Fq$.
\medskip

\noindent- $\textit{Case 3.2}$ Let $\lambda_1\notin\F_q$ and let $a$ be a nonzero solution of System \eqref{complete-linear-system}. 
If this system admits more than one solution, then each $2\times 2$ minor of the associated matrix of \eqref{complete-linear-system} is zero. In particular Equations \eqref{D2} and \eqref{D1} hold true, obtaining a contradiction as in the previous case.

Then, System \eqref{complete-linear-system} must admits a unique nonzero solution $(a,a^q)  \in \F_{q^n}^2$. By computing the ratio $a^{q-1}$ of its components, we get

\begin{equation*}
\begin{split}
    a^{q-1}&=\frac{\begin{vmatrix}
   L(x_2) &  -M(x_1) \\
    M(x_1)^q &  x_2^{q^t}(1+h^{q^{t-1}-q}) \end{vmatrix}}{\begin{vmatrix}
    x_1^q(1+h^{q^{t}-q^2}) & L(x_2) \\
    h^{q^t-q^2}L(x_2)^q & M(x_1)^q
    \end{vmatrix}}=\\
    &=\frac{M(x_1)M(x_1)^q-h^{q^t-q^{t+1}}x_2L(x_2)(1+h^{q^{t-1}-q})}{x_1^q(1+h^{q^{t}-q^2})M(x_1)^q-h^{q^t-q^2}L(x_2)L(x_2)^q}.
\end{split}
\end{equation*}This is again Equation \eqref{eq:a^q-1}. 
Repeating the arguments as in \textit{Case 3.1} we get a contradiction. \qedhere
\end{proof}

\section{A new family of MRD codes}\label{sec:MRD}
Let start by the following preliminary general result.

\begin{lemma}\label{le:full_auto_MRD}
	Let $f$ be a scattered polynomial in  ${\mathcal L}_{n,q}[x]$ and let $\cC_f$ denote the associated MRD code.  Then $\Aut(\cC_f)$ consists of elements $(\alpha x^{q^\ell}, L_2, \rho)\in {\mathcal L}_{n,q}[x]\times  {\mathcal L}_{n,q}[x]\times \Aut(\F_{q^n})$ with invertible $L_2$ , $\alpha\in \F_{q^n}^*$ and $\ell \in \{0,1,\ldots, n-1\}$ such that $\cC_{f^{\rho q^\ell}} \circ x^{q^\ell}\circ L_2 = \cC_f$.
	
	Furthermore, there is a bijection between each $L_2$ and each $\GL(2,q^n)$-equivalence map from $U_f$ to $U_{f^{\rho q^\ell}}$, where $U_f =\{ (x,f(x)): x \in \F_{q^n} \}$. In particular, the multiplicative group of $I_R(\cC_f)\setminus \{0\}$ and the $\GL(2,q^n)$-automorphism group of $U_f$ are isomorphic.
\end{lemma}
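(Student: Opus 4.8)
The plan is to determine an automorphism $(L_1,L_2,\rho)$ of $\cC_f$ in three stages: first force the left component $L_1$ to be a monomial $\alpha x^{q^\ell}$ using the left idealizer; then absorb the Frobenius twist and reinterpret the surviving condition on $L_2$ as a $\GL(2,q^n)$-equivalence between the subspaces $U_f$ and $U_{f^{\rho q^\ell}}$; finally specialize to $\rho=\mathrm{id}$, $\ell=0$ to obtain the isomorphism with $I_R(\cC_f)\setminus\{0\}$.

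First I would pin down $L_1$. Since $\cC_f$ is an $\F_{q^n}$-MRD code, its left idealizer equals the set of scalar maps $I_L(\cC_f)=\{\alpha x:\alpha\in\F_{q^n}\}\cong\F_{q^n}$. An automorphism $\Psi\colon\varphi\mapsto L_1\circ\varphi^\rho\circ L_2$ maps $\cC_f$ onto itself, and for a scalar $\theta=\alpha x\in I_L(\cC_f)$ one computes
\[ \Psi(\theta\circ\varphi)=\big(L_1\circ\theta^\rho\circ L_1^{-1}\big)\circ\Psi(\varphi), \]
so that $L_1\circ(\alpha^\rho x)\circ L_1^{-1}$ lies in $I_L(\cC_f)$ for every $\alpha$. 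Writing $L_1=\sum_j d_j x^{q^j}$ and imposing that $x\mapsto cx$ be conjugated by $L_1$ to another scalar map $x\mapsto c'x$ for every $c\in\F_{q^n}$ forces $d_j c^{q^j}=c'd_j$ for all $c$; because the Frobenius maps $x\mapsto x^{q^j}$ are pairwise distinct, at most one coefficient $d_j$ can be nonzero, whence $L_1=\alpha x^{q^\ell}$. A direct computation then gives $\alpha x^{q^\ell}\circ\cC_f^{\,\rho}=\cC_{f^{\rho q^\ell}}\circ x^{q^\ell}$, which is independent of $\alpha$; hence the automorphism condition collapses to $\cC_{f^{\rho q^\ell}}\circ x^{q^\ell}\circ L_2=\cC_f$ and $\alpha\in\F_{q^n}^*$ is an entirely free parameter, exactly as claimed.

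Next I would establish the bijection for fixed $(\rho,\ell)$. Put $g=f^{\rho q^\ell}$ and $M=x^{q^\ell}\circ L_2$, so that the condition reads $\cC_g\circ M=\cC_f$ with $M$ invertible. Applying this to the two generators of $\cC_g$ gives $M=x\circ M\in\cC_f$ and $g\circ M\in\cC_f$, say $M=C_0x+D_0f$ and $g\circ M=C_1x+D_1f$; these coefficients are unique since $x$ and $f$ are $\F_{q^n}$-linearly independent. The matrix $\Theta=\begin{pmatrix}C_0&D_0\\C_1&D_1\end{pmatrix}$ sends $(x,f(x))$ to $(M(x),g(M(x)))\in U_g$, so it maps $U_f$ to $U_g$; invertibility of $M$ shows the first-coordinate map is a bijection, forcing $\Theta\in\GL(2,q^n)$. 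Conversely, a $\GL(2,q^n)$-map $U_f\to U_g$ has first row recording an invertible $M=C_0x+D_0f$ with $\cC_g\circ M\subseteq\cC_f$ (hence equality by invertibility), from which one recovers $L_2=x^{q^{n-\ell}}\circ M$. These assignments are mutually inverse, giving the asserted bijection $L_2\leftrightarrow\Theta$. Specializing to $\rho=\mathrm{id}$ and $\ell=0$, the invertible $L_2$ with $\cC_f\circ L_2=\cC_f$ are precisely $I_R(\cC_f)\setminus\{0\}$ (all nonzero elements of $I_R(\cC_f)$ are invertible, $I_R$ being a field), and the bijection becomes $I_R(\cC_f)\setminus\{0\}\to\{\Theta\in\GL(2,q^n):\Theta(U_f)=U_f\}$; checking that $M'\circ M\mapsto\Theta_{M'}\Theta_M$ (the geometric action composes as $x\mapsto M(x)\mapsto M'(M(x))$) shows this map is a group (anti-)isomorphism, hence an isomorphism.

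The main obstacle I anticipate is the first stage: justifying that an equivalence conjugates the left idealizer into itself and then extracting from this that $L_1$ must be a single monomial rather than a genuine sum, which rests essentially on $\cC_f$ being $\F_{q^n}$-MRD so that $I_L(\cC_f)$ is exactly the scalar maps. The remaining bookkeeping — tracking the composite twist $\rho q^\ell$, the identity $\alpha x^{q^\ell}\circ\cC_f^{\,\rho}=\cC_{f^{\rho q^\ell}}\circ x^{q^\ell}$, and the composition order in the final homomorphism check — is routine but error-prone, and is where index or transpose slips would most easily occur.
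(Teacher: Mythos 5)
Your proof has the same three-stage skeleton as the paper's: force $L_1=\alpha x^{q^\ell}$ via the left idealizer, convert the residual condition $\cC_{f^{\rho q^\ell}}\circ x^{q^\ell}\circ L_2=\cC_f$ into a $2\times 2$ matrix correspondence between $U_f$ and $U_{f^{\rho q^\ell}}$, and specialize to obtain the isomorphism between $I_R(\cC_f)\setminus\{0\}$ and the $\GL(2,q^n)$-automorphism group of $U_f$. The one real difference is the first stage: the paper notes that $L_1$ normalizes $I_L(\cC_f)$ in $\GL(n,q)$ and then cites Liebhold--Nebe and Huppert for the fact that this normalizer is $(\F_{q^n}^*,\cdot)\rtimes\Aut(\F_{q^n}/\F_q)$, whereas you extract the monomial form by hand, writing $L_1=\sum_j d_jx^{q^j}$ and comparing coefficients in $L_1\circ(cx)=(c'x)\circ L_1$; since distinct Frobenius powers are distinct maps on $\F_{q^n}$, at most one $d_j$ survives. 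Your version is self-contained where the paper leans on a citation, at the cost of a few extra lines; both rest on the same prior fact that $I_L(\cC_f)$ consists exactly of the scalar maps.

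One justification in your second stage is false as stated, though easily repaired. ``Invertibility of $M$ shows the first-coordinate map is a bijection, forcing $\Theta\in\GL(2,q^n)$'' is not a valid implication: the matrix $\bigl(\begin{smallmatrix}1&0\\1&0\end{smallmatrix}\bigr)$ is singular yet its first-coordinate map is the identity. What actually forces invertibility is scatteredness: if the rows of $\Theta$ were proportional, say $(C_1,D_1)=k(C_0,D_0)$, then $g(M(x))=kM(x)$ for all $x$, and surjectivity of $M$ would give $g(y)=ky$ for every $y\in\F_{q^n}$, contradicting that $g$ is scattered. This is precisely the argument the paper makes at this point (``$(c,d)=k(a,b)$ implies $f(x)=\mu x$''), and since you already have the identity $g\circ M=C_1x+D_1f$ on the table, it is a one-line fix --- a slip rather than a missing idea. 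The remainder (the converse direction of the bijection, the identification of invertible $L_2$ with $\cC_f\circ L_2=\cC_f$ as $I_R(\cC_f)\setminus\{0\}$, and the composition check $\Theta_{M'\circ M}=\Theta_{M'}\Theta_M$) is sound.
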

\begin{proof}
	Suppose that $\varphi\in I_L(\cC_f)$ and $(L_1, L_2,\rho)\in \Aut(\cC_f)$. Then for any $g\in \cC_f$, there exists an element $g'\in \cC_f$ such that
	\[ \varphi \circ (L_1 \circ g^\rho \circ L_2) = L_1 \circ g' \circ L_2, \]
	which means 
	\[  L_1^{-1}\circ \varphi \circ L_1 \circ g^\rho\in \cC_f, \]
	 and hence, $ L_1^{-1}\circ \varphi \circ L_1 \in I_L(\cC_f)$. In another word, $L_1$ is in the normalizer of $ I_L(\cC_f)$ in $\GL(n,q)$, which actually isomorphic to $(\F^*_{q^n},\cdot)\rtimes \Aut(\F_{q^n}/\F_q)$; see \cite{liebhold_automorphism_2016} and \cite[Hilfssatz 3.11, Chapter 2]{huppert_endliche_1967}. Thus $L_1(x)=\alpha x^{q^\ell}$ for some $\alpha\in \F_{q^n}^*$ and $\ell\in \{0,1,\ldots, n-1\}$. Moreover, 
	 \[L_1 \circ \cC_{f^{\rho}}\circ L_2=\cC_{f^{\rho q^\ell}}\circ x^{q^\ell} \circ L_2 = \cC_f.\]
	 Therefore, we have finished the first part of the statement.
	 
	 As the identity map is in $\cC_{f^{\rho q^\ell}}$, $x^{q^\ell} \circ L_2(x)\in \cC_f$, that is $x^{q^\ell} \circ L_2(x)=ax+bf(x)$ for some $a,b\in \F_{q^n}$. Furthermore, $\cC_{f^{\rho q^\ell}} \circ x^{q^\ell}\circ L_2 = \cC_f$ also implies the existence of $c,d\in \F_{q^n}$ such that
	 \begin{equation}\label{eq:equiv}
	 cx+df(x)=f^{\rho q^\ell}\left(ax+bf(x)\right),
	 \end{equation}
	 for all $x\in \F_{q^n}$.
	 By setting $y=ax+bf(x)$, \eqref{eq:equiv} is equivalent to 
	 \[
	 		\begin{pmatrix}
	 a&b\\c&d
	 \end{pmatrix}
	 \begin{pmatrix}
	 x\\ f(x)
	 \end{pmatrix}=
	 \begin{pmatrix}
	 y\\ f^{\rho q^\ell }(y)
	 \end{pmatrix}.
	 \]
	 Also, the matrix 	$	\begin{pmatrix}
	 a&b\\c&d
	 \end{pmatrix}$ is invertible. Indeed, if there exists  $k\in \F_{q^n}^*$ such that $(c,d)=k(a,b)$ then by \eqref{eq:equiv} we get $f(x)=\mu x$ for some $\mu\in\F_{q^n}^*$, contradicting the fact that $f$ is scattered.
	 This means $U_f$ is $\GL(2,q^n)$-equivalent to $U_{f^{\rho q^\ell}}$. Also from \eqref{eq:equiv}, it can be easily seen that $(c,d)$ is uniquely determined by $(a,b)$. Therefore, there is a 1-1 correspondence between $L_2$ and $	\begin{pmatrix}
	 a&b\\c&d
	 \end{pmatrix}$ mapping $U_f$ to $U_{f^{\rho  q^\ell}}$. 
	 
	 In particular, when $f=f^{\rho q^\ell}$, all such $\begin{pmatrix}
	 	a&b\\c&d
	 \end{pmatrix}$ form the $\GL(2,q^n)$-automorphism group of $U_f$, denoted by $G_f$. As each nonzero element of $I_R(\cC_f)$ is invertible (see \cite{liebhold_automorphism_2016,lunardon_kernels_2017}), it is straighforward to see that the map described above determines an isomorphism between the groups $G_f$ and $I_R(\cC_f)\setminus \{0\}$. Hence, the result follows.
\end{proof}

\bigskip

Let $\psi_{h,t}$ be defined as in Theorem \ref{t:main} and 
	\begin{equation}\label{eq:C_ht}
	\cC_{h,t}=\{ax+b\psi_{h,t}(x): a,b \in \F_{q^n}\}.
	\end{equation}
	By a result in  \cite{sheekey_new_2016}, $\cC_{h,t}$ is an $\F_{q^n}$-linear MRD code.
	
	\bigskip

The following result is about the equivalence among $\cC_{h,t}$'s for different $h$ and the automorphism group of $\cC_{h,t}$.
\begin{theorem}\label{t:code_equiv}
	Let $n=2t$ with $t>4$. For each $h,k\in \F_{q^n}$ satisfying $h^{q^t+1}=k^{q^t+1}=-1$, the following hold    
	\begin{enumerate}[label=(\alph*)]
		\item if $t\not\equiv 2\pmod{4}$, then $\cC_{h,t}$ and $\cC_{k,t}$ are equivalent if and only if $h=\pm k^\rho$ where $\rho\in \Aut(\F_{q^n})$;
		\item if $t\equiv 2\pmod{4}$, then $\cC_{h,t}$ and $\cC_{k,t}$ are equivalent if and only if  $h=\ell k^\rho$ where $\ell^{q^2+1}=1$ and $\rho\in \Aut(\F_{q^n})$.
	\end{enumerate}
	The full automorphism group $\Aut(\cC_{h,t})$ is isomorphic to $(\F_{q^n}^*,\cdot )\times (\F_{q^2}^*,\cdot )\rtimes H$,
	where $H=\{\rho\in \Aut(\F_{q^n}): h=\pm h^\rho \}$ if $t\not\equiv 2\pmod{4}$, and $H=\{\rho\in \Aut(\F_{q^n}): (h^\rho/h)^{q^2+1}=1 \}$ if $t\equiv 2\pmod{4}$.	
\end{theorem}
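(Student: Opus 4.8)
The plan is to reduce the equivalence question to the space-equivalence criterion of Theorem~\ref{t:code_equiv=space_equiv} together with the structural information coming from Lemma~\ref{le:full_auto_MRD}. By Theorem~\ref{t:code_equiv=space_equiv}, $\cC_{h,t}$ and $\cC_{k,t}$ are equivalent if and only if $U_{\psi_{h,t}}$ and $U_{\psi_{k,t}}$ lie in the same $\GaL(2,q^n)$-orbit, i.e.\ there exist a field automorphism $\rho$, an invertible matrix in $\GL(2,q^n)$, and a suitable twist so that applying them to $\{(x,\psi_{h,t}(x))\}$ yields $\{(x,\psi_{k,t}(x))\}$. Unwinding this, the condition becomes: there are $\alpha,\beta,\gamma,\delta\in\F_{q^n}$ with $\alpha\delta-\beta\gamma\ne0$ such that $\gamma x+\delta\psi_{k,t}(x)=\lambda\bigl(\alpha x+\beta\psi_{k,t}(x)\bigr)$-type relations force
\[
\psi_{h,t}(x)=\tfrac{1}{\alpha+\beta\psi_{k,t}^{\rho}(x)/x}\cdots,
\]
but the clean way is to compare coefficients. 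So first I would write out that an equivalence amounts to the existence of $A,B,C,D\in\F_{q^n}$ (not all degenerate) with
\[
C\,x+D\,\psi_{h,t}(x)=\psi_{k,t}^{\rho}\!\left(A\,x+B\,\psi_{h,t}(x)\right)\quad\text{for all }x,
\]
after absorbing the monomial $x^{q^\ell}$ guaranteed by Lemma~\ref{le:full_auto_MRD}; handling that $\ell$-twist explicitly is a preliminary bookkeeping step.

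Next I would expand both sides as $q$-polynomials and match coefficients of $x^{q^i}$ for $i=0,1,\dots,n-1$. Because $\psi_{h,t}$ has only the four exponents $q,q^{t-1},q^{t+1},q^{2t-1}$, the system collapses to a small number of polynomial equations in $A,B,C,D$ and in the $\rho$-images of the four coefficients $1,1,-h^{1-q^{t+1}},h^{1-q^{2t-1}}$ of $\psi_{k,t}$. The hypothesis $t>4$ is what guarantees the four exponents $1,t-1,t+1,2t-1$ are pairwise distinct modulo $n$ and do not collide with the ``trivial'' exponents, so the coefficient-matching is genuinely rigid and forces $B=0$ (so that no cross terms $\psi^{q^i}$ appear) and $A,D$ to be scalars tied to $\rho$. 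From the surviving equations I expect to extract a relation of the shape $h^{(\ast)}=k^{\rho\,(\ast)}\cdot(\text{unit})$, and then using $h^{q^t+1}=k^{q^t+1}=-1$ to pin down the unit. This is where the case split on $t\bmod 4$ enters: the gcd computations in Proposition~\ref{p:hcondition} show that the relevant roots of unity form $\{\pm1\}$ when $t\not\equiv2\pmod4$ and the larger group $\{\ell:\ell^{q^2+1}=1\}$ when $t\equiv2\pmod4$, which is exactly the dichotomy in statements (a) and (b).

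For the automorphism group, I would specialize the equivalence computation to $k=h$ (so $\cC_{h,t}$ mapped to itself) and read off the stabilizer. By Lemma~\ref{le:full_auto_MRD} the group $\Aut(\cC_{h,t})$ is built from the left-idealizer part, which is $\cong(\F_{q^n}^*,\cdot)$ since $\cC_{h,t}$ is $\F_{q^n}$-linear, the right-idealizer/$\GL(2,q^n)$-automorphism part of $U_{\psi_{h,t}}$, and the field automorphisms $\rho$ fixing the code. The self-equivalence analysis above shows the $\GL(2,q^n)$-automorphisms of $U_{\psi_{h,t}}$ correspond to the multipliers $\ell$ with $\ell^{q^2+1}=1$, giving the factor $(\F_{q^2}^*,\cdot)$ (note $\#\{\ell:\ell^{q^2+1}=1\}=q^2+1$ matches $|\F_{q^2}^*|$ only after the appropriate identification, which I would verify), while the admissible $\rho$ form exactly the group $H$ as defined, namely those satisfying $h=\pm h^\rho$ or $(h^\rho/h)^{q^2+1}=1$ according to the case. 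Assembling these three commuting/semidirect pieces yields $(\F_{q^n}^*,\cdot)\times(\F_{q^2}^*,\cdot)\rtimes H$.

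The main obstacle I anticipate is the coefficient-matching step: verifying that $B$ must vanish and that no ``accidental'' equivalences arise from exponents coinciding modulo $n$. Controlling these collisions is precisely why the hypothesis $t>4$ is needed, and carefully tracking the $q$-power exponents (together with the companion-automorphism relations $L(x)^{q^t}=-h^{q^t-q}L(x)$ and $M(x)^{q^t}=h^{q^t-q^{t-1}}M(x)$ established earlier) to rule out degenerate cases will be the technically delicate part. Once the system is shown to force $B=0$ and scalar $A,D$, the remaining norm/root-of-unity bookkeeping is routine given Proposition~\ref{p:hcondition}.
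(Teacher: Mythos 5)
Your overall skeleton does match the paper's: reduce to $\GaL(2,q^n)$-equivalence of $U_h$ and $U_k$ via Theorem \ref{t:code_equiv=space_equiv}, write an equivalence as $cx+d\psi_{h,t}(x)=\psi_{k,t}^{\rho}\left(ax+b\psi_{h,t}(x)\right)$, compare coefficients of the exponents that cannot collide when $t>4$, and split on $t\bmod 4$ via gcd computations as in Proposition \ref{p:hcondition}. However, there is a genuine gap at the heart of your coefficient analysis: it is not true that coefficient matching forces $b=0$ (your $B$). In the paper's proof both cases occur and are informative. When $b\neq 0$, the vanishing of the coefficients of $x^{q^2}$ and of $x^{q^{t-2}}$ produces two expressions for $b^{q^{t+1}-q}$, whose compatibility yields exactly $h^{q^2+1}=k^{q^2+1}$, i.e.\ the key relation $\ell^{q^2+1}=1$ for $\ell=h/k$; when $b=0$ a different system forces $a\in\F_{q^{\gcd(t-2,4)}}$ and leads to the same conclusions on $\ell$. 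Equivalences with $b\neq0$ genuinely exist: for $t$ odd, the self-equivalences of $U_h$ with $b\neq 0$ (parametrized by $\delta\in\F_{q^2}$ with $\delta^{q}=-\delta$) are precisely what enlarge the $\GL(2,q^n)$-stabilizer of $U_h$ from the group $\{a\in\F_q^*\}$ of order $q-1$ (all that $b=0$ provides when $\gcd(t,2)=1$) to a cyclic group of order $q^2-1$. If you force $B=0$, your automorphism group for odd $t$ comes out with the factor $\F_q^*$ instead of $\F_{q^2}^*$, contradicting the statement you are trying to prove.

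A second, related error is your identification of the factor $(\F_{q^2}^*,\cdot)$ with ``the multipliers $\ell$ with $\ell^{q^2+1}=1$''. These are different objects with different cardinalities ($q^2+1$ versus $q^2-1$), as you yourself half-notice before waving it away. The multipliers $\ell$ parametrize which elements $k$ give codes $\cC_{k,t}$ equivalent to $\cC_{h,t}$ (this is the count $\xi_h$ used in Corollary \ref{coro:BIG}); the factor $(\F_{q^2}^*,\cdot)$ in $\Aut(\cC_{h,t})$ is instead the multiplicative group of the right idealizer $I_R(\cC_{h,t})\cong\F_{q^2}$, which by Lemma \ref{le:full_auto_MRD} is isomorphic to the $\GL(2,q^n)$-automorphism group of $U_h$ and must be computed from the self-equivalence matrices ($k=h$, $\rho=\mathrm{id}$) as described above. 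Finally, your plan only addresses necessity: the ``if'' directions of (a) and (b) — in particular that for $t\equiv 2\pmod 4$ every $\ell$ with $\ell^{q^2+1}=1$ actually yields an equivalence — require exhibiting a solution, which the paper does in the $b=0$ case by solving $a^{q^3-q}=\ell^{q+1}$ with $a\in\F_{q^4}$; this constructive step is absent from your proposal.
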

\begin{proof}
	Let $U_h=\{(x,\psi_{h,t}(x)): x\in \F_{q^n}\}$ and $U_k=\{(x,\psi_{k,t}(x)): x\in \F_{q^n}\}$.
	By Theorem \ref{t:code_equiv=space_equiv}, we only have to consider the  $\GaL(2,q^n)$-equivalence $U_h$ and $U_k$. Therefore, to get (a) and (b), we just need to show that  a necessary and sufficient condition for  that $U_h$ is $\GL(2,q^n)$-equivalent to $U_k$ is 
	\begin{equation}\label{eq:GL(2,q^n)-equi}
	 h=\begin{cases}
		\pm k, & t\not\equiv 2\pmod{4};\\
		\ell k, & t\equiv 2\pmod{4},
	\end{cases}
	\end{equation}
	where $\ell\in \F_{q^n}$ satisfies $\ell^{q^2+1}=1$.

	Hence, we only have to consider the existence of invertible matrix 
	$\begin{pmatrix}
	a&b\\c&d
	\end{pmatrix}$ over $\F_{q^n}$ such that for each $x\in \F_{q^n}$ there exists $y\in \F_{q^n}$ satisfying
	\[
		\begin{pmatrix}
			a&b\\c&d
		\end{pmatrix}
		\begin{pmatrix}
			x\\\psi_{h,t}(x)
		\end{pmatrix}=
		\begin{pmatrix}
			y\\\psi_{k,t}(y)
		\end{pmatrix}.
	\]
	This is equivalent to
	\begin{equation}\label{eq:equiv_t>3}
		cx+d\psi_{h,t}(x)=\psi_{k,t}\left(ax+b\psi_{h,t}(x)\right),
	\end{equation}
	for all $x\in \F_{q^n}$.
	The right-hand-side of \eqref{eq:equiv_t>3} is
	\begin{align*}
		&\left(b^q+k^{1-q^{t+1}}b^{q^{t+1}} h^{q^{t+1}-q^2}\right)x^{q^2} + \left(b^{q^{t-1}}h^{q^{t-1}-q^{t-2}} + b^{q^{2t-1}}k^{1-q^{2t-1}}\right)x^{q^{t-2}}+ \\
		+&\left(b^q+b^{q^{t-1}}-h^{q^{t+1}-q^t}k^{1-q^{t+1}}b^{q^{t+1}} -k^{1-q^{2t-1}}h^{q^{2t-1}-q^t}b^{q^{2t-1}}\right)x^{q^t}+\\
		+&\left( b^{q^{t-1}}+k^{1-q^{2t-1}}h^{q^{2t-1}-q^{2t-2}}b^{q^{2t-1}} \right)x^{q^{2t-2}} - \left(h^{q-q^{t+2}}b^q+k^{1-q^{t+1}}b^{q^{t+1}}\right)x^{q^{t+2}}+\\
		+&\left(b^qh^{q-1} - b^{q^{t-1}}h^{q^{t-1}-1}-k^{1-q^{t+1}}b^{q^{t+1}}+k^{1-q^{2t-1}}b^{q^{2t-1}}\right)x+\psi_{k,t}(ax).
	\end{align*}
	
	As $t>4$, it is easy to see that the coefficients of $x^{q^2}$, $x^{q^{t-2}}$, $x^{q^t}$,  $x^{q^{t+2}}$ and $x^{q^{2t-2}}$ in the right-hand-side of \eqref{eq:equiv_t>3} must be $0$. Depending on whether the value of $b$ equals $0$ or not, we separate the proof into two cases.
	
	\textbf{Case 1.} $b\neq 0$. By the coefficient of $x^{q^2}$ (or equivalently, by the coefficient of $x^{q^{t+2}}$), we get
	\begin{equation}\label{eq:equiv_t>3_b1}
		b^{q^{t+1}-q}=-k^{-q-1}h^{q^2+q}.  
	\end{equation}
	Similarly, by the coefficient of $x^{q^{t-2}}$ (or equivalently, by the coefficient of $x^{q^{2t-2}}$), we get
	\begin{equation}\label{eq:equiv_t>3_b2}
	b^{q^{t+1}-q}=-k^{-q+q^2}h^{q-1}.  
	\end{equation}
	By \eqref{eq:equiv_t>3_b1} and \eqref{eq:equiv_t>3_b2}, we obtain
	\[ h^{q^2+1}=k^{q^2+1}.\]
	Let $\ell=h/k$. Then $\ell^{q^2+1}=1$. By the assumption that $h^{q^t+1}=k^{q^t+1}=-1$, we have $\ell^{q^t+1}=1$.  
	
	If $t\not\equiv 2 \pmod{4}$, then $\ell =\pm 1$;  if $t\equiv 2 \pmod{4}$, then we obtain $\ell^{q^2+1}=1$, which implies $\ell\in \F_{q^4}$.
	
	\textbf{Case 2.} $b=0$. If \eqref{eq:equiv_t>3} holds, then $c=0$ and 
	\begin{equation}\label{eq:equiv_t>3_last_cond}
	\begin{cases}
	d=a^q=a^{q^{t-1}}\\
	dh^{1-q^{t+1}} = k^{1-q^{t+1}} a^{q^{t+1}}\\
	dh^{1-q^{2t-1}} =k^{1-q^{2t-1}}a^{q^{2t-1}}.
	\end{cases}	
	\end{equation}
	The first equation in \eqref{eq:equiv_t>3_last_cond} implies that $a \in \F_{q^{\gcd(2t,t-2)}}$ which means $a\in\F_{q^{\gcd(t-2, 4)}}$. Let $\ell=h/k$. The last two equations in \eqref{eq:equiv_t>3_last_cond} become
	\begin{equation}\label{eq:equiv_t>3_lastlast_cond}
	d\ell^{1-q^{t+1}}=a^{q^3}=d\ell^{1-q^{2t-1}}.
	\end{equation}
	Thus $\ell^{q^{2t-1}-q^{t+1}}=1$. This means $\ell \in \F_{q^{\gcd(t-2, 2t)}}=\F_{q^{\gcd(t-2, 4)}}$. By the assumption that $h^{q^t+1}=k^{q^t+1}=-1$, we have $\ell^{q^t+1}=1$. If $\gcd(t-2,4)\in\{1,2\}$, then $\ell =\pm 1$;  if $\gcd(t-2,4)=4$, then we obtain $\ell^{q^2+1}=1$.

	Next, let us handle the case $t\equiv 2\pmod{4}$. By \eqref{eq:equiv_t>3_lastlast_cond}, 
	\[
	a^{q^3-q}=\ell^{1-q^{t+1}}=\ell^{1-q^3}=\ell^{(1-q)(q^2+q+1)}=\ell^{q-q^2}=\ell^{q+1}.
	\]
	For a given $\ell\in \F_{q^4}$, we can always find $a\in \F_{q^4}$ satisfying the above equation. Moreover, it is routine to verify that such $a$ satisfies \eqref{eq:equiv_t>3_last_cond} provided that $h=\ell k$ with $\ell^{q^2+1}=1$; note here that by \eqref{eq:equiv_t>3_last_cond} $d$ depends on $a$. This complete the proof of equivalence between $\cC_{h,t}$ and $\cC_{k,t}$.
	
	Finally, we determine  the automorphism group of $\cC_{h,t}$. By Lemma \ref{le:full_auto_MRD}, we only have to determine all $\sigma\in \Aut(\F_{q^n})$ and all the elements in $\GL(2,q^n)$ mapping $U_h$ to $U_{h^\sigma}$. By (a) and (b), we see that $\sigma$ must satisfy the condition $h^{\sigma}=\pm h$ for $t\not\equiv 2\pmod{4}$, and $ (h^\sigma /h)^{q^2+1}=1$ for $t\equiv 2\pmod{4}$. All such $\sigma$ form a subgroup $H\subseteq\Aut(\F_{q^n})$.
	To accomplish the proof, we  just need to continue the computation of the first part for $k=h$ and determine which matrix $\begin{pmatrix}
		a&b\\c&d
		\end{pmatrix}$ defines an equivalence map from $U_h$ to itself.
Depending on whether $b=0$ or not, we consider two cases.
	
	When $b=0$, we only have to let $k=h$ in \eqref{eq:equiv_t>3_last_cond}. From there we derive that $d=a^q=a^{q^{t-1}}=a^{q^{t+1}}$.  Therefore $a\in \F_{q^{\gcd(t,2)}}^*$.
	
	When $b\neq 0$, by letting $k=h$, we see that the coefficient of $x^{q^t}$ in the right-hand-side of \eqref{eq:equiv_t>3} is
	\[ b^q+b^{q^{t-1}}-h^{1-q^t}b^{q^{t+1}} - h^{1-q^t}b^{q^{2t-1}} \]
	which must be $0$. Plugging \eqref{eq:equiv_t>3_b1} into it and taking into account that $h^{q^t}=-1/h$, we get
	\[b^q(1+h^{q^2-q^t}) + b^{q^{t-1}}(1+h^{1-q^{t-2}})=0.\]
	Raising it to the $q^2$-th power and plugging \eqref{eq:equiv_t>3_b1} again, we obtain
	\[ b^{q^3} (1-h^{q^4+q^2}) - b^q(h^{q^2-1} - h^{2q^2})=0,\]
	which means 
	\[b^q(h^{-1}-h^{q^2}) =\left(b^q(h^{-1}-h^{q^2}) \right)^{q^2}.\]
	 By Proposition \ref{p:hcondition}, $h^{-1}\neq h^{q^2}$. Thus 
	\[b=\frac{-\delta}{h^{-q^{2t-1}}-h^q}=\frac{\delta}{h^{q^{t-1}}+h^q}\]
	for some $\delta\in \F_{q^2}$. Substitute it into \eqref{eq:equiv_t>3_b1},
	\[\frac{\delta^{q^t}}{h^{q^{2t-1}}+h^{q^{t+1}}} - h^{q+q^{t-1}}\frac{\delta}{h^{q^{t-1}}+h^q}=0,\]
	which means
		\[\delta^{q^t}+\delta=0.\]
	When $t$ is even, it means $2\delta=0$. Thus $b=0$.
	
	When $t$ is odd, it implies $\delta^{q}=-\delta$. Plugging this value back into \eqref{eq:equiv_t>3}, we see that the coefficients of $x^{q^2}$, $x^{q^{t-2}}$, $x^{q^t}$,  $x^{q^{t+2}}$ and $x^{q^{2t-2}}$ on the right-hand-side of \eqref{eq:equiv_t>3} are all $0$. Hence, the coefficient of $x$ on the left-hand-side of \eqref{eq:equiv_t>3} is $c$ which is completely determined by $b$, and $a$ can take any value in $\F_q$ when $b\neq 0$. Moreover, the value of $d$ is determined by $a$ which is independent of the value of $b$. One may check directly by computation that they form a cyclic group of order $q^2-1$, or use the fact that the nonzero elements of the right idealizer of an MRD code form the multiplicative group of a finite field (the link between $I_R(\cC_{h,t})$ and this group is already given in Lemma \ref{le:full_auto_MRD}).
\end{proof}

Theorem \ref{t:code_equiv} shows that our construction provides a big family of inequivalent MRD codes.
\begin{corollary}\label{coro:BIG}
	Let $p$ be an odd prime number and let $r,t$ be positive integers with $t>4$ and $q=p^r$. The total number $N$ of inequivalent MRD codes $\cC_{h,t}$ is

		\[ N\geq \begin{cases}
		\left\lfloor\frac{q^t+1}{4rt}\right\rfloor,& \text{ if }t\not\equiv 2\pmod{4};\\[10pt]
		\left\lfloor\frac{q^t+1}{2rt(q^2+1)}\right\rfloor,& \text{ if }t\equiv 2\pmod{4}.
		\end{cases} \]	

\end{corollary}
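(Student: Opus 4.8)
The plan is to recast the equivalence criteria of Theorem~\ref{t:code_equiv} as a group action on the set
$$S=\{h\in\F_{q^n}:h^{q^t+1}=-1\},$$
and then to bound the number of orbits from below by $|S|/|G|$, where $G$ is the acting group. First I would compute $|S|$. Since $q$ is odd, $-1$ is a nonzero element of $\F_{q^t}$, and $h\mapsto h^{q^t+1}=N_{q^{2t}/q^t}(h)$ is precisely the relative norm $\F_{q^n}^*\to\F_{q^t}^*$, a surjective homomorphism of finite abelian groups whose kernel has size $\frac{q^{2t}-1}{q^t-1}=q^t+1$. Every fibre therefore has exactly $q^t+1$ elements, and as $-1$ lies in the image I get $|S|=q^t+1$.

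Next I would identify the acting group in each case. The group $\Aut(\F_{q^n})$ is cyclic of order $2rt$ (as $\F_{q^n}=\F_{p^{2rt}}$), and it preserves $S$ since $(h^\rho)^{q^t+1}=(h^{q^t+1})^\rho=-1$. When $t\not\equiv2\pmod4$, Theorem~\ref{t:code_equiv}(a) says $\cC_{h,t}$ and $\cC_{k,t}$ are equivalent exactly when $h$ and $k$ lie in the same orbit of $G=\{\pm1\}\times\Aut(\F_{q^n})$ acting by $h\mapsto\pm h^\rho$; the sign flip preserves $S$ because $q^t+1$ is even, and it commutes with field automorphisms, so $|G|=4rt$. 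When $t\equiv2\pmod4$, Theorem~\ref{t:code_equiv}(b) shows the relevant group is generated by the multiplications $h\mapsto\ell h$ with $\ell^{q^2+1}=1$ together with $\Aut(\F_{q^n})$. Here I must check that these multiplications preserve $S$, which follows from the divisibility $q^2+1\mid q^t+1$ recorded in Proposition~\ref{p:hcondition}, and that conjugating a multiplication by $\rho$ stays within the multiplications since $\rho(\ell)^{q^2+1}=1$; hence every group element is a multiplication followed by an automorphism, and $|G|\le(q^2+1)\cdot2rt$.

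Finally, the number $N$ of inequivalent codes equals the number of $G$-orbits on $S$, and since each orbit has cardinality at most $|G|$ we obtain $N\ge|S|/|G|$. This yields $N\ge\frac{q^t+1}{4rt}$ when $t\not\equiv2\pmod4$ and $N\ge\frac{q^t+1}{2rt(q^2+1)}$ when $t\equiv2\pmod4$; the integrality of $N$ then gives the stated floor bounds. The only genuinely delicate point I anticipate is verifying that the prescribed maps really act on $S$ (i.e.\ preserve the condition $h^{q^t+1}=-1$), most notably the inclusion for the multiplications when $t\equiv2\pmod4$, which rests on $q^2+1\mid q^t+1$; the remainder is the routine orbit-counting inequality.
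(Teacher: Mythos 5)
Your proposal is correct and follows essentially the same route as the paper: both invoke the equivalence criterion of Theorem \ref{t:code_equiv}, use $|\{h : h^{q^t+1}=-1\}|=q^t+1$, bound each equivalence class by $2\cdot 2rt=4rt$ (resp.\ $(q^2+1)\cdot 2rt$), and divide. Your orbit-counting formulation (and the explicit norm-map computation of $|S|$, which the paper only asserts) is a clean repackaging of the paper's direct count via $\xi_h$ and $|\Aut(\F_{q^n})|=rn$, not a different argument.
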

\begin{proof}
	In the proof of Theorem \ref{t:code_equiv}, we have obtained a necessary and sufficient condition \eqref{eq:GL(2,q^n)-equi} for the $\GL(2,q^n)$-equivalence between $U_{h}$ and $U_{k}$, $n=2t$. 	For a given $h$ satisfying $h^{q^t+1}=-1$, let $\xi_h$ denote the number of $k$ for which $U_k$ is $\GL(2,q^n)$-equivalent to $U_h$. The value of $\xi_h$ is independent of $h$ and
	\[\xi_h=\begin{cases}
		2, & t\not\equiv 2\pmod{4};\\
		q^2+1, & t\equiv 2\pmod{4}.
	\end{cases}\]
	As there are at most $|\Aut(\F_{q^n})|=rn$ different $\rho$ such that $U_h$ is $\GL(2,q^n)$-equivalent to $U_{k^\sigma}$ for a given $h$,  there are at most $nr\xi_h$ choices of $k$ for which $U_{h}$ is $\GaL(2,q^n)$-equivalent to $U_{k}$. Therefore, we have obtained the lower bound for $N$ which concludes the proof.		
\end{proof}

By Corollary \ref{coro:BIG}, we can prove the following result.

\begin{theorem}
	Let $n=2t$ with $t>4$ and let $q$ be an odd prime power. The family of $\F_{q^n}$-MRD codes of minimum distance $n-1$
	$$\cC_{h,t}=\{ ax+b\psi_{h,t}(x): a,b\in \F_{q^n} \},$$
	where $\psi_{h,t}(x)=  x^{q}+x^{q^{t-1}}-h^{1-q^{t+1}}x^{q^{t+1}}+h^{1-q^{2t-1}}x^{q^{2t-1}}\in\F_{q^n}[x]$ and $h$ is any element of $\F_{q^n}$ such that $h^{q^t+1}=-1$, is new.
\end{theorem}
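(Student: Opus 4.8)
The plan is to prove that the family $\{\cC_{h,t}\}$ is new by a counting argument: I will show that it contains strictly more pairwise-inequivalent $\F_{q^n}$-MRD codes than all previously known families of MRD codes arising from scattered polynomials combined, so that at least one $\cC_{h,t}$ is forced to be inequivalent to every known code. Since every code in play is an $\F_{q^n}$-linear MRD code of the form $\cC_f$ with $f$ scattered, hence with left idealizer isomorphic to $\F_{q^n}$, Theorem \ref{t:code_equiv=space_equiv} reduces all the relevant equivalences to $\GaL(2,q^n)$-orbits of the subspaces $U_f$; this is the common ground on which the whole comparison takes place.

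First I would trim the list of competitors. The hypothesis $t>4$, i.e.\ $n=2t>8$, discards every sporadic construction (these exist only for $n\in\{6,8\}$), so the only families to beat are the three infinite ones: the pseudo-regulus family \ref{family:PR}, the Lunardon--Polverino family \ref{family:LP}, and the Longobardi--Zanella family \ref{family:LZ}. To remove the first cheaply I would use the right idealizer as an equivalence invariant. By Lemma \ref{le:full_auto_MRD} the group $I_R(\cC_{h,t})\setminus\{0\}$ corresponds to the $(\F_{q^2}^*,\cdot)$ direct factor of $\Aut(\cC_{h,t})$ exhibited in Theorem \ref{t:code_equiv}, so $I_R(\cC_{h,t})\cong\F_{q^2}$; the pseudo-regulus codes $\{ax+bx^{q^s}\}$ are of (generalized) Gabidulin type and have $I_R\cong\F_{q^n}\neq\F_{q^2}$ for $n>2$. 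As $I_R$ is invariant under equivalence, no $\cC_{h,t}$ is equivalent to a code in \ref{family:PR}.

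It remains to separate the new codes from families \ref{family:LP} and \ref{family:LZ}. Here the right idealizer alone does not suffice, since for even $n$ both of these families also have right idealizer $\F_{q^2}$ (indeed \ref{family:LZ} is exactly the special case $h\in\F_q$ of our construction), so I would count. By Corollary \ref{coro:BIG} the family contains at least $N=\lfloor (q^t+1)/(4rt)\rfloor$ (resp.\ $N=\lfloor (q^t+1)/(2rt(q^2+1))\rfloor$ when $t\equiv 2\pmod 4$) pairwise-inequivalent codes, a quantity growing exponentially in $t$. On the other hand, by the estimates underlying Table \ref{table:numbers} (Section \ref{sec:pre}), families \ref{family:LP} and \ref{family:LZ} together contribute at most $\phi(n)\frac{(q+1)(q-2)}{4}+\frac{\phi(n)}{2}$ inequivalent codes, a polynomial quantity bounded by $q^2\phi(n)/2$. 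At most this many of the $N$ inequivalent $\cC_{h,t}$ can coincide with a known code, so as soon as $N$ exceeds it there is a genuinely new code. Since $q^t$ dominates $q^2\phi(n)\le 2q^2t$, this is immediate for large $t$ and, with the refined counts above, can be pushed down to essentially all $t>4$.

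The main obstacle is making the final comparison uniform over \emph{all} $t>4$ and all odd prime powers $q$: for the very smallest parameters the exponential lower bound of Corollary \ref{coro:BIG} is close to the polynomial count, and the crude bound $q^2\phi(n)/2$ is not always enough (the tightest case being $t=6$, $q=3$). To close these cases I would not use the crude estimate but argue on three fronts: replace it by the sharper per-family counts of Table \ref{table:numbers}; exploit the fact that family \ref{family:LZ} contributes nothing unless its arithmetic conditions on $q\bmod 4$ and on the parity of $t$ hold (so that the competitor count drops precisely where the exponential bound is weakest); and note that the floor in Corollary \ref{coro:BIG} understates the true number of $\GaL(2,q^n)$-classes of admissible $h$, since $q^t+1$ is not in general divisible by the class size $2rt$ (resp.\ $2rt(q^2+1)$), forcing extra classes. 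Together these widen the gap enough to guarantee $N>$ (total known) for every $t>4$, which yields a $\cC_{h,t}$ inequivalent to all previously constructed MRD codes and completes the proof.
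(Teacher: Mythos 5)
Your proposal is, at bottom, the same argument the paper gives: the paper's entire proof of this theorem is a single sentence, namely that comparing Corollary \ref{coro:BIG} with the counts in Table \ref{table:numbers} forces the family to be new. What you add, however, is not cosmetic — taken literally, that crude comparison fails at exactly the parameters you flag as tightest. For $q=3$, $r=1$, $t=6$ (so $t\equiv 2\pmod 4$, $n=12$), Corollary \ref{coro:BIG} gives only $N\geq\left\lfloor 730/120\right\rfloor=6$, while Table \ref{table:numbers} permits up to $\phi(12)/2+\phi(12)(q+1)(q-2)/4+\phi(12)/2=2+4+2=8$ known equivalence classes; since $6<8$, the paper's one-line argument does not settle this case. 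Your refinements close it: (a) since $I_R(\cC_{h,t})\cong \F_{q^2}$ (Lemma \ref{le:full_auto_MRD} together with Theorem \ref{t:code_equiv}, i.e.\ Corollary \ref{coro:idealizer}, whose proof does not depend on the present theorem, so there is no circularity) while the codes of family \ref{family:PR} have right idealizer $\F_{q^n}$, the pseudo-regulus contribution of $\phi(12)/2=2$ is eliminated by an invariant rather than by counting; and (b) each equivalence class of admissible $h$ under Theorem \ref{t:code_equiv}(b) has size at most $(q^2+1)\cdot rn=120$, so the $q^t+1=730$ admissible $h$ fall into at least $\lceil 730/120\rceil=7$ classes, not $6$. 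Then $7>4+2=6$ and the case is done; all other $(q,t)$ with $t>4$, $q$ odd pass with visibly more room, the exponential bound quickly dwarfing the polynomial one. One small correction: your third remedy — that family \ref{family:LZ} is absent unless its arithmetic conditions hold — gives nothing in this worst case, because $t=6$ is even and the Longobardi--Zanella polynomials do exist there; but remedies (a) and (b) suffice on their own. So your proposal is correct, follows the paper's counting strategy, and is in fact more rigorous than the paper's own write-up, which leaves the $(q,t)=(3,6)$ case unjustified.
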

\begin{proof}
	As  $q$ can be any odd prime power and $n$ can be any even integer larger than $6$, by comparing Corollary \ref{coro:BIG} with the numbers of known inequivalent constructions of $\F_{q^n}$-MRD codes of minimum distance $n-1$ in Table \ref{table:numbers}, our family must be new.
\end{proof}

The following result is a direct consequence of Theorem \ref{t:code_equiv} and Lemma \ref{le:full_auto_MRD}.
\begin{corollary}\label{coro:idealizer}
	Let $\cC_{h,t}$ be defined as in \eqref{eq:C_ht}, with $t>4$. Then $I_R(\cC_{h,t})\cong\F_{q^{2}}$.
\end{corollary}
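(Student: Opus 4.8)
The plan is to read off the statement from the two results invoked, namely Lemma \ref{le:full_auto_MRD} and Theorem \ref{t:code_equiv}. Recall that the nonzero elements of the right idealizer of an MRD code form the multiplicative group of a subfield of $\F_{q^n}$; since $\cC_{h,t}$ is $\F_{q^n}$-linear, $\F_q^*$ certainly lies inside $I_R(\cC_{h,t})\setminus\{0\}$, so $I_R(\cC_{h,t})\cong\F_{q^m}$ for some $m\mid n$. To pin down $m$ it suffices to compute the order of this group, and the key reduction is provided by Lemma \ref{le:full_auto_MRD}: the multiplicative group $I_R(\cC_{h,t})\setminus\{0\}$ is isomorphic to the $\GL(2,q^n)$-automorphism group $G_h$ of $U_h=\{(x,\psi_{h,t}(x)):x\in\F_{q^n}\}$, i.e.\ to the group of invertible matrices $\begin{pmatrix}a&b\\c&d\end{pmatrix}$ over $\F_{q^n}$ fixing $U_h$ under the identity field automorphism.

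The next step is to observe that $G_h$ was already determined inside the proof of Theorem \ref{t:code_equiv}, in the final part where the full automorphism group is computed by setting $k=h$ and taking $\rho$ and the Frobenius twist $q^\ell$ to be trivial. There two cases are treated according to the parity of $t$: when $t$ is even one shows that necessarily $b=0$, $c=0$, $a\in\F_{q^2}^*$ and $d=a^q$, so that $G_h$ has exactly $q^2-1$ elements; when $t$ is odd the matrices with $b\neq 0$ are shown to form, together with the $b=0$ ones, a cyclic group of order $q^2-1$. Thus, regardless of the parity of $t$ (and for all $t>4$, so that the coefficient vanishing conditions of \eqref{eq:equiv_t>3} are available), one has $|G_h|=q^2-1$, which is precisely the factor $(\F_{q^2}^*,\cdot)$ appearing in the description of $\Aut(\cC_{h,t})$ in Theorem \ref{t:code_equiv}.

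To conclude, I would combine these two facts: by Lemma \ref{le:full_auto_MRD}, $|I_R(\cC_{h,t})\setminus\{0\}|=|G_h|=q^2-1$, hence $I_R(\cC_{h,t})$ is a subfield of $\F_{q^n}$ of order $q^2$. Since $2\mid n=2t$, the subfield $\F_{q^2}$ does sit inside $\F_{q^n}$, and the unique subfield of that order gives $I_R(\cC_{h,t})\cong\F_{q^2}$, as claimed.

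There is essentially no genuine obstacle here, as all the real work has been carried out in Theorem \ref{t:code_equiv}; the only point requiring a moment of care is the bookkeeping that identifies the $(\F_{q^2}^*,\cdot)$ factor of $\Aut(\cC_{h,t})$ with the image of $G_h$ under the isomorphism of Lemma \ref{le:full_auto_MRD} (rather than with the field-automorphism part $H$ or the left-idealizer part $(\F_{q^n}^*,\cdot)$), together with the verification that both parities of $t$ yield the same order $q^2-1$.
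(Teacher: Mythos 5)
Your proposal is correct and takes essentially the same route as the paper, which offers no separate argument but states the corollary as a direct consequence of Lemma \ref{le:full_auto_MRD} and Theorem \ref{t:code_equiv}: you make that deduction explicit by identifying $I_R(\cC_{h,t})\setminus\{0\}$ with the $\GL(2,q^n)$-automorphism group $G_h$ of $U_h$ via Lemma \ref{le:full_auto_MRD}, reading off $|G_h|=q^2-1$ from the $k=h$ computation (both parities of $t$) at the end of the proof of Theorem \ref{t:code_equiv}, and invoking the fact that the right idealizer of an MRD code is a field. The bookkeeping points you flag (that $G_h$ corresponds to the $(\F_{q^2}^*,\cdot)$ factor of $\Aut(\cC_{h,t})$ and not to $(\F_{q^n}^*,\cdot)$ or $H$) are exactly the content the paper leaves implicit, and you handle them correctly.
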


Finally, let us investigate the adjoint of $\psi_{h,t}$ and the associated MRD code.
\begin{theorem}\label{t:adjoint}
	The $\F_{q^n}$-MRD $\hat{\cC}_{h,t}=\{ ax+b\hat{\psi}_{h,t}(x): a,b\in \F_{q^n} \}$, where $\hat{\psi}_{h,t}$ is the adjoint map of $\psi_{h,t}$ is equivalent to $\cC_{h,t}$.
\end{theorem}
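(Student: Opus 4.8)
The plan is to prove equivalence between $\hat{\cC}_{h,t}$ and $\cC_{h,t}$ by invoking Theorem \ref{t:code_equiv=space_equiv}, which reduces the problem to exhibiting a $\GaL(2,q^n)$-equivalence between the subspaces $U_{\hat{\psi}_{h,t}}$ and $U_{\psi_{h,t}}$. First I would write down $\hat{\psi}_{h,t}(x)$ explicitly using the formula for the adjoint map. Starting from
\[ \psi_{h,t}(x)=x^{q}+x^{q^{t-1}}-h^{1-q^{t+1}}x^{q^{t+1}}+h^{1-q^{2t-1}}x^{q^{2t-1}}, \]
the adjoint $\hat{f}(x)=a_0x+\sum_{i=1}^{n-1}a_i^{q^{n-i}}x^{q^{n-i}}$ sends each term $c\,x^{q^i}$ to $c^{q^{n-i}}x^{q^{n-i}}$. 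This gives
\[ \hat{\psi}_{h,t}(x)=x^{q^{2t-1}}+x^{q^{t+1}}-h^{(1-q^{t+1})q^{t-1}}x^{q^{t-1}}+h^{(1-q^{2t-1})q}x^{q}, \]
which I would simplify using $h^{q^t+1}=-1$ (so $h^{q^t}=-h^{-1}$) to get the exponents of $h$ into a clean form matching a member of the same family, up to an overall scalar.

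Next I would try to identify $\hat{\psi}_{h,t}$ (up to the natural symmetries) with $\psi_{k,t}$ for an explicit $k$, or directly find a $\GL(2,q^n)$-matrix conjugating $U_{\hat{\psi}_{h,t}}$ to $U_{\psi_{h,t}}$. The most economical route is to observe that $\hat{\psi}_{h,t}(x)$ should equal $\beta\,\psi_{k,t}(\alpha x)$ for suitable $\alpha,\beta\in\F_{q^n}^*$ and some admissible $k$ with $k^{q^t+1}=-1$; a scalar substitution $x\mapsto \alpha x$ followed by multiplication by $\beta$ corresponds precisely to a diagonal element of $\GL(2,q^n)$ acting on $U_f$. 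Matching the coefficients of $x^q$, $x^{q^{t-1}}$, $x^{q^{t+1}}$, $x^{q^{2t-1}}$ in $\beta\psi_{k,t}(\alpha x)$ against those of $\hat{\psi}_{h,t}$ yields a small system in $\alpha,\beta,k$; using $h^{q^t+1}=-1$ I expect $k$ to come out as a simple power or Frobenius-twist of $h$ (plausibly $k=\pm h^{q^j}$ or $k^{q^2+1}=h^{q^2+1}$-type relation), which by Theorem \ref{t:code_equiv} already guarantees $\cC_{k,t}\cong\cC_{h,t}$, closing the argument.

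The step I expect to be the main obstacle is the coefficient-matching bookkeeping: the exponents of $h$ arising in $\hat{\psi}_{h,t}$ involve products like $(1-q^{2t-1})q$ and $(1-q^{t+1})q^{t-1}$, and reducing them modulo $q^{2t}-1$ while repeatedly applying $h^{q^t+1}=-1$ is error-prone. I would organize this by reducing every $h$-exponent to a canonical representative and checking consistency of the two coefficient equations (the ratio of the $x^q$ and $x^{q^{2t-1}}$ coefficients must match the ratio forced by the $x^{q^{t-1}}$ and $x^{q^{t+1}}$ coefficients), which is exactly where a genuine constraint on $k$ and $\alpha$ emerges. Once the candidate $(\alpha,\beta,k)$ is found and verified to satisfy $k^{q^t+1}=-1$, the equivalence $\hat{\cC}_{h,t}\cong\cC_{k,t}\cong\cC_{h,t}$ follows from Theorem \ref{t:code_equiv=space_equiv} and Theorem \ref{t:code_equiv}, completing the proof.
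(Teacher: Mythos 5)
Your explicit formula for the adjoint is correct: reducing exponents modulo $q^{2t}-1$ it reads $\hat{\psi}_{h,t}(x)=h^{q-1}x^{q}-h^{q^{t-1}-1}x^{q^{t-1}}+x^{q^{t+1}}+x^{q^{2t-1}}$. The gap is in your key step: the diagonal ansatz $\hat{\psi}_{h,t}(x)=\beta\,\psi_{k,t}(\alpha x)$ has no solution in general, for \emph{any} admissible $k$. Under such a substitution the coefficient of $x^{q^i}$ gets multiplied by $\beta\alpha^{q^i}$, so matching the coefficients of $x^{q}$ and $x^{q^{t-1}}$ (both equal to $1$ in $\psi_{k,t}$, but equal to $h^{q-1}$ and $-h^{q^{t-1}-1}$ in $\hat{\psi}_{h,t}$) forces $\beta\alpha^{q}=h^{q-1}$ and $\beta\alpha^{q^{t-1}}=-h^{q^{t-1}-1}$, hence $(\alpha/h)^{q^{t-1}-q}=-1$ and therefore $(\alpha/h)^{q^{t-2}-1}=-1$. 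These two equations do not involve $k$, so no choice of $k$ can rescue them. But when $t\equiv 2\pmod 4$ the equation $x^{q^{t-2}-1}=-1$ has no solution in $\F_{q^{2t}}$: here $\gcd(q^{t-2}-1,q^{2t}-1)=q^{4}-1$ and $(q^{2t}-1)/(q^{4}-1)$ is a sum of $t/2$ odd terms with $t/2$ odd, hence odd, so $-1$ is not a $(q^{t-2}-1)$-st power. This is exactly the fact the paper itself exploits in Case 3.1 of the proof of Theorem \ref{t:main}. Allowing a semilinear twist only replaces $h$ by $h^{\sigma}$ and leaves the obstruction intact. So your plan collapses precisely when $t\equiv 2\pmod 4$; moreover, even for $t\not\equiv 2\pmod 4$ you never verify that the remaining two coefficient equations are compatible, and the closing appeal to Theorem \ref{t:code_equiv} would in any case restrict you to $t>4$, while the statement is not so restricted.

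The paper's proof uses a matrix of a genuinely different shape: an \emph{anti-diagonal} one. It first replaces $\hat{\psi}_{h,t}$ by the diagonally equivalent polynomial $g(x):=h\hat{\psi}_{h,t}(x/h)=x^{q}-x^{q^{t-1}}+h^{1-q^{t+1}}x^{q^{t+1}}+h^{1-q^{2t-1}}x^{q^{2t-1}}$, and then takes $a=d=0$ in \eqref{eq:equiv_t>3_adjoint}, i.e.\ it seeks nonzero $b,c$ with $cx=\psi_{h,t}\left(bg(x)\right)$; in other words, up to scalars $bg$ is a compositional inverse of $\psi_{h,t}$, which corresponds to an equivalence swapping the two coordinates of $U_{g}$. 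Choosing $b=\left(\frac{zh}{h^{q^2+1}-1}\right)^{q^{2t-1}}$ with $z\in\F_q^{*}$ (well defined since $h^{q^2+1}\neq 1$ by Proposition \ref{p:hcondition}) yields the identity $b^{q^{t+1}}=h^{q^2-1}b^{q}$, which kills all coefficients of $x^{q^2}$, $x^{q^{t-2}}$, $x^{q^{t+2}}$, $x^{q^{2t-2}}$ and $x^{q^{t}}$ in $\psi_{h,t}(bg(x))$, leaving only a multiple of $x$. If you want to repair your argument, you must abandon the diagonal ansatz and look for an anti-diagonal (coordinate-swapping) equivalence of this kind; no purely diagonal identification of $\hat{\psi}_{h,t}$ with a member of the family can exist in the case $t\equiv 2\pmod 4$.
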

\begin{proof}
	Consider the polynomial
	\[g(x):=h\hat{\psi}_{h,t}(x/h) =x^q - x^{q^{t-1}} + h^{1-q^{t+1}} x^{q^{t+1}}+ h^{1-q^{2t-1}}x^{q^{2t-1}},\]
	we investigate the equivalence between $\cC_{g}$ and $\cC_{h,t}$.
	
	To prove that ${\cC}_{g}$ is equivalent to $\cC_{h,t}$,  as in the proof of Theorem \ref{t:code_equiv}, we only have to consider the existence of invertible matrix 
	$\begin{pmatrix}
	a&b\\c&d
	\end{pmatrix}$ over $\F_{q^n}$ such that for each $x\in \F_{q^n}$ there exists $y\in \F_{q^n}$ satisfying
	\[
	\begin{pmatrix}
	a&b\\c&d
	\end{pmatrix}
	\begin{pmatrix}
	x\\ g(x)
	\end{pmatrix}=
	\begin{pmatrix}
	y\\\psi_{h,t}(y)
	\end{pmatrix}.
	\]
	This is equivalent to
	\begin{equation}\label{eq:equiv_t>3_adjoint}
	cx+d g(x)=\psi_{h,t}\left(ax+bg(x)\right),
	\end{equation}
	for all $x\in \F_{q^n}$.
	
	Let $a=d=0$. We only have to check whether there exist nonzero $b$ and $c$ such that 
	\[ cx = \psi_{h,t}(bg(x)).\]
	The right-hand-side of it is
	\begin{align*}
	&\left(b^q - h^{1-q^{2}}b^{q^{t+1}}\right)x^{q^2} + \left(b^{q^{t-1}}h^{q^{t-1}-q^{t-2}} - b^{q^{2t-1}}h^{1-q^{2t-1}}\right)x^{q^{t-2}} +\\
	+&\left(-b^q+b^{q^{t-1}}-h^{1-q^t}b^{q^{t+1}}  + h^{1-q^{t}}b^{q^{2t-1}}\right)x^{q^t}+\\
	+&\left(- b^{q^{t-1}}+h^{1-q^{2t-2}}b^{q^{2t-1}} \right)x^{q^{2t-2}} + \left(h^{q-q^{t+2}}b^q-h^{1-q^{t+1}}b^{q^{t+1}}\right)x^{q^{t+2}}+\\
	+&\left(b^qh^{q-1} + b^{q^{t-1}}h^{q^{t-1}-1}  + h^{1-q^{t+1}}b^{q^{t+1}}+h^{1-q^{2t-1}}b^{q^{2t-1}}\right)x.
	\end{align*}
	
	Choose $z\in\F_q$ and let $b=\left(\frac{zh}{h^{q^2+1}-1}\right)^{q^{2t-1}}$. Since $z^{q^t-1}=1$, we get
	\[
	(b^q)^{q^{t}-1} = \frac{z^{q^t-1} h^{q^t-1}(h^{q^2+1}-1)}{h^{q^{t+2}+q^t}-1}= z^{q^t-1} h^{q^t+q^2}= h^{q^2-1},
	\]	
	which means $b^{q^{t+1}} = h^{q^2-1}b^q$.
	
	It is easy to verify that  the coefficients of the terms of degree $q^2$, $q^{t-2}$, $q^{2t-2}$ and $q^{t+2}$ in $\psi_{h,t}(bg(x))$ all equal $0$. 
	
	The computation of the coefficient of $x^{q^t}$ is a bit more complicated. It equals
	\begin{align*}
	&-b^q+b^{q^{t-1}}-h^{1-q^t}b^{q^{t+1}}  + h^{1-q^{t}}b^{q^{2t-1}}=\\
	=&-b^q+b^{q^{t-1}}+h^{2}b^{q^{t+1}}  - h^{2}b^{q^{2t-1}}=\\
	=&-b^q+b^{q^{t-1}}+h^{q^2+1}b^{q}  - h^{q^{2t-2}+1}b^{q^{t-1}}=\\
	=& b^q(h^{q^2+1}-1) - b^{q^{t-1}}(h^{-q^{2}-1}-1)^{q^{t-2}}=\\
	=& b^q(h^{q^2+1}-1) - b^{q^{t-1}}\left(h^{q^{2}+1}-1\right)^{q^{t-2}} h^{1-q^{t-2}}.
	\end{align*}
	It equals $0$ if and only if 
	\[ \frac{b^q}{h}(h^{q^2+1}-1) = \left( \frac{b^q}{h}(h^{q^2+1}-1) \right)^{q^{t-2}}, \]
	which holds because $\frac{b^q}{h}(h^{q^2+1}-1)=z \in \F_q\subseteq\F_{q^{t-2}}\cap \F_{q^{2t}}$. 
	
	Therefore, we have shown that there exist $c$ and $b$ such that $cx=\psi_{h,t}(bg(x))$, which means that $\cC_g$ and $\cC_{k,t}$ are equivalent.
\end{proof}

\section{Equivalence of the associated linear sets}\label{sec:PGaL_equivalence}
Let $\psi_{h,t}$ be the scattered polynomial over $\F_{q^n}$, $n=2t$, defined in Theorem \ref{t:main}. Let 
\begin{equation}\label{familyofL}
    L_{h,t} := \left\{ \langle (x,\psi_{h,t}(x)) \rangle_{\F_{q^n}}: x\in \F^*_{q^n}\right\},
\end{equation}
which is a maximum scattered linear set of $\PG(1,q^n)$.

In this part,  we consider the $\PGaL$-equivalence between $L_{h,t}$ and $L_{k,t}$. Our main result shows that there is a large number of inequivalent maximum scattered linear sets associated with our family of scattered linear sets.
\begin{theorem}\label{t:PGaL-equivalence}
	Let $p$ be an odd prime number and let $r,t$ be positive integers with $t>4$ and $q=p^r$. The total number $M$ of inequivalent maximum scattered linear sets $L_{h,t}$ of $\PG(1,q^n)$, $n=2t$, satisfies
	\[ M\geq\begin{cases}
	\left\lfloor\frac{q^t+1}{8rt}\right\rfloor,& \text{ if }t\not\equiv 2\pmod{4};\\[10pt]
	\left\lfloor\frac{q^t+1}{4rt(q^2+1)}\right\rfloor,& \text{ if }t\equiv 2\pmod{4}.
	\end{cases} \]
\end{theorem}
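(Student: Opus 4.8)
The plan is to count directly over the $q^t+1$ admissible parameters $h\in\Fqn$ with $h^{q^t+1}=-1$, bounding from above the number of such $h$ lying in a single $\PGaL(2,q^n)$-class of linear sets, and then dividing, exactly as in the proof of Corollary \ref{coro:BIG}. There the key quantity is $\xi_h$, the number of $k$ for which $U_h$ and $U_k$ are $\GL(2,q^n)$-equivalent; by Theorem \ref{t:code_equiv} one has $\xi_h=2$ for $t\not\equiv 2\pmod4$ and $\xi_h=q^2+1$ for $t\equiv 2\pmod4$, and incorporating the $nr=2rt$ field automorphisms each $\GaL$-class of parameters has size at most $nr\,\xi_h=2rt\,\xi_h$. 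The whole point is therefore to show that a single $\PGaL$-class of linear sets is a union of at most two such $\GaL$-classes of parameters, which will double the denominators $4rt$ and $2rt(q^2+1)$ of Corollary \ref{coro:BIG} into $8rt$ and $4rt(q^2+1)$.

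The decisive reduction I would prove is: if $L_{h,t}$ and $L_{k,t}$ are $\PGaL$-equivalent, then $U_h$ is $\GaL(2,q^n)$-equivalent either to $U_k$ or to the subspace $U_{\hat\psi_{k,t}}$ associated with the adjoint. To establish it, lift a witnessing collineation to $\phi\in\GaL(2,q^n)$; after a standard normalization (composing with a suitable element fixing $L_{k,t}$ so that the image avoids the direction $\langle(0,1)\rangle_{\Fqn}$) we may assume $\phi(U_h)$ is the graph $U_g$ of a scattered $q$-polynomial $g=\sum_i\beta_i x^{q^i}$ with $L_g=L_{k,t}$. I would then classify all $q$-polynomials defining the same linear set as $\psi_{k,t}$ by feeding both into Lemma \ref{le:jcta2018}: the relations $\beta_j\beta_{n-j}^{q^j}=\alpha_j\alpha_{n-j}^{q^j}$ together with the cubic identities force the support of $g$ into the exponent set $\{q,q^{t-1},q^{t+1},q^{2t-1}\}$ and pin down its coefficients to coincide with those of $\psi_{k,t}$ or of $\hat\psi_{k,t}$, up to the scalings absorbed by $\GaL$. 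Hence $\phi(U_h)$, and so $U_h$, is $\GaL$-equivalent to $U_k$ or to $U_{\hat\psi_{k,t}}$.

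Granting this, by Theorem \ref{t:code_equiv=space_equiv} the two alternatives read ``$\cC_{h,t}$ equivalent to $\cC_{k,t}$'' and ``$\cC_{h,t}$ equivalent to $\hat\cC_{k,t}$'', and in each case the admissible $k$ are governed by Theorem \ref{t:code_equiv} (together with Theorem \ref{t:adjoint} for the second). Without even deciding whether these two classes coincide, each $\PGaL$-class therefore contains at most $2\cdot nr\,\xi_h=4rt\,\xi_h$ parameters $h$, that is at most $8rt$ when $t\not\equiv 2\pmod4$ and at most $4rt(q^2+1)$ when $t\equiv 2\pmod4$. Dividing the total number $q^t+1$ of admissible parameters by these bounds and taking floors yields $M\ge\lfloor(q^t+1)/(8rt)\rfloor$ and $M\ge\lfloor(q^t+1)/(4rt(q^2+1))\rfloor$, as claimed.

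The main obstacle is the classification in the second paragraph: controlling the $\Gamma L$-class of $L_{k,t}$, i.e.\ ruling out exotic $q$-polynomials $g$ with $L_g=L_{k,t}$ that are $\GaL$-inequivalent to both $\psi_{k,t}$ and $\hat\psi_{k,t}$. The delicate bookkeeping lies in the field automorphism $\sigma$ carried by $\phi$ and in the scalar ambiguities of the coefficients; the cubic relations of Lemma \ref{le:jcta2018}, rather than the quadratic ones, are what must be exploited to eliminate the spurious cases, and the semilinear twist is the part most likely to spawn extra subcases requiring individual treatment.
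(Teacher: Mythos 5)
Your global counting scheme (bound the number of admissible parameters $h$ falling in one $\PGaL$-class by two $\GaL$-classes' worth, then divide $q^t+1$ by $2nr\xi_h$) is exactly the paper's, but the pivot on which you hang it --- the classification of \emph{all} $q$-polynomials $g$ with $L_g=L_{k,t}$ --- is a genuine gap, and it cannot be closed by the means you indicate. Lemma \ref{le:jcta2018} does not force the support of $g$ into $\{q,q^{t-1},q^{t+1},q^{2t-1}\}$: for an index $j$ outside the support of $\psi_{k,t}$ its quadratic relations only say $\beta_j\beta_{2t-j}^{q^j}=0$, and the cubic ones only link neighbouring pairs; the strongest conclusion of this type the paper itself manages to extract is \eqref{eq:gamma_jneq0}, namely $\gamma_j\neq 0\Rightarrow \gamma_{2t-j}=\gamma_{2t-j+1}=\gamma_{2t-j-1}=0$, which is far from $\gamma_j=0$. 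A concrete warning that no formal consequence of Lemma \ref{le:jcta2018} can do what you want: $f=x^q$ and $g=x^{q^s}$, $\gcd(s,n)=1$, satisfy every relation of that lemma and define the same pseudo-regulus linear set, yet have disjoint supports and their graphs $U_{x^{q^s}}$ fall into $\phi(n)/2$ distinct $\GaL$-classes (Table \ref{table:numbers}); so ``same linear set'' by itself pins down neither supports nor a bound of $2$ on the number of $\GaL$-classes. Determining the $\GaL$-class of $L_{h,t}$ is precisely the hard problem the paper goes out of its way to avoid. Note also that your dichotomy is \emph{stronger} than anything the paper proves: since $U_{\hat{\psi}_{k,t}}$ is $\GaL$-equivalent to $U_{\psi_{k,t}}$ (Theorem \ref{t:adjoint} combined with Theorem \ref{t:code_equiv=space_equiv}), your two alternatives collapse into one, and you would obtain the bound with denominators $4rt$, resp.\ $2rt(q^2+1)$ --- twice the theorem's. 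That the authors claim only the halved bound is a strong hint that this dichotomy is out of reach of these techniques.

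What the paper does instead avoids any classification. It writes the $\PGaL$-equivalence as \eqref{eq:LS_f-g_equiv} with $g$ replaced by $\psi_{k^\sigma,t}$ (the family is closed under field automorphisms) and splits on $b$. For $b=0$, Lemma \ref{le:LS_f-g_equiv_case1} yields $\GaL$-equivalence of $U_h$ and $U_{k^\sigma}$. For $b\neq 0$ the relevant object is not an arbitrary polynomial with the same linear set, but the \emph{compositional inverse} $\bar{g}$ of $x\mapsto ax+\psi_{k^\sigma,t}(x)$; Lemma \ref{le:LS_f-g_equiv_case2} exploits, besides Lemma \ref{le:jcta2018}, the functional equation $\bar{g}(ax+g(x))=x$ --- an input your argument does not have --- to force $a=d=0$. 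Then comes the decisive trick you are missing: if two parameters $k,\tilde{k}$ both arise in the $b\neq 0$ case, the two inversion relations can be composed so that the inversions cancel, producing a $b=0$-type relation between $\psi_{k^\sigma,t}$ and $\psi_{\tilde{k},t}$; Lemma \ref{le:LS_f-g_equiv_case1} then shows that all parameters arising in this case form a \emph{single} $\GaL$-class, without ever identifying which class it is (in particular, whether it coincides with that of $U_h$ is left open). This yields ``at most two $\GaL$-classes of parameters per $\PGaL$-class'' with no need to control exotic polynomials defining $L_{k,t}$.
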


To prove Theorem \ref{t:PGaL-equivalence}, we first restrict to the equivalence of linear sets under $\PGL(2,q^n)$ and consider two cases which will be handled in Lemma \ref{le:LS_f-g_equiv_case1} and Lemma \ref{le:LS_f-g_equiv_case2}, respectively. Then we will consider the $\PGaL(2,q^n)$-equivalence and present the proof of Theorem \ref{t:PGaL-equivalence}.

\medskip

Let $f(x)=\sum_{i=0}^{n-1} \alpha_i x^{q^i}$ and $g(x)=\sum_{i=0}^{n-1} \beta_i x^{q^i}$ be two scattered polynomials over $\F_{q^n}$ with $\alpha_0=\beta_0=0$. The associated linear sets $L_f$ and $L_g$ are $\PGL(2,q^n)$-equivalent if and only if there exists an invertible matrix 
$\begin{pmatrix}
	a & b\\
	c & d
\end{pmatrix}$ over $\F_{q^n}$ such that
\begin{equation}\label{eq:LS_f-g_equiv}
	 \left\{\frac{f(x)}{x}:x\in \F_{q^n}^*  \right\} =\left\{\frac{cx+dg(x)}{ax+bg(x)}:x\in \F_{q^n}^*  \right\}.
\end{equation} 
Depending on whether the value of $b$ equals $0$ or not, we may consider the equivalence in two cases.

If $b=0$, then we can assume that $a=1$, and \eqref{eq:LS_f-g_equiv} becomes
\[ \left\{  \sum_{i=0}^{n-1} \alpha_i x^{q^i-1}:x\in \F_{q^n}^* \right\}  = \left\{c+d\sum_{i=0}^{n-1} \beta_i x^{q^i-1} :x\in \F_{q^n}^*  \right\}. \]

As $\alpha_0=\beta_0=0$, by Lemma 3.6 in \cite{csajbok_classes_2018}, $c$ must be $0$. Hence
\begin{equation}\label{eq:LS_f-g_equiv_case1}
	\left\{  \sum_{i=1}^{n-1} \alpha_i x^{q^i-1}:x\in \F_{q^n}^* \right\}  = \left\{d\sum_{i=1}^{n-1} \beta_i x^{q^i-1} :x\in \F_{q^n}^*  \right\}.
\end{equation}
\begin{lemma}\label{le:LS_f-g_equiv_case1}
	Suppose $f=\psi_{h,t}$, $g=\psi_{k,t}$ with $t > 4$. If there exists $d$ such that \eqref{eq:LS_f-g_equiv_case1} holds, then $(h/k)^{q^2+1}=1$ which means $U_h$ is $\GaL(2,q^n)$-equivalent to $U_k$.
\end{lemma}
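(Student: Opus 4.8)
The plan is to read \eqref{eq:LS_f-g_equiv_case1} as an equality of linear sets and then push the coefficients through Lemma \ref{le:jcta2018}. Since $d\psi_{k,t}$ is again a scattered $q$-polynomial (scaling the graph by $\mathrm{diag}(1,d)$ does not change whether $g(x)/x$ is $(q-1)$-to-one), the right-hand side of \eqref{eq:LS_f-g_equiv_case1} is exactly $\{(d\psi_{k,t})(x)/x : x\in\F_{q^n}^*\}$, so the hypothesis is equivalent to $L_{\psi_{h,t}}=L_{d\psi_{k,t}}$. I would then apply Lemma \ref{le:jcta2018} to the pair $f=\psi_{h,t}$ and $g=d\psi_{k,t}$, whose nonzero coefficients both sit at the exponents $\{1,\,t-1,\,t+1,\,2t-1\}$.

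First I would record which conditions of Lemma \ref{le:jcta2018} actually carry information. Because $t>4$, the index $n=2t$ is large enough that in each cubic identity $\alpha_1\alpha_{k-1}^q\alpha_{n-k}^{q^k}+\alpha_k\alpha_{n-1}^q\alpha_{n-k+1}^{q^k}=\cdots$ every summand vanishes (no admissible $k$ makes two of the required exponents lie simultaneously in the support), so only the quadratic conditions $\alpha_k\alpha_{n-k}^{q^k}=\beta_k\beta_{n-k}^{q^k}$ survive. Setting $\ell=h/k$ and reducing exponents via $q^{2t}\equiv 1$, these collapse to relations of the shape $\ell^{q-1}=d^{q+1}$, $\ell^{q^{t-1}-1}=d^{q^{t-1}+1}$ and $\ell^{1-q^{t+1}}=d^{q^{t+1}+1}$ (the remaining one being equivalent to the first). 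I would also note the free input $h^{q^t+1}=k^{q^t+1}=-1$, which gives $\ell^{q^t+1}=1$, so that $q^t\equiv -1$ may be used to simplify the exponents of $\ell$.

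Next I would eliminate $d$. Combining any two relations and reducing the resulting pure-$\ell$ exponent modulo $q^t+1$ collapses everything to $\ell^{2(q^2+1)}=1$; the determinant of the $2\times 2$ exponent system is even, so this factor of $2$ is intrinsic to the quadratic nature of Lemma \ref{le:jcta2018}. Hence $\mathrm{ord}(\ell)$ divides $\gcd\!\big(2(q^2+1),\,q^t+1\big)$, and I would evaluate this gcd with the computations in Proposition \ref{p:hcondition}. For $t\equiv 2\pmod 4$ one has $q^2+1\mid q^t+1$ with odd quotient, so $\mathrm{ord}(\ell)\mid q^2+1$ and $\ell^{q^2+1}=1$; for $t\equiv 0\pmod 4$, and for $t$ odd with $q\equiv 1\pmod 4$, the gcd equals $2$, so $\ell=\pm 1$ and again $\ell^{q^2+1}=1$ (as $q^2+1$ is even).

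The one genuinely delicate case, and the step I expect to be the main obstacle, is $t$ odd with $q\equiv 3\pmod 4$: here $\gcd\!\big(2(q^2+1),q^t+1\big)=4$, so a priori $\ell$ could be a primitive fourth root of unity with $\ell^{q^2+1}=-1$. To exclude this I would argue $2$-adically using two relations at once. If $\mathrm{ord}(\ell)=4$, then $\ell^{q-1}=-1$ (since $(q-1)/2$ is odd), so $\ell^{q-1}=d^{q+1}$ forces $d^{q+1}=-1$, whence $v_2(\mathrm{ord}(d))=v_2(q+1)+1\ge 3$; but $t-1$ is even gives $\ell^{q^{t-1}-1}=1$, so the second relation yields $d^{q^{t-1}+1}=1$ with $v_2(q^{t-1}+1)=1$, forcing $v_2(\mathrm{ord}(d))\le 1$, a contradiction. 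This rules out $\mathrm{ord}(\ell)=4$ and leaves $\ell^{q^2+1}=1$ in every case. Finally, $\ell^{q^2+1}=1$ is precisely the $\GL(2,q^n)$-equivalence criterion \eqref{eq:GL(2,q^n)-equi}, so $U_h$ and $U_k$ are $\GL(2,q^n)$- (hence $\GaL(2,q^n)$-) equivalent by Theorem \ref{t:code_equiv}, which completes the proof.
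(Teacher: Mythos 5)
Your proof is correct, and it follows the same skeleton as the paper's: read \eqref{eq:LS_f-g_equiv_case1} as $L_{\psi_{h,t}}=L_{d\psi_{k,t}}$, push the coefficients through Lemma \ref{le:jcta2018} to obtain the two relations $\ell^{q-1}=d^{q+1}$ and $\ell^{q^{t-1}-1}=d^{q^{t-1}+1}$ with $\ell=h/k$ (your third relation, and the one at index $2t-1$, are just $q$-power twists of these two), eliminate $d$, and finish using $\ell^{q^t+1}=1$. Where you genuinely diverge is the elimination step. You cross-raise the two relations to the powers $q^{t-1}+1$ and $q+1$, which kills $d$ but only yields $\ell^{2(q^{t-2}-1)}=1$, hence $\ell^{2(q^2+1)}=1$; the spurious factor $2$ then forces the case analysis on $t\bmod 4$ and $q\bmod 4$, including the delicate $2$-adic argument on $\mathrm{ord}(d)$ needed to exclude $\mathrm{ord}(\ell)=4$ when $t$ is odd and $q\equiv 3\pmod 4$ (that argument is valid: $d^{q+1}=-1$ forces $v_2(\mathrm{ord}(d))=v_2(q+1)+1\geq 3$, while $d^{q^{t-1}+1}=1$ forces $v_2(\mathrm{ord}(d))\leq 1$). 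The paper instead eliminates at the level of $d$: substituting the first relation into the second gives $d^{2q(q^{t-3}+\cdots+1)}=1$, and since $\gcd(q,\mathrm{ord}(d))=1$ and $q$ is odd one gets $d^{q^{t-2}-1}=\left(d^{2(q^{t-3}+\cdots+1)}\right)^{(q-1)/2}=1$ exactly, with no factor of $2$; this yields $\ell^{q^{t-1}-1}=d^{q^{t-1}+1}=d^{q+1}=\ell^{q-1}$, i.e.\ $\ell^{q^{t-2}}=\ell$, and then $\ell^{q^t}=\ell^{-1}$ immediately gives $\ell^{q^2+1}=1$ uniformly in $t$ and $q$. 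So the paper's route buys a short, case-free computation, while yours, though longer, isolates exactly where the factor $2$ threatens the conclusion and why it is harmless; your preliminary observation that the cubic identities of Lemma \ref{le:jcta2018} are vacuous for this pair when $t>4$ is also correct, and is left implicit in the paper.
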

\begin{proof}
	By Lemma \ref{le:jcta2018},
	\[\alpha_j\alpha_{n-j}^{q^j} = d^{q^j+1}\beta_k\beta_{n-j}^{q^j}\]
	for $j=1,2,\cdots, n-1$. Plugging the coefficients of $\psi_{h,t}$ and $\psi_{k,t}$, we get
	\[ (h^{1-q^{2t-1}})^q=d^{q+1} (k^{1-q^{2t-1}})^q \text{ and }  (-h^{1-q^{t+1}})^{q^{t-1}}=d^{q^{t-1}+1} (-k^{1-q^{t+1}})^{q^{t-1}}.\]
	By setting $\ell= h/k$, we get
	\begin{equation}\label{eq:lb_and_lb}
		\begin{cases}
			\ell^{q-1}=d^{q+1},\\
			\ell^{q^{t-1}-1} = d^{q^{t-1}+1}.
		\end{cases}
	\end{equation}
	From \eqref{eq:lb_and_lb}, we derive 
	\[ d^{(q+1)(q^{t-2}+q^{t-3}+\cdots+1)} = \ell^{q^{t-1}-1 }= d^{q^{t-1}+1},\]
	which means $d^{2q(q^{t-3}+\cdots+1)}=1$. Hence $d^{q^{t-2}-1}= d^{2(q^{t-3}+\cdots+1)\cdot \frac{q-1}{2}}=1$. It follows that $\ell^{q^{t-1}-1} =d^{q^{t-1}+1}=d^{q+1}=\ell^{q-1}$, whence $\ell^{q^{t-2}}=\ell$. 
	
	As $h^{q^t+1}=k^{q^t+1}=-1$, $\ell^{q^t}=1/\ell$. By $\ell^{q^{t-2}}=\ell$, we get $\ell^{q^2+1}=1$. By Theorem \ref{t:code_equiv}, $U_h$ is $\GaL(2,q^n)$-equivalent to $U_k$.
\end{proof}

Next we consider the case $b\neq 0$. Without loss of generality, we assume that $b=1$.  Then
\[ \frac{cx+dg(x)}{ax+g(x)} = \frac{(c-da)x+d(ax+g(x))}{ax+g(x)}=\frac{\bar{c}x}{ax+g(x)}+d,\]
for any $x\in \F_{q^n}^*$, where $\bar{c}=c-da\neq 0$. Noting that $ax+g(x)=0$ must have no nonzero solution, \[\frac{\bar{c}x}{ax+g(x)}+d=\frac{\bar{c}\bar{g}(y)}{y}+d, \]
where $\bar{g}(y)=\sum_{i=0}^{n-1}\gamma_i y^{q^i}$ is the inverse of the map $x\mapsto ax+g(x)$. 

Furthermore, \eqref{eq:LS_f-g_equiv} becomes
\begin{equation*}
		\left\{  \sum_{i=1}^{n-1} \alpha_i x^{q^i-1}:x\in \F_{q^n}^* \right\}  = \left\{\bar{c}\sum_{i=1}^{n-1}\gamma_i x^{q^i-1}+d+\bar{c}\gamma_0 :x\in \F_{q^n}^*  \right\}.
\end{equation*}
By Lemma \ref{le:jcta2018}, 
\begin{equation}\label{eq:d+cgamma_0=0}
	d+\bar{c}\gamma_0=0.	
\end{equation}
Thus
\begin{equation}\label{eq:LS_f-g_equiv_case2}
	\left\{  \frac{1}{\bar{c}} \sum_{i=1}^{n-1} \alpha_i x^{q^i-1}:x\in \F_{q^n}^* \right\}  = \left\{\sum_{i=1}^{n-1}\gamma_i x^{q^i-1} :x\in \F_{q^n}^*  \right\}.
\end{equation}

\begin{lemma}\label{le:LS_f-g_equiv_case2}
	Let $f=\psi_{h,t}$, $g=\psi_{k,t}$ with $t >4$. Suppose that  there exist $a$, $c$ and $d$ such that \eqref{eq:LS_f-g_equiv_case2} holds.
	\begin{enumerate}[label=(\alph*)]
		\item If $t$ is even, then $a$ must be $0$.
		\item If $t$ is odd and $a\neq 0$, then $f=g$.
	\end{enumerate}
	In particular, when $a=0$, $\gamma_0=d=0$.
\end{lemma}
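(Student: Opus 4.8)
The plan is to apply Lemma~\ref{le:jcta2018} to the two $q$-polynomials $F(x)=\frac{1}{\bar c}\sum_{i=1}^{n-1}\alpha_i x^{q^i}=\frac{1}{\bar c}\psi_{h,t}(x)$ and $G(x)=\sum_{i=1}^{n-1}\gamma_i x^{q^i}$, which by \eqref{eq:LS_f-g_equiv_case2} satisfy $L_F=L_G$ and both have vanishing $x$-coefficient. Writing $\alpha_i$ for the coefficients of $\psi_{h,t}$, recall that its support is $\{1,t-1,t+1,2t-1\}$, so the products $\alpha_k\alpha_{n-k}^{q^k}$ vanish unless $k\in\{1,t-1,t+1,2t-1\}$, and the second family of relations in Lemma~\ref{le:jcta2018} reads $\frac{1}{\bar c^{\,1+q^k}}\alpha_k\alpha_{n-k}^{q^k}=\gamma_k\gamma_{n-k}^{q^k}$. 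In particular the choice $k=t$ forces $\gamma_t^{1+q^t}=0$, i.e. $\gamma_t=0$, while for every $k\notin\{1,t-1,t+1,2t-1\}$ we obtain $\gamma_k\gamma_{n-k}^{q^k}=0$, so at least one of $\gamma_k,\gamma_{n-k}$ vanishes.

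These relations only bound the coefficients of $\bar g$; the main work is to pin them down using the defining identity $(ax+\psi_{k,t})\circ\bar g=x$. I would expand this in the skew-polynomial formalism (with $X\colon x\mapsto x^q$, so that composition becomes multiplication with $Xc=c^qX$), where $ax+\psi_{k,t}$ corresponds to $a+X+X^{t-1}-k^{1-q^{t+1}}X^{t+1}+k^{1-q^{2t-1}}X^{2t-1}$, and then match the coefficient of each $X^m$ against $\delta_{m,0}$. Using the sparsity of $\psi_{k,t}$ together with the vanishing obtained in the first step (starting from $\gamma_t=0$), the aim is to propagate the convolution identities so as to confine the support of $\bar g$ to $\{0,1,t-1,t+1,2t-1\}$ and to solve explicitly for the five surviving coefficients $\gamma_0,\gamma_1,\gamma_{t-1},\gamma_{t+1},\gamma_{2t-1}$ as expressions in $a$, $k$ and $\bar c$. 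I expect this determination of the inverse to be the central obstacle: a priori the inverse of a sparse linearized polynomial is dense, so upgrading the weak conditions $\gamma_k\gamma_{n-k}^{q^k}=0$ to the statement that \emph{all} off-support coefficients vanish requires carefully combining the two convolution identities from each $X^m$ with the third, cubic family of relations in Lemma~\ref{le:jcta2018}.

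Once $\bar g$ is known, I would substitute the expressions for $\gamma_1,\gamma_{t-1},\gamma_{t+1},\gamma_{2t-1}$ into the surviving product relations (those for $k\in\{1,t-1,t+1,2t-1\}$) and into the cubic relations of Lemma~\ref{le:jcta2018}. This should reduce the hypothesis to a small system relating $a$, $h$, $k$ and $\bar c$. In analogy with the dichotomy already encountered in the proof of Theorem~\ref{t:code_equiv}, I expect the governing equation to reduce to a condition of the form $\xi^{q^t}+\xi=0$ for an auxiliary quantity $\xi$ built from $a$. When $t$ is even, $\xi\in\F_{q^2}$ gives $2\xi=0$, hence $\xi=0$ and then $a=0$, which is part~(a); when $t$ is odd it only gives $\xi^q=-\xi$, compatible with $a\neq 0$, and feeding this back into the relations for $\gamma_1,\dots,\gamma_{2t-1}$ should collapse to $h=k$, i.e. $\psi_{h,t}=\psi_{k,t}$ and $f=g$, which is part~(b).

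Finally, for the ``in particular'' statement I would specialise the inverse computation to $a=0$, where $\bar g=\psi_{k,t}^{-1}$. With the support of $\bar g$ confined to $\{0,1,t-1,t+1,2t-1\}$ (so that $\gamma_2=\gamma_t=\gamma_{t+2}=0$ once $t>4$), matching the coefficient of $X^1$ in $\psi_{k,t}\circ\bar g=x$ leaves only $\alpha_1\gamma_0^q=0$; since $\alpha_1=1$, this gives $\gamma_0=0$. Then \eqref{eq:d+cgamma_0=0} yields $d=-\bar c\,\gamma_0=0$, completing the proof.
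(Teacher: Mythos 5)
Your setup coincides with the paper's: you apply Lemma \ref{le:jcta2018} to the pair $\bigl(\tfrac{1}{\bar c}\psi_{h,t},\ \sum_i\gamma_i x^{q^i}\bigr)$ and combine it with the convolution identity expressing that $\bar g$ inverts $x\mapsto ax+g(x)$. But the middle of your argument --- which you yourself call ``the central obstacle'' --- is a hope, not a proof. You propose to show that \emph{all} coefficients $\gamma_j$ with $j\notin\{0,1,t-1,t+1,2t-1\}$ vanish and then to solve for the five survivors. The relations available do not give this: the second family in Lemma \ref{le:jcta2018} only yields $\gamma_j\gamma_{2t-j}^{q^j}=0$, i.e.\ \emph{at least one} member of each pair vanishes, and the cubic relations sharpen this to implications of the form $\gamma_j\neq 0\Rightarrow \gamma_{2t-j}=\gamma_{2t-j+1}=\gamma_{2t-j-1}=0$, never to the vanishing of both members of a pair. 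The paper's proof is structured precisely around this ambiguity: it never determines the support of $\bar g$. Instead, from the coefficient identities of $\bar g(ax+g(x))=x$ at the indices $j\in\{0,1,2,t-2,t-1,t,t+1,t+2,2t-2,2t-1\}$ it sets up a four-way case distinction on which of $\gamma_3,\gamma_{t+3},\gamma_{t-3},\gamma_{2t-3}$ vanish, and shows that in \emph{every} case the same single equation $\frac{1}{a^{q^t}}=-k^{q-q^{2t-1}}\frac{1}{a}$ holds; the rest of the proof runs on that equation together with the two product relations at $j=1$ and $j=t-1$. So your plan demands strictly more than the paper needs or establishes, and nothing in your sketch indicates how to obtain it.

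The end-game is also speculative. You predict a dichotomy $\xi^{q^t}+\xi=0$ with $\xi\in\F_{q^2}$ ``in analogy with'' Theorem \ref{t:code_equiv}; the actual mechanism is different and more delicate: one derives $a=k^{-q^{2t-1}}\left(1-k^{q+q^{2t-1}}\right)\eta$ with $\eta^{q^{t-2}}=-\eta$ and $\eta^{q^{t}}=-\eta$, hence $\eta^{q^2}=\eta$, which for $t\equiv 0\pmod 4$ contradicts $\eta^{q^2-1}=-1$ and for $t\equiv 2\pmod 4$ forces $\eta=0$ outright. Moreover, for $t$ odd the correct terminal condition is $(h/k)^2=1$, not $h=k$; one still concludes $f=g$ because the exponents $1-q^{t+1}$ and $1-q^{2t-1}$ are even, so $\psi_{-k,t}=\psi_{k,t}$ --- a detail your sketch glosses over. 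Only your treatment of the ``in particular'' clause is essentially sound, and even there full support confinement is unnecessary: $\gamma_t=0$, $\gamma_2=\gamma_{2t-2}=0$ and $\gamma_{t\pm 2}=0$ follow directly from Lemma \ref{le:jcta2018}, after which the coefficient of $x^{q}$ in the inverse identity gives $\gamma_0=0$ when $a=0$, and then $d=0$ by \eqref{eq:d+cgamma_0=0}, exactly as in the paper.
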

\begin{proof}
	By the second identity in Lemma \ref{le:jcta2018}, we know that 
	\begin{equation}\label{eq:second_eq}
		\gamma_j \gamma_{2t-j}=0,
	\end{equation}
	 for $j\in \{1,2,\cdots, 2t-1\} \setminus \{1, t-1, t+1, 2t-1\}$, 
	 \begin{equation}\label{eq:second_eq_1}
	 	\gamma_1\gamma_{2t-1}^q= \left( \frac{1}{\bar{c}} \right)^{q+1}\tau^q,
	\end{equation}
	and
	\begin{equation}\label{eq:second_eq_t-1}
		\gamma_{t-1} \gamma_{t+1}^{q^{t-1}}= \left( \frac{1}{\bar{c}} \right)^{1+q^{t-1}}\theta^{q^{t-1}},
	\end{equation}
	where $\tau = h^{1-q^{2t-1}}$ and $\theta=-h^{1-q^{t+1}}=h^{1+q}$.

	 By the third identity in Lemma \ref{le:jcta2018}, we have
	 \begin{equation}\label{eq:third_eq}
	 	\gamma_1 \gamma_{j-1}^q \gamma_{2t-j}^{q^j} + \gamma_{j} \gamma_{2t-1}^q \gamma_{2t-j+1}^{q^j}=0,
	 \end{equation}
	for $j\in \{2,3,\cdots, n-1\}$.
	Letting $j=2$, we obtain
	\[\gamma_1 \gamma_1^q \gamma_{2t-2}^{q^2} + \gamma_2 \gamma_{2t-1}^q \gamma_{2t-1}^{q^2}=0. \]
	As $\gamma_2\gamma_{2t-2}=0$ and $\gamma_1\gamma_{2t-1}\neq 0$, we derive $\gamma_2=\gamma_{2t-2}=0$. Similarly, by letting $j=t-1$, we have
	\[\gamma_1 \gamma_{t-2}^q \gamma_{t+1}^{q^{t-1}} + \gamma_{t-1} \gamma_{2t-1}^q \gamma_{t+2}^{q^{t-1}}=0. \]
	Since $\gamma_{t-2}\gamma_{t+2}=0$ and $\gamma_1,\gamma_{t+1}, \gamma_{t-1}, \gamma_{t+1}\neq 0$, from the above equation we deduce 
	$$\gamma_{t-2}=\gamma_{t+2}=0.$$
	Moreover, by \eqref{eq:second_eq}, \eqref{eq:third_eq} and replacing $j-1$ by $j$ in  \eqref{eq:third_eq}, 
	\begin{equation}\label{eq:gamma_jneq0}
		\gamma_j \neq 0 \Rightarrow \gamma_{2t-j}=\gamma_{2t-j+1}=\gamma_{2t-j-1}=0,
	\end{equation}
	for $j\in \{1,2,\cdots, 2t-1\} \setminus \{1, t-1, t+1, 2t-1\}$.
	
	Now, we will use the fact that $\bar{g}(ax+g(x))=x$, namely,
	\begin{equation}\label{eq:bar_g}
		 \bar{g}(ax+ x^q + x^{q^{t-1}}+u x^{q^{t+1}} +vx^{q^{2t-1}} )=x, 
	\end{equation}
	for all $x\in \F_{q^n}$, where $u=-k^{1-q^{t+1}}$ and $v=k^{1-q^{2t-1}}$. 	The coefficient of $x^{q^j}$ in the left-hand-side of \eqref{eq:bar_g} is
	\begin{equation}\label{eq:LS_f-g_equiv_case2_x^q^j}
		a^{q^j} \gamma_j+\gamma_{j-1} +\gamma_{j+t+1} + u^{q^{j+t-1}}\gamma_{j+t-1} + v^{q^{j+1}}\gamma_{j+1}. 
	\end{equation}	
	
	By letting $j=0, 1$, $2t-1$, $t+1$, $t-1$ and $t$ in \eqref{eq:LS_f-g_equiv_case2_x^q^j} and comparing it with the right-hand-side of \eqref{eq:bar_g}, we get
	\begin{align}
		\label{eq:gamma_j=0} a\gamma_0 + \gamma_{2t-1} + \gamma_{t+1} + u^{q^{t-1}}\gamma_{t-1} + v^q\gamma_{1} &=1,\\
		\label{eq:gamma_j=1}  a^q\gamma_1 + \gamma_0 &= 0,\\
		\label{eq:gamma_j=2t-1}		a^{q^{2t-1}}\gamma_{2t-1} + v\gamma_0 &= 0,\\
		\label{eq:gamma_j=t+1}  a^{q^{t+1}}\gamma_{t+1} +u\gamma_0 &= 0,\\
		\label{eq:gamma_j=t-1} a^{q^{t-1}} \gamma_{t-1} +\gamma_0 &= 0,\\
		\label{eq:gamma_j=t}  \gamma_{t-1} +\gamma_1 +u^{q^{2t-1}}\gamma_{2t-1}+v^{q^{t+1}}\gamma_{t+1} &=0.
	\end{align}
	Here we have used the result that $\gamma_t=\gamma_2=\gamma_{2t-2}=\gamma_{t+2}=\gamma_{t-2}=0$.
	
	It is clear that if $a=0$, then $\gamma_0$ must be $0$.  By \eqref{eq:d+cgamma_0=0}, $d=0$.
	
	Assume that $a\neq 0$. By \eqref{eq:gamma_j=1}, \eqref{eq:gamma_j=2t-1}, \eqref{eq:gamma_j=t+1} and \eqref{eq:gamma_j=t-1} into \eqref{eq:gamma_j=0} and \eqref{eq:gamma_j=t}, we see that $\gamma_0\neq 0$ is completely determined by $a$, and 
	\begin{equation*}
		-\gamma_0\left( \frac{1}{a^{q^{t-1}}} + \frac{1}{a^{q}} + \frac{u^{q^{2t-1}}v}{a^{q^{2t-1}}}   + \frac{v^{q^{t+1}}u}{a^{q^{t+1}}} \right)=0, 
	\end{equation*} respectively.
	Recall that $u=-k^{1-q^{t+1}}$, $v=k^{1-q^{2t-1}}$ and $k^{q^t}=-1/k$, from the above equation we deduce
	\[\frac{1}{a^{q^{t-1}}} + \frac{1}{a^{q}} + \frac{k^2}{a^{q^{2t-1}}}   + \frac{k^2}{a^{q^{t+1}}}=0. \]
	Therefore
	\begin{equation}\label{eq:gamma_j=t-further}
		 \frac{1}{a^{q^{t-1}}} + \frac{1}{a^{q}} = - k^2\left(\frac{1}{a^{q^{t-1}}}   + \frac{1}{a^{q}}\right)^{q^t}. 
	\end{equation}
	
	Our goal of the next step is to prove
	\begin{equation}\label{eq:a's_power_qt}
		\frac{1}{a^{q^t}} = -k^{q-q^{2t-1}} \frac{1}{a},
	\end{equation}
	always holds.
	
	Let $j=2, t+2, 2t-2, t-2$ in \eqref{eq:LS_f-g_equiv_case2_x^q^j}, we have
	\begin{align}
		\label{eq:gamma_j=2} \gamma_1 + \gamma_{t+3} + u^{q^{t+1}} \gamma_{t+1} + v^{q^3} \gamma_3 &= 0,\\
		\label{eq:gamma_j=t+2} \gamma_{t+1} + \gamma_{3} + u^{q} \gamma_{1} + v^{q^{t+3}} \gamma_{t+3} &= 0,\\
		\label{eq:gamma_j=2t-2} \gamma_{2t-3} + \gamma_{t-1} + u^{q^{t-3}} \gamma_{t-3} + v^{q^{2t-1}} \gamma_{2t-1} &= 0,\\
		\label{eq:gamma_j=t-2} \gamma_{t-3} + \gamma_{2t-1} + u^{q^{2t-3}} \gamma_{2t-3} + v^{q^{t-1}} \gamma_{t-1} &= 0.
	\end{align}
	
	Depending on the value of $\gamma_3$,  $\gamma_{t+3}$,  $\gamma_{t-3}$ and  $\gamma_{2t-3}$, we separate the  proof of \eqref{eq:a's_power_qt} into four different cases.
	
	\textbf{Case (i)}. $\gamma_3=\gamma_{t+3}=0$. By \eqref{eq:gamma_j=1}, \eqref{eq:gamma_j=t+1} and \eqref{eq:gamma_j=2},
	\[ -\frac{u}{a^{q^{t+1}}} \gamma_0=\gamma_{t+1}=-u^q\gamma_1 =\frac{u^q}{a^q} \gamma_0, \]
	which means $\frac{1}{a^{q^t}} = -u^{1-q^{2t-1}} \frac{1}{a} = -k^{q-q^{2t-1}} \frac{1}{a}$.
	
	\textbf{Case (ii)}. $\gamma_{t-3}=\gamma_{2t-3}=0$. By a similar computation of \eqref{eq:gamma_j=2t-2} as in Case (i), we get  \eqref{eq:a's_power_qt} again.
	
	\textbf{Case (iii)}.	$\gamma_3\neq 0$ and $\gamma_{t-3}\neq  0$.  By \eqref{eq:gamma_jneq0}, $\gamma_{2t-3}=\gamma_{2t-4}=\gamma_{t+3}=\gamma_{t+4}=0$. Now, \eqref{eq:gamma_j=2} and \eqref{eq:gamma_j=t+2} become
	\begin{align*}
		\gamma_1 + k^{-q^2-q}\gamma_{t+1} + k^{q^3-q^2} \gamma_3 &=0,\\
		\gamma_3 + \gamma_{t+1} + k^{q^2+q} \gamma_1 &=0.
	\end{align*}

	Canceling $\gamma_3$, we get
	\[(1-k^{q^3+q}) \gamma_{t+1} = - (1-k^{q^3+q})k^{q^2+q}\gamma_1. \]
	By Proposition \ref{p:hcondition}, $k^{q^2+1}\neq 1$. Hence, $\gamma_{t+1} = - k^{q^2+q}\gamma_1$. By plugging \eqref{eq:gamma_j=1} and \eqref{eq:gamma_j=t+1}  into it, we derive \eqref{eq:a's_power_qt}.
	
	\textbf{Case (iv)}.	$\gamma_{t+3}\neq 0$ and $\gamma_{2t-3}\neq  0$. By \eqref{eq:gamma_jneq0}, $\gamma_{t-3}=\gamma_{t-4}=\gamma_{3}=\gamma_{4}=0$. As in Case (iii), by canceling $\gamma_{t+3}$ using \eqref{eq:gamma_j=2} and  \eqref{eq:gamma_j=t+2}, we obtain \eqref{eq:a's_power_qt} again.
	
	By \eqref{eq:gamma_jneq0}, we have covered all possible cases. Therefore,  \eqref{eq:a's_power_qt} is proved.
	
	Now we are ready to prove (a) and (b).  Our strategy is to give a precise expression for $a$, which is strong enough to prove (a). Then we further use \eqref{eq:second_eq_1} to get more restrictions on the value of $h$ which leads to (b).
	
	Plugging \eqref{eq:a's_power_qt} in \eqref{eq:gamma_j=t-further}, we have
	\[   \frac{1}{a^{q^{t-1}}} \left(1-k^{1+q^{2t-2}}\right)  + \frac{1}{a^q}\left(1-k^{1+q^2}\right)  =0,\] 
	that is 
	\[   \left( \frac{1}{a^q}\left(1-k^{1+q^2}\right) \right)^{q^{t-2}} -k^{-1-q^{2t-2}} \frac{1}{a^q}\left(1-k^{1+q^2}\right)=0.\] 
	By $k^{q^{t-2}} = -k^{-q^{2t-2}}$, we have
	\[ \left( \frac{k^{-1}}{a^q}\left(1-k^{1+q^2}\right) \right)^{q^{t-2}} +\frac{k^{-1}}{a^q}\left(1-k^{1+q^2}\right)=0. \]
	Therefore, 
	\begin{equation}\label{eq:c_value}
		a=k^{-q^{2t-1}} \left(1-k^{q+q^{2t-1}}\right)\eta, 
	\end{equation}
	where $\eta$ satisfies $\eta^{q^{t-2}}+\eta = 0$. Since $0=(\eta^{q^{t-2}}+\eta)^{q^{t+2}} = \eta+(\eta^{q^{t-2}})^{q^4}=\eta-\eta^{q^4}$, we get $\eta\in \F_{q^4}$. Moreover,
	\[
	\eta = -\eta^{q^{t-2}}=
	\begin{cases}
		-\eta^{q^3}=-\eta^q, & t\equiv 1\pmod{4},\\
		-\eta=0, & t\equiv 2\pmod{4},\\
		-\eta^q, & t\equiv 3\pmod{4},\\
		-\eta^{q^2}, & t\equiv 0\pmod{4}.
	\end{cases}
	\]
	Hence
	\begin{equation}\label{eq:eta^q2-1}
		\eta^{q^2-1}=
		\begin{cases}
		1, & t\equiv 1\pmod{4},\\
		0, & t\equiv 2\pmod{4},\\
		1, & t\equiv 3\pmod{4},\\
		-1, & t\equiv 0\pmod{4}.
		\end{cases}
	\end{equation}
	
	Substitute $a$ in \eqref{eq:a's_power_qt} by \eqref{eq:c_value}, 
	\[
	-k^{q-q^{2t-1}}k^{-q^{t-1}}(1-k^{q^{t+1}+q^{t-1}}) \eta^{q^{t}} = k^{-q^{2t-1}}(1-k^{q+q^{2t-1}})\eta,
	\]
	which equals
	\[
		k^{q}(1-k^{-q-q^{2t-1}}) \eta^{q^{t}} = k^{-q^{2t-1}}(1-k^{q+q^{2t-1}})\eta.
	\]
	It implies 
	\begin{equation}\label{eq:eta}
		\eta^{q^t} = -\eta.
	\end{equation} Together with $\eta^{q^{t-2}}+\eta = 0$, we deduce $\eta^{q^2} =\eta$. It contradicts \eqref{eq:eta^q2-1} when $t\equiv 0\pmod{4}$. Therefore, when $t$ is even, $a$ must be $0$ and (a) is proved.

	Finally, let us plugging \eqref{eq:gamma_j=1} and \eqref{eq:gamma_j=2t-1} into \eqref{eq:second_eq_1}, we have 
	\begin{equation}\label{eq:gamma_1_bara}
	( \bar{c} \gamma_0)^{q+1}=\frac{\tau^q}{v^q} a^{q+1}=\ell^{q-1}a^{q+1},
	\end{equation}
	where $\ell=h/k$ which means $\ell^{q^t}=1/\ell$.
	
	Similarly, by \eqref{eq:gamma_j=t-1} and \eqref{eq:gamma_j=t+1} into \eqref{eq:second_eq_t-1}
	\begin{equation}\label{eq:gamma_t-1_bara}
	( \bar{c} \gamma_0)^{q^{t-1}+1}=\frac{\theta^{q^{t-1}}}{u^{q^t-1}} a^{q^{t-1}+1}=\ell^{q^{t-1}-1}a^{q^{t-1}+1}.
	\end{equation}
	Raising \eqref{eq:gamma_1_bara} to its $\frac{q^{t-1}+1}{2}$-th power and  \eqref{eq:gamma_t-1_bara} to its $\frac{q+1}{2}$-th power and canceling $(\bar{c}\gamma_0)^{\frac{1}{2}(q+1)(q^{t-1}+1)}$ and $a^{\frac{1}{2}(q+1)(q^{t-1}+1)}$, we obtain
	\[ \ell^{q^{t-2}-1}=1.   \]
	Again, by $\ell^{q^t+1}=1$, we have $\ell^{q^2+1}=1$ which means $\ell^{\gcd(q^2+1, q^t+1)}=1$. Therefore, since $t$ is odd, we get $\ell^2=1$, and (b) is proved.	
\end{proof}

Now we are ready to prove Theorem \ref{t:PGaL-equivalence}.
\begin{proof}[Proof of Theorem \ref{t:PGaL-equivalence}]
	Suppose that $f=\psi_{h,t}$, $g=\psi_{k,t}$, and $L_f$ is $\PGaL(2,q^n)$-equivalent to $L_g$. Then there exists an invertible matrix 
	$\begin{pmatrix}
	a & b\\
	c & d
	\end{pmatrix}$ over $\F_{q^n}$ and $\sigma\in \Aut(\F_{q^n})$ such that
	\begin{equation*}
	\left\{\frac{f(x)}{x}:x\in \F_{q^n}^*  \right\} =\left\{\frac{cx^\sigma+d(g(x))^\sigma}{ax^\sigma+b(g(x))^\sigma}:x\in \F_{q^n}^*  \right\} =\left\{\frac{cx+d\bar{g}(x)}{ax+b\bar{g}(x)}:x\in \F_{q^n}^*  \right\},
	\end{equation*} 
	where $\bar{g}(x)=\psi_{k^\sigma, t}(x)$.
	
	For a given $h$ satisfying $h^{q^t+1}=-1$, let $\varepsilon_h$ denote the number of $k$ for which $U_k$ is $\GaL(2,q^n)$-equivalent to $U_h$ .
	
	Depending on the value of $b$, we separate the remainder part of the proof into two cases:
	
	\textbf{Case (a).} $b=0$. By Lemma \ref{le:LS_f-g_equiv_case1}, $\left(h/k^\sigma\right)^{q^2+1}=1$ which means that $U_h$ is $\GaL(2,q^n)$-equivalent to $U_{k^\sigma}$. Thus there are exactly $\varepsilon_h$ choices of $k$ for which $L_{k,t}$ is equivalent to $L_{h,t}$.
	
	\textbf{Case (b).} $b\neq 0$. Without loss of generality, we assume $b=1$ and $f\neq \bar{g}$. By Lemma \ref{le:LS_f-g_equiv_case2}, $d=a=0$. Thus
	\[
		\left\{\frac{f(x)}{x}:x\in \F_{q^n}^*  \right\}  =\left\{\frac{cx}{\bar{g}(x)}:x\in \F_{q^n}^*  \right\}.
	\]
	Suppose that there is another $\tilde{g}(x)=\psi_{\tilde{k},t}(x)$ for certain $\tilde{k}\in \F_{q^{2t}}$ satisfying $\tilde{k}^{q^t+1}=-1$ and 
	\[
		\left\{\frac{f(x)}{x}:x\in \F_{q^n}^*  \right\}  =\left\{\frac{\tilde{c}x}{\tilde{g}(x)}:x\in \F_{q^n}^*  \right\}
	\]
	for some $\tilde{c}\in \F_{q^{2t}}$. Then
		\[
	\left\{ \frac{\bar{g}(x)}{x}:x\in \F_{q^n}^*  \right\}  =\left\{ \frac{c}{\tilde{c}} \cdot \frac{\tilde{g}(x)}{x} :x\in \F_{q^n}^*  \right\}.
	\]
	By Lemma \ref{le:LS_f-g_equiv_case1}, $(\tilde{k}/k^\sigma)^{q^2+1}=1$ which means that $U_{k^\sigma}$ is $\GaL(2,q^n)$-equivalent to $U_{\tilde{k}}$.  
	Hence, for the case $b\neq0$, there are exactly $\varepsilon_{\tilde{k}}$ choices of $k$ for which $L_{k,t}$ is equivalent to $L_{h,t}$.
	
	Finally we combine Case (a) and Case (b), for a given $L_{h,t}$. Noting that $|\{ h\in \F_{q^n}: h^{q^t+1}=-1 \} |=q^t+1$, there are exactly 
		\[M=\frac{q^t+1}{\varepsilon_h}+\frac{q^t+1}{\varepsilon_{\tilde{k}}}
		\]
	inequivalent $L_{h,t}$ defined by \eqref{familyofL}.  Recall that 
	\[\varepsilon_h\leq n r\xi_h=
		\begin{cases}
			4r t, & t\not\equiv 2\pmod{4};\\
			 2(q^2+1)rt , &t\equiv 2\pmod{4},
		\end{cases}
		\]
		for every possible choice of $h$; see the proof of Corollary \ref{coro:BIG}. We obtain the lower bound of $M$.
\end{proof}
\begin{remark}
	By Theorem \ref{t:PGaL-equivalence}, Family \eqref{familyofL} contains much more inequivalent elements compared with the known constructions for infinitely many $n$ listed in Table \ref{table:numbers}. Therefore, this family must be new.
\end{remark}
	\begin{remark}
		For $t=3$ and $4$, we do not have any result about the equivalence problems among the linear sets defined by different $\psi_{h,t}$'s. The equivalence for the associated MRD codes are also not completely known. One of the reasons is that the comparing of coefficients of equations, such as \eqref{t:code_equiv}, becomes much more involved. We leave them as open questions.
	\end{remark}

\section*{Acknowledgments}
The research that led to the present paper was partially supported by a grant of the group GNSAGA of INdAM.
Yue Zhou is partially supported by Natural Science	Foundation of Hunan Province (No.\ 2019JJ30030) and Training Program for Excellent Young  Innovators of Changsha (No.\ kq1905052).


\begin{thebibliography}{10}
	
	\bibitem{bartoli_maximum_2018}
	D.~Bartoli, M.~Giulietti, G.~Marino, and O.~Polverino.
	\newblock Maximum scattered linear sets and complete caps in {Galois} spaces.
	\newblock {\em Combinatorica}, 38(2):255--278, 2018.
	
	\bibitem{bartoli_classification_2021}
	D.~Bartoli and M.~Montanucci.
	\newblock On the classification of exceptional scattered polynomials.
	\newblock {\em Journal of Combinatorial Theory, Series A}, 179:105386,
	2021.
	
	\bibitem{bartoli_new_maximum_2020}
	D.~Bartoli, C.~Zanella, and F.~Zullo.
	\newblock A new family of maximum scattered linear sets in $\mathrm{PG}(1,
	q^6)$.
	\newblock {\em Ars Mathematica Contemporanea}, 19(1):125--145, 2020.
	
	\bibitem{bartoli_scattered_2018}
	D.~Bartoli and Y.~Zhou.
	\newblock Exceptional scattered polynomials.
	\newblock {\em Journal of Algebra}, 509:507 -- 534, 2018.
	
	\bibitem{blokhuis_scattered_2000}
	A.~Blokhuis and M.~Lavrauw.
	\newblock Scattered spaces with respect to a spread in $\mathrm{PG}(n,q)$.
	\newblock {\em Geometriae Dedicata}, 81(1):231--243, 2000.
	
	\bibitem{couvreur_hardness_arxiv}
	A.~Couvreur, T.~Debris-Alazard, and P.~Gaborit.
	\newblock On the hardness of code equivalence problems in rank metric.
	\newblock {\em arXiv:2011.04611 [cs, math]}, 2020.
	\newblock arXiv: 2011.04611.
	
	\bibitem{csajbok_classes_2018}
	B.~Csajb{\'o}k, G.~Marino, and O.~Polverino.
	\newblock Classes and equivalence of linear sets in $\mathrm{PG}(1,q^n)$.
	\newblock {\em Journal of Combinatorial Theory, Series A}, 157:402 -- 426,
	2018.
	
	\bibitem{csajbok_newMRD_2018}
	B.~Csajb{\'o}k, G.~Marino, O.~Polverino, and C.~Zanella.
	\newblock A new family of {MRD}-codes.
	\newblock {\em Linear Algebra and its Applications}, 548:203--220, 2018.
	
	\bibitem{csajbok_new_maximum_2018}
	B.~Csajb\'ok, G.~Marino, and F.~Zullo.
	\newblock New maximum scattered linear sets of the projective line.
	\newblock {\em Finite Fields and Their Applications}, 54:133 -- 150, 2018.
	
	\bibitem{csajbok_equivalence_2016}
	B.~Csajb\'ok and C.~Zanella.
	\newblock On the equivalence of linear sets.
	\newblock {\em Designs, Codes and Cryptography}, 81(2):269--281, 2016.


	\bibitem{Delsarte}
	P.~Delsarte.
	\newblock Bilinear forms over the finite field, with applications to codig theory, \
	\newblock \textit{J. Comb. Theory Ser. A}, 25: 226--241, 1978

	\bibitem{huppert_endliche_1967}
	B.~Huppert.
	\newblock {\em Endliche {Gruppen} {I}}.
	\newblock Springer-Verlag, Berlin, Heidelberg, Jan. 1967.

	\bibitem{LavPol}
	M.~Lavrauw and O.~Polverino. 
	\newblock Finite semifields and Galois Geometry, Chapter in: J. De Beule, L. Storme (eds.),  
	\newblock \textit{ Current Research Topics
	in Galois Geometry}, NOVA Academic Publishers, 2011. 
	
	\bibitem{lavrauw_field_reduction_2015}
	M.~Lavrauw and G.~Van~de Voorde.
	\newblock Field reduction and linear sets in finite geometry.
	\newblock In G.~Kyureghyan, G.~Mullen, and A.~Pott, editors, {\em Contemporary
		Mathematics}, volume 632, pages 271--293. American Mathematical Society,
	2015.
	
	\bibitem{liebhold_automorphism_2016}
	D.~Liebhold and G.~Nebe.
	\newblock Automorphism groups of {Gabidulin}-like codes.
	\newblock {\em Archiv der Mathematik}, 107(4):355--366, 2016.
	
	\bibitem{longobardi_familyOfLinearMRD_2021}
	G.~Longobardi and C.~Zanella.
	\newblock Linear sets and MRD-codes arising from a class of scattered linearized polynomials.  
	\newblock {\em J Algebr Comb.}, 2021, https://doi.org/10.1007/s10801-020-01011-9.
	
	

	\bibitem{lunardon_normal_1999}
	G.~Lunardon.
	\newblock Normal {Spreads}.
	\newblock {\em Geometriae Dedicata}, 75(3):245--261, 1999.
	
	\bibitem{lunardon_blocking_2001}
	G.~Lunardon and O.~Polverino.
	\newblock Blocking sets and derivable partial spreads.
	\newblock {\em Journal of Algebraic Combinatorics}, 14(1):49--56, 2001.
	
	\bibitem{lunardon_kernels_2017}
	G.~Lunardon, R.~Trombetti, and Y.~Zhou.
	\newblock On kernels and nuclei of rank metric codes.
	\newblock {\em Journal of Algebraic Combinatorics}, 46(2):313--340, 2017.
	
	\bibitem{lunardon_generalized_2018}
	G.~Lunardon, R.~Trombetti, and Y.~Zhou.
	\newblock Generalized twisted gabidulin codes.
	\newblock {\em Journal of Combinatorial Theory, Series A}, 159:79 -- 106, 2018.
	
	\bibitem{marino_mrd-codes_2020}
	G.~Marino, M.~Montanucci, and F.~Zullo.
	\newblock {MRD}-codes arising from the trinomial $x^q+x^{q^3}+cx^{q^5}\in
	\mathbb{F}_{q^6}[x]$.
	\newblock {\em Linear Algebra and its Applications}, 591:99--114, 2020.
	
	\bibitem{polverino_linear_2010}
	O.~Polverino.
	\newblock Linear sets in finite projective spaces.
	\newblock {\em Discrete Mathematics}, (22):3096--3107, 2010.
	
	\bibitem{polverino_connections_2020}
	O.~Polverino and F.~Zullo.
	\newblock Connections between scattered linear sets and MRD-codes.
	\newblock {\em Bulletin of the {ICA}}, 89, 2020.
	
	\bibitem{sheekey_new_2016}
	J.~Sheekey.
	\newblock A new family of linear maximum rank distance codes.
	\newblock {\em Advances in Mathematics of Communications}, 10(3):475--488,
	2016.
	
	\bibitem{sheekey_MRD_survey_arxiv}
	J.~Sheekey.
	\newblock {MRD} codes: Constructions and connections.
	\newblock In K.-U. Schmidt and A.~Winterhof, editors, {\em Combinatorics and
		Finite Fields. Difference Sets, Polynomials, Pseudorandomness and
		Applications.}, pages 255--286. De Gruyter, 2019.
	
	\bibitem{zanella_vertex_2020}
	C.~Zanella and F.~Zullo.
	\newblock Vertex properties of maximum scattered linear sets of
	$\mathrm{PG}(1,q^n)$.
	\newblock {\em Discrete Mathematics}, 343(5):111800, 2020.
	
\end{thebibliography}
\end{document}